\definecolor{MyDarkBlue}{cmyk}{0.8,0.3,0.8,0.4}
\definecolor{yellow}{rgb}{0.99,0.99,0.70}
\definecolor{white}{rgb}{1.0,1.0,1.0}
\definecolor{black}{rgb}{0.00,0.00,0.00}
\numberwithin{equation}{section}
\newcommand{\be}{\begin{eqnarray}}
\newcommand{\ee}{\end{eqnarray}}
\newcommand{\ce}{\begin{eqnarray*}}
\newcommand{\de}{\end{eqnarray*}}
\newtheorem{theorem}{Theorem}[section]
\newtheorem{lemma}{Lemma}[section]
\newtheorem{remark}{Remark}[section]
\newtheorem{definition}{Definition}[section]
\newtheorem{example}{Example}[section]
\newtheorem{corollary}{Corollary}[section]
\newtheorem{hypothesis}{Hypothesis}[section]
\def\[{{\Big[}}
\def\]{{\Big]}}
\def\<{{\langle}}
\def\>{{\rangle}}
\def\({{\big(}}
\def\){{\big)}}
\def\bb2{{\boldsymbol{2}}}
\def\={&\!\!=\!\!&}
\def\b1{{\mathbbm 1}}
\def\geq{\geqslant}
\def\leq{\leqslant}
\def\le{\leqslant}
\def\[{{\Big[}}
\def\]{{\Big]}}
\def\<{{\langle}}
\def\>{{\rangle}}
\def\={&\!\!=\!\!&}
\def\bt{\begin{theorem}}
\def\et{\end{theorem}}
\def\bl{\begin{lemma}}
\def\el{\end{lemma}}
\def\br{\begin{remark}}
\def\er{\end{remark}}
\def\bd{\begin{definition}}
\def\ed{\end{definition}}
\def\bc{\begin{corollary}}
\def\ec{\end{corollary}}
\def\geq{\geqslant}
\def\leq{\leqslant}
\def\le{\leqslant}
\def\<{\langle} \def\>{\rangle}
\begin{document}

\title[LDP for Multi-Scale SPDEs]
{Large Deviation Principle for Multi-Scale Fully Local Monotone Stochastic Dynamical Systems with Multiplicative Noise$^\dagger$}

\thanks{$\dagger$
 This work is supported by NSFC (No.~12171208, 12090011, 12090010) 
 }

\maketitle
\centerline{\scshape Wei Hong,  Wei Liu\footnote{Corresponding author: weiliu@jsnu.edu.cn},  Luhan Yang }
\medskip

\centerline{  School of Mathematics and Statistics, Jiangsu Normal University, Xuzhou 221116, China}

\begin{abstract}
This paper is devoted to proving the small noise asymptotic behaviour, particularly large deviation principle, for multi-scale stochastic dynamical systems with fully local monotone coefficients driven by multiplicative noise. The main techniques are based on a combination of the weak convergence approach, the time discretization technique and the theory of pseudo-monotone operator. The main results derived in this paper have broad applicability to various multi-scale models, where the slow component could be such as stochastic porous medium equations, stochastic Cahn-Hilliard equations and stochastic 2D Liquid crystal equations.

\bigskip
\noindent
\textbf{Keywords}:
SPDE, multi-scale, large deviation principle, pseudo-monotone operator, weak convergence approach\\

\end{abstract}
\maketitle \rm

\tableofcontents

\section{Introduction}
Multi-scale models involving slow and fast components are widely applied in various fields, including material sciences, chemistry and climate dynamics. For instance,  consider  the following simple multi-scale stochastic  differential equations (SDEs)
\begin{equation*}
d X_{t}^{\delta}=b(X_{t}^{\delta}, Y_{t / \delta}) d t+d W_{t}, X_{0}^\delta=x,
\end{equation*}
where the parameter $0<\delta\ll 1$ and $Y_{t}$ is an ergodic Markov process. For this reason, $X_{t}^{\delta}$ is referred as  the slow component, which models a phenomenon occurring at the natural time scale. On the other hand, $Y_{t / \delta}$ is referred as the fast component (with ergodic measure).
  In many cases, chemical reaction networks exhibit dynamics that occur on significantly different time scales, ranging from nanoseconds ($10^{-9}$s) to several days. For further background, we refer the reader to \cite{BR,EE,K,MHR}
and references therein. In fact, numerous physical systems possess a hierarchical structure, whereby components evolve at different rates. This implies that while some components change rapidly, others change very slowly.

Notably, multi-scale models frequently appear  in many real-world dynamical systems, in addition, considering multi-scale models with small random noise perturbations has practical significance. However,  it should be noted that studying such models has theoretical challenges due to the interaction between different time scales.
For this purpose, our aim is to investigate the small noise asymptotic behavior and establish the
large deviation principle (LDP) for a general class of multi-scale stochastic partial differential equations (SPDEs).
Specifically, we consider the following multi-scale SPDEs
\begin{eqnarray}\label{aimequation}
\left\{ \begin{aligned}
  &dX_{t}^{\varepsilon,\delta}=[\mathfrak{A}_{1}(X_{t}^{\varepsilon,\delta})+\mathfrak{f}(X_{t}^{\varepsilon,\delta}, Y_{t}^{\varepsilon,\delta})] d t+\sqrt{\varepsilon}\mathfrak{B}_{1}(X_{t}^{\varepsilon,\delta}) d W_{t}^{1}, \\
  &d Y_{t}^{\varepsilon,\delta}=\frac{1}{\delta} \mathfrak{A}_{2}(X_{t}^{\varepsilon,\delta}, Y_{t}^{\varepsilon,\delta}) d t+\frac{1}{\sqrt{\delta}} \mathfrak{B}_{2}(X_{t}^{\varepsilon,\delta}, Y_{t}^{\varepsilon,\delta}) d W_{t}^{2}, \\
  &X_{0}^{\varepsilon,\delta}=x\in \mathbb{H}_1,~~Y_{0}^{\varepsilon,\delta}=y\in \mathbb{H}_2,
\end{aligned} \right.
\end{eqnarray}
where $W_{t}^{i}$, $i=1,2$, denote  independent cylindrical Wiener processes defined on a complete filtered probability space $(\Omega, \mathscr{F},\{\mathscr{F}_{t}\}_{t\geq0}, \mathbb{P}).$ The coefficients $\mathfrak{A}_1,\mathfrak{f},\mathfrak{B}_1,\mathfrak{A}_2,\mathfrak{B}_2$ satisfy some appropriate conditions, $\varepsilon,\delta$ are small positive parameters where $\delta$ describes the ratio of the time scale between the
slow component $X_{t}^{\varepsilon,\delta}$ and the
fast component $Y_{t}^{\varepsilon,\delta}$, and $\varepsilon$ describes the intensity of the noise in slow component.

The LDP  is a powerful tool for investigating the asymptotic behavior  of probability distributions in the tails. It  has widespread applications in different fields, including information theory, thermodynamics, statistics and engineering. For a comprehensive understanding of LDP theory and its important applications, we refer readers to the classical monographs \cite{dembo2011large,varadhan1984large}. In the context of small noise LDP, also known as the Freidlin-Wentzell  LDP in the literature, for classical SDEs is established by Freidlin and Wentzell in the seminal  work \cite{FW}. Subsequently, Freidlin further studied the LDP for the small noise limit of stochastic reaction-diffusion equations in \cite{F}.
Over the past several decades, the small noise LDP has been extensively developed. For detailed investigations and applications, interested readers can refer to the  monographs \cite{A,dembo2011large,S1984,varadhan1984large}.

In the present paper, we are interested in the LDP for multi-scale models, which also has a long and rich history. For instance, Liptser \cite{L} formulated the LDP for diffusion pair with small diffusion parameters, using techniques such as the exponential tightness and the Puhalskii's theorem. In the work \cite{pukhalskii2016}, Pukhalskii obtained the LDP for the joint distribution of the slow process and of the empirical process of the fast variable. For the LDP in the context of infinite-dimensional multi-scale models, to the best of our knowledge, Wang et al.~\cite{WRD} firstly studied the LDP for stochastic reaction-diffusion equations involving slow-fast components with additive Gaussian perturbations by applying the contraction principle and certain approximations. This work was  further extended in \cite{HS} to a more general model  employing the functional occupation measure approach. Sun et al. \cite{SWXY} used the weak convergence method and the time discretization approach to establish the Freidlin-Wentzell LDP
for  stochastic Burgers equations with  fast oscillations, see also \cite{MO} for  the two-time-scale stochastic convective Brinkman-Forchheimer equations. More recently, in \cite{HLL}, the authors considered a general multi-scale system, which includes a wide class of quasilinear and semilinear SPDEs, within the locally monotone framework. However, in \cite{HLL}, only the case where the fast component is driven by additive noise was explored, owing to difficulties arising from locally monotone coefficients.

Therefore, our objective  is to establish a general and unified result for investigating the Freidlin-Wentzell LDP for a boarder class of infinite-dimensional dynamical systems with fast oscillations in which the coefficients in the slow component are assumed to satisfy a fully local monotonicity condition (see (\ref{conful}) below), while the fast component is driven by general multiplicative noise. We recall that Liu \cite{Liu2011} (see also \cite{liu2015stochastic}) studied SPDEs with additive noise and introduced a type of local monotonicity condition as follows,
\begin{equation}\label{conful}
_{\mathbb{V}^{*}}\left\langle \mathfrak{A}_{1}(u)-\mathfrak{A}_{1}(v), u-v\right\rangle_{\mathbb{V}}
\leq(C+\rho(u)+\eta(v))\|u-v\|_{\mathbb{H}}^{2},
\end{equation}
where $\rho,\eta$ are some measurable functions. Recently, R\"{o}ckner et al.~\cite{RSZ} extended the results from \cite{Liu2011} to SPDEs driven by multiplicative noise with fully local monotone coefficients (\ref{conful}). In their work, the theory of pseudo-monotone operator, initially introduced by Br\'{e}zis \cite{BH}, plays a crucial role. For further investigations  on the theory of pseudo-monotone operator, we refer to \cite{LJL,VZ1,VZ2,Z}
and references therein.

In this paper, the main techniques used to study the LDP of (\ref{aimequation}) are  a combination of the well-known weak convergence approach developed in \cite{budhiraja2000variational}, the time discretization scheme, and the theory of pseudo-monotone operator. More precisely, the LDP is derived by  demonstrating the weak convergence of solutions to the controlled slow model (see (\ref{controlequation}) below) towards its deterministic averaged limit. To achieve this, we firstly utilize the theory of pseudo-monotone operator to establish the well-posedness of the deterministic limit and prove the compactness of the rate function. Secondly, the main technical challenges in proving the weak convergence of the controlled slow model are addressed by introducing a time discretization scheme and proving that
the approximation converges at an appropriate rate. Moreover, it worth emphasizing that  in order to deal with the case where the fast component is driven by multiplicative noise, we work on improving a series of a priori estimates for solutions to the controlled system (\ref{controlequation}). These estimates, presented in Lemmas \ref{controlestimate}, \ref{condiff} and \ref{FastAuxiliaryDiffLemma}, are more general compared to previous works such as \cite{HS,HLL,MO,SWXY}.

On the other hand, we point out that  the parameters $\delta$ and $\varepsilon$ here are needed to satisfy the following condition
 $$\lim_{\varepsilon\to 0} \delta/\varepsilon = 0.$$
Notice that, depending on the order in which $\delta$ and $\varepsilon$ converge to zero, we have following three types of regimes
\begin{equation*}
\lim_{\varepsilon\rightarrow0}\delta/\varepsilon=\left\{
  \begin{array}{ll}
    0, & \hbox{Regime 1;} \\
    \gamma, & \hbox{Regime 2;} \\
    \infty, & \hbox{Regime 3.}
  \end{array}
\right.
\end{equation*}
For Regimes 2 and  3, the LDP becomes  considerably more complex, which remains the open problem in the literature, and our current approach is no longer directly applicable. In fact, in the case of infinite dimensions, the convergence of the underlying control problem is much challenging. In brief,  in Regimes 2 and 3, the invariant measure $\nu$ of the parameterized equation associated with the fast component (see (\ref{frozenequation}) in Appendix) will depend on the control function  introduced in the weak convergence approach. This dependence  makes the analysis very complicated and in particular we do not know whether $\nu$ is an invariant probability measure of a certain process or not.

Unlike previous studies, the drift term $\mathfrak{A}_1$ and the interaction term $\mathfrak{A}_2$ are not assumed to be globally Lipschitz continuous. Instead,  $\mathfrak{A}_1$ only satisfies a  fully local monotonicity condition, and  $\mathfrak{A}_2$ satisfies a monotonicity condition. Moreover, most previous investigations in the SPDE setting used specific properties of mild solutions and thus cannot cover the case of quasilinear SPDEs such as porous medium equations and $p$-Laplace equations. One novelty of this work is to show a much more general framework to handle a wide range of multi-scale examples,  including Cahn-Hilliard equations, 2D Liquid crystal equations and tamed 3D Navier-Stokes equations with fast oscillations. These examples cannot be covered by the results of  \cite{HLL} or other related works.

The rest of the paper is organized as follows. In section 2, we first sketch the
theory of the LDP and state the powerful weak convergence criterions, then we give the hypotheses on the coefficients and introduce the main results. In section 3, we derive  some necessary a priori estimates to the controlled equation. In section 4, we prove the weak convergence criterions (hence the LDP) hold for (\ref{aimequation}). In section 5, some concrete multi-scale models are given to illustrate the applicability of our main results. In section 6, some lemmas on the ergodicity of the fast process and the well-posedness of the skeleton equation are given.

Here are some conventions. Throughout this paper, $C$ denotes a generic positive
constant whose value may change from line to line. The dependence of constants on parameters if needed will be
indicated, e.g. $C_T.$
\section{Large deviation principle}
\setcounter{equation}{0}
 \setcounter{definition}{0}

In this section, we begin by introducing some basic definitions related to the LDP and present the powerful weak convergence approach used in this work. Next, we formulate the assumptions on the model considered in the current work and show the main results.

\subsection{Weak convergence analysis}
In this subsection, we explore the relationship between the LDP and the Laplace principle, and recall sufficient conditions for the Laplace principle.

Consider a family of random variables $\{X^\varepsilon\}$  defined on a probability space $(\Omega,\mathscr{F},\mathbb{P})$ and taking values in a Polish space $\mathscr{E}$. The LDP focuses on characterizing the asymptotic behavior of rare events, where the probability $\mathbb{P}(X^{\varepsilon}\in A)$ decays exponentially as $\varepsilon\to 0$. The rate of this exponential decay is determined by a function known as the ``rate function''.

\begin{definition} Let $\mathscr{E}$ be a Polish space. A function $I: \mathscr{E}\to [0,+\infty]$ is called
a rate function if $I$ is lower semi-continuous. Moreover, a rate function $I$
is called a {\it good rate function} if  the level set $\{x\in \mathscr{E}: I(x)\le
K\}$ is compact for any constant $K>0.$
\end{definition}

\begin{definition}  $\{X^\varepsilon\}$ is said to satisfy the LDP with rate function $I$ if for each Borel subset $B$ in $\mathscr{E}$,
\begin{equation*}
-\inf_{x\in B^o} I(x)\leq \liminf_{\varepsilon\to0}\varepsilon\log \mathbb P(X^\varepsilon\in B)\leq\limsup_{\varepsilon\to0}\varepsilon\log \mathbb P(X^\varepsilon\in B)\leq-\inf_{x\in\bar{B}} I(x).
\end{equation*}
where $B^o$ and $\bar{B}$ are the inner and closure of set $B$, respectively.
\end{definition}

As pointed in \cite{pukhalskii1994theory}, if $\mathscr{E}$ is a Polish space and $I$ is a good rate function, the LDP is equivalent to the following Laplace principle.


\begin{definition}\label{Laplaceprinciple}
If for each bounded continuous real-valued function $h$ defined on $\mathscr{E}$, we have
$$\lim_{\varepsilon\to 0}\varepsilon \log \mathbb{E}\left\lbrace
 \exp\left[-\frac{1}{\varepsilon} h(X^{\varepsilon})\right]\right\rbrace
= -\inf_{x\in \mathscr{E}}\left\{h(x)+I(x)\right\},$$
then $\{X^\varepsilon\}$ is said to satisfy the Laplace principle on $\mathscr{E}$ with a rate function $I$.
\end{definition}

Next, we introduce sufficient conditions for a sequence of Wiener functionals to satisfy the Laplace principle. Let $(\mathscr{U},\langle\cdot,\cdot\rangle_{\mathscr{U}})$ be a Hilbert space. Let $W$ be an $\mathscr{U}$-valued cylindrical Wiener process, where there is another Hilbert space $\mathscr{U}_0$ such that the embedding $\mathscr{U}\subset\mathscr{U}_0$ is Hilbert-Schmidt and the path of $W$ take values in $C([0,T];\mathscr{U}_0)$, defined on
complete  filtered probability space $(\Omega, \mathscr{F},\{\mathscr{F}_{t}\}_{t\geq0}, \mathbb{P})$. Consider a collection of control functions
$$
\mathcal A :=\Bigg\{\phi:\phi ~\text{is  $\mathscr{U}$-valued
 $(\mathscr{F}_t)$-predictable process and} \displaystyle\int_0^T\|\phi_s(\omega)\|_\mathscr{U}^2ds<\infty, \mathbb P\text{-a.s.}\Bigg\}.
$$
For any $M > 0$, we also introduce the following sets
\begin{eqnarray*}
\!\!\!\!\!\!\!\!&&S_M:=\Bigg\{\phi\in L^2([0,T];\mathscr{U}):\int_0^T\|\phi_s\|_\mathscr{U}^2ds\leq M\Bigg\},
\nonumber \\
\!\!\!\!\!\!\!\!&&\mathcal A_M:=\Big\{\phi\in \mathcal A:\phi_\cdot(\omega)\in S_M, \mathbb P\text{-a.s.}\Big\},
\end{eqnarray*}
where $S_M$ equipped with the weak topology in $L^2([0,T];\mathscr{U})$ turns out to be a Polish space.

In the following, a powerful criterion established by Budhiraj and Dupuis in \cite{budhiraja2000variational}  for the Laplace principle is given.
\begin{lemma}\label{WeakLemma}
Assume that
$\mathcal{G}^\varepsilon:C([0,T];\mathscr{U}_0)\to \mathscr{E}$ is a family of measurable mappings. If $X^{\varepsilon}=\mathcal{G}^\varepsilon(W_\cdot)$ and there exists a measurable map $\mathcal{G}^0:C([0,T];\mathscr{U}_0)\to \mathscr{E}$ such that the following two conditions hold:

\vspace{1mm}
(a) Let $\{\phi^\varepsilon:\varepsilon>0\}\subset\mathcal A_M$,~$M<\infty$. If $\phi^\varepsilon$ converges to $\phi$ in distribution as $S_M$-valued random elements, then
$$\mathcal{G}^\varepsilon\Big(W_\cdot+\displaystyle\frac 1{\sqrt{\varepsilon}}\int_0^\cdot\phi^\varepsilon_sds\Big)\to \mathcal{G}^0\Big(\int_0^\cdot\phi_sds\Big)$$
in distribution as $\varepsilon\to0.$

\vspace{1mm}
(b) For any $M<\infty,$ the set
$$K_M:=\Big\{\mathcal{G}^0\Big(\int_0^\cdot\phi_sds\Big):\phi\in S_M\Big\} $$
is a compact subset in $\mathscr{E}.$\\
Then the family $\{X^{\varepsilon}\}$ satisfies the Laplace principle (hence LDP) on $\mathscr{E}$ with the following good rate function
\begin{equation*}
I(f)=\inf _{\{\phi \in L^{2}([0, T] ; \mathscr{U_0}): f=\mathcal{G}^{0}(\int_{0}^{\cdot} \phi_{s} d s)\}}\left \{\frac{1}{2} \int_{0}^{T}\|\phi_{s}\|_{\mathscr{U_0}}^{2} d s\right \}.
\end{equation*}
\end{lemma}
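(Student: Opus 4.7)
The plan is to derive the Laplace principle directly from the Budhiraja--Dupuis variational representation for positive functionals of the cylindrical Wiener process $W$, and then invoke the standard equivalence between the Laplace principle and the LDP on a Polish space endowed with a good rate function (as recalled just before Definition \ref{Laplaceprinciple}). The starting point is the representation
\begin{equation*}
-\varepsilon \log \mathbb{E}\exp\Bigl[-\frac{h(X^\varepsilon)}{\varepsilon}\Bigr] = \inf_{\phi \in \mathcal{A}} \mathbb{E}\Bigl[\frac{1}{2}\int_0^T \|\phi_s\|_\mathscr{U}^2\, ds + h\Bigl(\mathcal{G}^\varepsilon\Bigl(W_\cdot + \frac{1}{\sqrt{\varepsilon}}\int_0^\cdot \phi_s\, ds\Bigr)\Bigr)\Bigr],
\end{equation*}
valid for every bounded measurable $h$ on $\mathscr{E}$. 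This recasts the small-noise asymptotics of $X^\varepsilon$ as the asymptotics of a stochastic control problem.

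To see that $I$ is a good rate function, I would note that any $f \in \mathscr{E}$ with $I(f) \le K$ admits, for each $\eta > 0$, a control $\phi \in S_{2(K+\eta)}$ realizing $f = \mathcal{G}^0(\int_0^\cdot \phi_s\, ds)$. Hence $\{f : I(f) \le K\} \subseteq K_{2K+2\eta}$ for every $\eta > 0$, and each $K_M$ is compact in $\mathscr{E}$ by condition (b). Lower semicontinuity is obtained by picking a minimizing sequence $\phi^n$ in the weakly compact set $S_{2(I(f)+1/n)}$ and passing to a weak limit, using condition (a) with deterministic controls to identify the image under $\mathcal{G}^0$. Thus the level sets are compact and $I$ is a good rate function.

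For the Laplace upper bound, fix a bounded continuous $h$, $x_0 \in \mathscr{E}$ with $I(x_0) < \infty$ and $\eta > 0$, and by definition of $I$ choose $\phi \in L^2([0,T];\mathscr{U})$ with $x_0 = \mathcal{G}^0(\int_0^\cdot \phi_s\, ds)$ and $\tfrac12\int_0^T \|\phi_s\|_\mathscr{U}^2\, ds \le I(x_0) + \eta$. Inserting this deterministic $\phi$ as a candidate in the variational representation, condition (a) applied with $\phi^\varepsilon \equiv \phi$ together with bounded convergence yields
\begin{equation*}
\limsup_{\varepsilon \to 0} -\varepsilon \log \mathbb{E}\exp\Bigl[-\frac{h(X^\varepsilon)}{\varepsilon}\Bigr] \le h(x_0) + I(x_0) + \eta.
\end{equation*}
Optimizing over $x_0$ and letting $\eta \to 0$ gives the required upper bound.

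The main obstacle is the lower bound. For each $\varepsilon$ I would pick $\phi^\varepsilon \in \mathcal{A}$ nearly optimal in the variational formula, say within $\varepsilon$ of the infimum. Boundedness of $h$ forces $\phi^\varepsilon \in \mathcal{A}_M$ for $M := 4\|h\|_\infty + 1$, and since $S_M$ is compact under the weak topology on $L^2([0,T];\mathscr{U})$, the laws of $\{\phi^\varepsilon\}$ are automatically tight in $S_M$. Pass to a subsequence along which $\phi^{\varepsilon_k}$ converges in distribution to some $S_M$-valued $\phi$; condition (a) then gives $\mathcal{G}^{\varepsilon_k}(W_\cdot + \tfrac{1}{\sqrt{\varepsilon_k}}\int_0^\cdot \phi^{\varepsilon_k}_s\, ds) \to \mathcal{G}^0(\int_0^\cdot \phi_s\, ds)$ in distribution. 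Switching to the Skorohod representation, continuity of $h$, weak lower semicontinuity of the $L^2$-norm and Fatou's lemma yield
\begin{equation*}
\liminf_{\varepsilon_k \to 0} -\varepsilon_k \log \mathbb{E}\exp\Bigl[-\frac{h(X^{\varepsilon_k})}{\varepsilon_k}\Bigr] \ge \mathbb{E}\Bigl[h\Bigl(\mathcal{G}^0\Bigl(\int_0^\cdot \phi_s\, ds\Bigr)\Bigr) + I\Bigl(\mathcal{G}^0\Bigl(\int_0^\cdot \phi_s\, ds\Bigr)\Bigr)\Bigr] \ge \inf_{x \in \mathscr{E}}\{h(x) + I(x)\},
\end{equation*}
completing the verification of the Laplace principle and hence of the LDP. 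The delicate step is precisely this passage to the limit, since the controls live in the weak topology while $\mathcal{G}^\varepsilon$ is nonlinear; everything is routed through condition (a), which is engineered exactly to handle this joint convergence.
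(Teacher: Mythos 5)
Your outline reproduces, in essentially the right order, the standard Budhiraja--Dupuis argument: the variational representation for $-\varepsilon\log\mathbb{E}\exp[-h(X^\varepsilon)/\varepsilon]$, a deterministic near-optimal control for the Laplace upper bound, and tightness of near-optimal controls plus condition (a) and Fatou for the lower bound. Note, however, that the paper does not prove Lemma \ref{WeakLemma} at all; it is imported from \cite{budhiraja2000variational}, so the relevant comparison is with that proof, and against it your sketch has one step that is wrong as stated.

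The gap is the claim that boundedness of $h$ ``forces $\phi^\varepsilon\in\mathcal{A}_M$ for $M:=4\|h\|_\infty+1$''. Taking $\phi\equiv0$ in the representation shows the infimum is at most $\|h\|_\infty$, so an $\varepsilon$-optimal control only satisfies the \emph{expectation} bound $\mathbb{E}\big[\tfrac12\int_0^T\|\phi^\varepsilon_s\|^2_{\mathscr{U}}\,ds\big]\le 2\|h\|_\infty+\varepsilon$, whereas membership in $\mathcal{A}_M$ requires the pathwise bound $\int_0^T\|\phi^\varepsilon_s\|^2_{\mathscr{U}}\,ds\le M$, $\mathbb{P}$-a.s., which does not follow. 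Without it, the weak compactness of $S_M$ on which your tightness and Skorohod step rests is simply not available. The standard repair (and the one in \cite{budhiraja2000variational}) is a stopping-time truncation: with $\tau^\varepsilon_M:=\inf\{t:\int_0^t\|\phi^\varepsilon_s\|^2_{\mathscr{U}}\,ds\ge M\}$, the control $\phi^\varepsilon\mathbf{1}_{[0,\tau^\varepsilon_M]}$ belongs to $\mathcal{A}_M$, has no larger energy, and changes the expected cost by at most $2\|h\|_\infty\,\mathbb{P}(\tau^\varepsilon_M<T)\le C\|h\|_\infty(\|h\|_\infty+1)/M$ by Chebyshev, uniformly in $\varepsilon$; one then runs your limiting argument inside $\mathcal{A}_M$ and sends $M\to\infty$ only at the very end.

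A smaller point: for the goodness of $I$, inclusion of a level set in the compact set $K_{2K+2\eta}$ is not enough; you also need the level set to be closed, i.e.\ that $\phi^n\rightharpoonup\phi$ in $S_M$ implies $\mathcal{G}^0(\int_0^\cdot\phi^n_s\,ds)\to\mathcal{G}^0(\int_0^\cdot\phi_s\,ds)$ in $\mathscr{E}$. This continuity of $\mathcal{G}^0$ on $S_M$ does follow from condition (a) with deterministic controls, but only via a diagonal argument (choose $\varepsilon_k\to0$ so fast that the law of $\mathcal{G}^{\varepsilon_k}(W_\cdot+\varepsilon_k^{-1/2}\int_0^\cdot\phi^{n_k}_s\,ds)$ is close to $\delta_{\mathcal{G}^0(\int_0^\cdot\phi^{n_k}_s\,ds)}$, then apply (a) along the family $\phi^{\varepsilon_k}:=\phi^{n_k}$); your phrase ``using condition (a) with deterministic controls to identify the image'' hides exactly this step. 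In the paper, this continuity statement is precisely what is verified for the concrete $\mathcal{G}^0$ in Subsection \ref{Sub4.4}.
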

\subsection{Hypotheses and main results}
Let $(\mathbb{U}_i,\langle \cdot,\cdot\rangle_{\mathbb{U}_i})$ and $(\mathbb{H}_i,\langle \cdot,\cdot\rangle_{\mathbb{H}_i}),~i=1,2,$ be separable Hilbert spaces with norms $\|\cdot\|_{\mathbb{U}_i}$ and $\|\cdot\|_{\mathbb{H}_i}$, respectively. Let $(\mathbb{V}_i,\|\cdot\|_{\mathbb{V}_i})$, $i=1,2$, be reflexive Banach spaces that are continuously and densely embedded into $\mathbb{H}_i$, $i=1,2$, respectively.  Identifying $\mathbb{H}_i$  via the Riesz isomorphism, we have the following Gelfand triples
$$\mathbb{V}_i\subset \mathbb{H}_i(\simeq \mathbb{H}_i^*)\subset \mathbb{V}^*_i,~i=1,2.$$
We denote the dualization between $\mathbb{V}_{i}^{*}$ and $\mathbb{V}_{i}$ by $_{\mathbb{V}_i^*}\langle\cdot, \cdot\rangle_{\mathbb{V}_{i}}$  (i.e.~$_{\mathbb{V}_i^*}\langle z, v\rangle_{\mathbb{V}_{i}}:=z(v)$ for $z \in \mathbb{V}_{i}^{*},~v \in \mathbb{V}_{i}$), it follows that
$$_{\mathbb{V}_i^*}\langle z, v\rangle_{\mathbb{V}_{i}}=\langle z, v\rangle_{\mathbb{H}_{i}}, \text{for all}~z \in \mathbb{H}_{i},~v \in \mathbb{V}_{i}.$$
Let $L_{2}(\mathbb{U}_{i}, \mathbb{H}_{i})$ denote the spaces of all Hilbert-Schmidt operators from $\mathbb{U}_{i}$ to $\mathbb{H}_{i}$ equipped with the Hilbert-Schmidt  norm $\|\cdot\|_{L_{2}(\mathbb{U}_{i}, \mathbb{H}_{i})}.$ For a Banach space $(\mathbb{B},\|\cdot\|_{\mathbb{B}})$, we denote by  $C([0,T];\mathbb{B})$ the space of all continuous functions on $[0,T]$ onto $\mathbb{B}$, which is a Polish space with respect to the metric
\begin{equation*}
d(h,g):=\sup_{t\in[0,T]}\|h_t-g_t\|_{\mathbb{B}}.
\end{equation*}

\vspace{1mm}
In the present paper, we assume that measurable maps
\begin{eqnarray*}
\!\!\!\!\!\!\!\!&&\mathfrak{A}_{1}: \mathbb{V}_{1} \to \mathbb{V}_{1}^{*};\\
\!\!\!\!\!\!\!\!&&\mathfrak{f}: \mathbb{H}_{1} \times \mathbb{H}_{2} \to \mathbb{H}_{1}; \\ \!\!\!\!\!\!\!\!&&\mathfrak{B}_{1}: \mathbb{V}_{1} \to L_{2}(\mathbb{U}_{1}, \mathbb{H}_{1});\\
\!\!\!\!\!\!\!\!&&\mathfrak{A}_{2}: \mathbb{H}_{1} \times \mathbb{V}_{2} \to \mathbb{V}_{2}^{*}; \\
\!\!\!\!\!\!\!\!&&\mathfrak{B}_{2}: \mathbb{H}_{1} \times \mathbb{V}_{2} \to L_{2}(\mathbb{U}_{2}, \mathbb{H}_{2})
\end{eqnarray*}
satisfy the following conditions:

\begin{hypothesis}\label{hypo1} (Slow component)
Assume that the embedding $\mathbb{V}_1\subset \mathbb{H}_1$ is compact, and there exist constants $\alpha_1 >1,~\beta_1 \geq 0,$
 and $\eta_1,C>0$ such that for all $u,v,w \in \mathbb{V}_1,~u_1,u_2\in \mathbb{H}_1,~v_1,v_2\in \mathbb{H}_2$, the following conditions hold.
\begin{enumerate}
\item [$({\mathbf{A}}{\mathbf{1}})$] (Hemicontinuity) The map $\lambda\mapsto\,_{\mathbb{V}_1^*}\langle \mathfrak{A}(u+\lambda v),w\rangle_{\mathbb{V}_1}$ is continuous on $\mathbb R$.
\item [$({\mathbf{A}}{\mathbf{2}})$] (Local monotonicity)
\begin{equation*}
_{\mathbb{V}_{1}^{*}}\left\langle \mathfrak{A}_{1}(u)-\mathfrak{A}_{1}(v), u-v\right\rangle_{\mathbb{V}_{1}}
\leq(C+\rho(u)+\eta(v))\|u-v\|_{\mathbb{H}_{1}}^{2},
\end{equation*}
where $\rho,\eta:\mathbb{V}_1\to \mathbb{R}_{+}$ are two measurable functions that satisfy
\begin{equation}\label{conloc}
\rho(u)+\eta(u) \leq C(1+\|u\|_{\mathbb{V}_{1}}^{\alpha_{1}})(1+\|u\|_{\mathbb{H}_{1}}^{\beta_1}).
\end{equation}
\item [$({\mathbf{A}}{\mathbf{3}})$] (Coercivity)
    \begin{equation*}
    2_{\mathbb{V}_{1}^{*}}\langle \mathfrak{A}_{1}(u), u\rangle_{\mathbb{V}_{1}}+\|\mathfrak{B}_{1}(u)\|_{L_{2}(\mathbb{U}_{1}, \mathbb{H}_{1})}^{2} \leq-\eta_{1}\|u\|_{\mathbb{V}_{1}}^{\alpha_{1}}+C(1+\|u\|_{\mathbb{H}_{1}}^{2}).
    \end{equation*}
\item [$({\mathbf{A}}{\mathbf{4}})$] (Growth)
\begin{equation*}
\|\mathfrak{A}_{1}(u)\|_{\mathbb{V}_{1}^{*}}^{\frac{\alpha_{1}}{\alpha_{1}-1}} \leq C(1+\|u\|_{\mathbb{V}_{1}}^{\alpha_{1}})(1+\|u\|_{\mathbb{H}_{1}}^{\beta_{1}}).
\end{equation*}
\item [$({\mathbf{A}}{\mathbf{5}})$](Lipschitz of $\mathfrak{f},\mathfrak{B}$)
\begin{equation*}
\|\mathfrak{f}(u_{1}, v_{1})-\mathfrak{f}(u_{2}, v_{2})\|_{\mathbb{H}_{1}} \leq C(\|u_{1}-u_{2}\|_{\mathbb{H}_{1}}+\|v_{1}-v_{2}\|_{\mathbb{H}_{2}}),\label{fLipschitz}
\end{equation*}
\begin{equation}\label{growthB1}
\|\mathfrak{B}_{1}(u)-\mathfrak{B}_{1}(v)\|_{L_{2}(\mathbb{U}_{1}, \mathbb{H}_{1})} \leq C\|u-v\|_{\mathbb{H}_{1}}.
\end{equation}

\end{enumerate}
\end{hypothesis}

\begin{hypothesis}\label{hypo2}(Fast component)
Assume that the embedding $\mathbb{V}_2\subset \mathbb{H}_2$ is compact, and there exist constants $\alpha_2 >1,\beta_2\geq0,\eta_2 ,\kappa>0$ and $C>0$ such that for all $v,v_1,v_2,w \in \mathbb{V}_2$, $u,u_1,u_2\in \mathbb{H}_1$ the following conditions hold.
\begin{enumerate}
\item [$({\mathbf{H}}{\mathbf{1}})$] (Hemicontinuity) The map $\lambda\mapsto\,_{\mathbb{V}_2^*}\langle \mathfrak{A}_2(u_1+\lambda u_2,v_1+\lambda v_2),w\rangle_{\mathbb{V}_2}$ is continuous on $\mathbb R$.
\item [$({\mathbf{H}}{\mathbf{2}})$] (Strict monotonicity)
\begin{eqnarray*}
\!\!\!\!\!\!\!\!&&2\,_{\mathbb{V}_2^*}\langle \mathfrak{A}_2(u_1,v_1)-\mathfrak{A}_2(u_1,v_2),v_1-v_2\rangle_{\mathbb{V}_2}+\|\mathfrak{B}_2(u_1,v_1)-\mathfrak{B}_2(u_1,v_2)\|_{L_2(\mathbb{U}_2,\mathbb{H}_2)}^2\\
\!\!\!\!\!\!\!\!&&\leq-\kappa\|v_1-v_2\|_{\mathbb{H}_2}^2+C\|u_1-u_2\|_{\mathbb{H}_1}^2
\end{eqnarray*}
\text{and}
\begin{equation*}
\|\mathfrak{B}_2(u_1,v_1)-\mathfrak{B}_2(u_2,v_2)\|_{L_2(\mathbb{U}_2,\mathbb{H}_2)}\leq C(\|u_1-u_2\|_{\mathbb{H}_1}+\|v_1-v_2\|_{\mathbb{H}_2}).
\end{equation*}

\item [$({\mathbf{H}}{\mathbf{3}})$] (Coercivity)
\begin{equation*}
2_{\mathbb{V}_2^*}\langle \mathfrak{A}_2(u,v),v\rangle_{\mathbb{V}_2}\leq C\|v\|_{\mathbb{H}_2}^2-\eta_2\|v\|_{\mathbb{V}_2}^{\alpha_2}+C(1+\|u\|_{\mathbb{H}_1}^2).
\end{equation*}

\item [$({\mathbf{H}}{\mathbf{4}})$] (Growth)
\begin{equation*}
\|\mathfrak{A}_{2}(u, v)\|_{\mathbb{V}_{2}^{*}}^{\frac{\alpha_{2}}{\alpha_{2}-1}} \leq C(1+\|v\|_{\mathbb{V}_{2}}^{\alpha_{2}})(1+\|v\|_{\mathbb{H}_{2}}^{\beta_{2}})+C\|u\|_{\mathbb{H}_{1}}^{2}
\end{equation*}
and
\begin{equation*}
\sup_{v\in \mathbb{V}_2}\|\mathfrak{B}_2(u,v)\|_{L_2(\mathbb{U}_2,\mathbb{H}_2)}\leq C(1+\|u\|_{\mathbb{H}_1}).
\end{equation*}
\end{enumerate}
\end{hypothesis}

The following well-posedness result of (\ref{aimequation}) can be easily proven under Hypothesises \ref{hypo1}-\ref{hypo2} by utilizing the arguments presented in \cite{liu2015stochastic} and \cite{RSZ}. Thus we omit the details.

\begin{lemma}\label{Lem2.1}
Suppose that Hypothesises \ref{hypo1}-\ref{hypo2} hold. For
each starting point $(x,y) \in \mathbb{H}_1\times \mathbb{H}_2$ and $\varepsilon,\delta >0$, (\ref{aimequation}) has a unique solution $(X^{\varepsilon,\delta},Y^{\varepsilon,\delta})$, which refers to a continuous $\mathbb{H}_1\times \mathbb{H}_2$-valued $(\mathscr F_t)$-adapted process  if for its $dt\otimes \mathbb P$-equivalent class $(\hat{X}^{\varepsilon,\delta},\hat{Y}^{\varepsilon,\delta})$ we have
\begin{equation*}
\hat{X}^{\varepsilon,\delta}\in L^{\alpha_1}([0,T]\times\Omega,dt\otimes\mathbb P;\mathbb{V}_1)\cap L^2([0,T]\times\Omega,dt\otimes\mathbb P;\mathbb{H}_1),
\end{equation*}
\begin{equation*}
\hat{Y}^{\varepsilon,\delta}\in L^{\alpha_2}([0,T]\times\Omega,dt\otimes\mathbb P;\mathbb{V}_2)\cap L^2([0,T]\times\Omega,dt\otimes\mathbb P;\mathbb{H}_2),
\end{equation*}
where $\alpha_1,\alpha_2$ are defined in $({\mathbf{A}}{\mathbf{3}})$ and $({\mathbf{H}}{\mathbf{3}})$, respectively, and $\mathbb P$-a.s.
\begin{eqnarray*}
\left\{ \begin{aligned}
  &\widetilde{X}_{t}^{\varepsilon,\delta}=x+\displaystyle\int_0^t[\mathfrak{A}_{1}(\widetilde{X}_{s}^{\varepsilon,\delta})+\mathfrak{f}(\widetilde{X}_{s}^{\varepsilon,\delta},\widetilde{Y}_{s}^{\varepsilon,\delta})] d s+\displaystyle\sqrt{\varepsilon}\int_{0}^{t} \mathfrak{B}_{1}(\widetilde{X}_{s}^{\varepsilon,\delta}) d W_{s}^{1}, \\
  &\widetilde{Y}_{t}^{\varepsilon,\delta}=y+\displaystyle\frac{1}{\delta} \int_{0}^{t} \mathfrak{A}_{2}(\widetilde{X}_{s}^{\varepsilon,\delta}, \widetilde{Y}_{s}^{\varepsilon,\delta}) d s+\displaystyle\frac{1}{\sqrt{\delta}} \int_{0}^{t} \mathfrak{B}_{2}(\widetilde{X}_{s}^{\varepsilon,\delta}, \widetilde{Y}_{s}^{\varepsilon,\delta}) d W_{s}^{2},
\end{aligned} \right.
\end{eqnarray*}
where $(\widetilde{X}^{\varepsilon,\delta},\widetilde{Y}^{\varepsilon,\delta})$ is any $\mathbb{V}_1\times \mathbb{V}_2$-valued progressively measurable $dt\otimes\mathbb P$ version of $(\hat{X}^{\varepsilon,\delta},\hat{Y}^{\varepsilon,\delta})$.
\end{lemma}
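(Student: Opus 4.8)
The plan is to establish existence and uniqueness of solutions to the slow-fast system \eref{aimequation} by reducing it to two coupled abstract evolution equations of the type treated in the monotone/pseudo-monotone framework of \cite{liu2015stochastic} and \cite{RSZ}, rather than proving everything from scratch. The key observation is that Hypothesis \ref{hypo1} on the slow component is precisely a fully local monotonicity condition of the form \eref{conful}, while Hypothesis \ref{hypo2} on the fast component is even stronger (strict monotonicity). The standard strategy is a Galerkin approximation plus a compactness/monotonicity argument, and the coupling between the two components is handled by a fixed-point or successive-approximation scheme in a suitable function space.

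First I would set up the Galerkin scheme: pick orthonormal bases of $\mathbb{H}_1$ and $\mathbb{H}_2$ consisting of elements of $\mathbb{V}_1$ and $\mathbb{V}_2$ respectively, and project \eref{aimequation} onto the finite-dimensional subspaces to obtain a system of SDEs whose coefficients are locally Lipschitz (via hemicontinuity (\textbf{A1}), (\textbf{H1})) and satisfy the coercivity bounds (\textbf{A3}), (\textbf{H3}); this yields global existence of the Galerkin solutions $(X^{n},Y^{n})$. Next I would derive uniform a priori estimates: apply Itô's formula to $\|X^{n}_{t}\|_{\mathbb{H}_1}^2$ and $\|Y^{n}_{t}\|_{\mathbb{H}_2}^2$, use the coercivity conditions (\textbf{A3}) and (\textbf{H3}) together with the Lipschitz bound on $\mathfrak{f}$ from (\textbf{A5}) and the growth bound $\sup_{v}\|\mathfrak{B}_2(u,v)\|_{L_2}\le C(1+\|u\|_{\mathbb{H}_1})$ from (\textbf{H4}), and a Gronwall argument, to bound
\[
\mathbb{E}\Big[\sup_{t\le T}\|X^{n}_t\|_{\mathbb{H}_1}^2\Big]+\mathbb{E}\int_0^T\|X^{n}_t\|_{\mathbb{V}_1}^{\alpha_1}dt+\mathbb{E}\Big[\sup_{t\le T}\|Y^{n}_t\|_{\mathbb{H}_2}^2\Big]+\mathbb{E}\int_0^T\|Y^{n}_t\|_{\mathbb{V}_2}^{\alpha_2}dt
\]
uniformly in $n$ (for fixed $\varepsilon,\delta$); one also needs $\mathbb{E}\sup_t\|X^n_t\|_{\mathbb{H}_1}^{2+\gamma}$ or a $\beta_1$-dependent moment to absorb the $\|u\|_{\mathbb{H}_1}^{\beta_1}$ factors in \eref{conloc} and (\textbf{A4}), following \cite{RSZ}. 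The growth bounds (\textbf{A4}), (\textbf{H4}) then give uniform bounds on $\mathfrak{A}_1(X^n)$ in $L^{\alpha_1/(\alpha_1-1)}([0,T]\times\Omega;\mathbb{V}_1^*)$ and on $\mathfrak{A}_2(X^n,Y^n)$ in $L^{\alpha_2/(\alpha_2-1)}([0,T]\times\Omega;\mathbb{V}_2^*)$.

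Then I would extract weakly convergent subsequences: $X^n\rightharpoonup X$ in $L^{\alpha_1}([0,T]\times\Omega;\mathbb{V}_1)$, $Y^n\rightharpoonup Y$ in $L^{\alpha_2}([0,T]\times\Omega;\mathbb{V}_2)$, $\mathfrak{A}_1(X^n)\rightharpoonup \mathfrak{A}_1^\ast$, $\mathfrak{A}_2(X^n,Y^n)\rightharpoonup \mathfrak{A}_2^\ast$, $\mathfrak{B}_1(X^n)\rightharpoonup \mathfrak{B}_1^\ast$ weakly in the respective spaces, and pass to the limit in the Galerkin equations to identify $X,Y$ as solutions of the limiting identities with the starred coefficients in place of the nonlinear terms. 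The crucial and hardest step is then the identification $\mathfrak{A}_1^\ast=\mathfrak{A}_1(X)$ and $\mathfrak{A}_2^\ast=\mathfrak{A}_2(X,Y)$: for the fast component this follows from the strict monotonicity (\textbf{H2}) combined with the Minty-type argument, but for the slow component one must invoke the pseudo-monotonicity machinery of \cite{RSZ} — using (\textbf{A2}) with the functions $\rho,\eta$ controlled by \eref{conloc}, together with the compact embedding $\mathbb{V}_1\subset\mathbb{H}_1$ (which gives, via a tightness/Skorokhod or a localization argument, strong convergence of $X^n$ in $L^2([0,T];\mathbb{H}_1)$ along a subsequence, or at least $dt\otimes\mathbb{P}$-a.e.) so that the local monotonicity trick of Liu–Röckner can be applied. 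Uniqueness follows by the same local monotonicity estimate: for two solutions, apply Itô to $\|X^1_t-X^2_t\|_{\mathbb{H}_1}^2+\|Y^1_t-Y^2_t\|_{\mathbb{H}_2}^2$, use (\textbf{A2}), (\textbf{H2}), the Lipschitz bounds in (\textbf{A5}), (\textbf{H2}) on $\mathfrak{f},\mathfrak{B}_1,\mathfrak{B}_2$, and a stochastic Gronwall lemma (with a stopping-time localization to handle the $\rho(X^1)+\eta(X^2)$ term, which is only in $L^1$ in time). The main obstacle throughout is the interplay of the fully local monotonicity of $\mathfrak{A}_1$ with the coupling term $\mathfrak{f}(X,Y)$ and the $1/\delta$, $1/\sqrt\delta$ scalings in the fast equation; but since $\varepsilon,\delta$ are fixed here, the scalings are harmless constants and the argument is a direct adaptation of \cite{liu2015stochastic,RSZ}, which is why the details are omitted.
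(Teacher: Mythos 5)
Your proposal is correct and follows essentially the same route as the paper, which in fact omits the proof entirely and simply defers to the arguments of \cite{liu2015stochastic} and \cite{RSZ} — precisely the Galerkin approximation, coercivity estimates, Minty/pseudo-monotonicity identification (with compactness of $\mathbb{V}_1\subset\mathbb{H}_1$ and a Skorokhod-type argument for the fully local monotone slow drift, then pathwise uniqueness via the local monotonicity and a localized Gronwall argument) that you outline. Since $\varepsilon,\delta$ are fixed constants here, your observation that the scalings are harmless is exactly the point, and no genuine gap is present beyond details already contained in the cited works.
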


In this paper, we will employ the theory of pseudo-monotone operator. To this end, we first recall the definition of pseudo-monotone operator. For abbreviation, we use the notation  ``$\rightharpoonup$'' for weak convergence in a Banach space.

\begin{definition}\label{de1} An operator $\mathfrak{A}$ from $\mathbb{V}$ to $\mathbb{V}^*$ is called to be a pseudo-monotone operator if $u_n\rightharpoonup u$ in $\mathbb{V}$ and
$$\liminf _{n \to \infty}\,_{\mathbb{V}^*}\langle \mathfrak{A}(u_{n}), u_{n}-u\rangle_{\mathbb{V}} \geq 0,$$
then for any $v \in \mathbb{V}$,
$$\limsup _{n \to \infty}\,_{\mathbb{V}^*}\langle \mathfrak{A}(u_{n}), u_{n}-v\rangle_{\mathbb{V}} \leq \,_{\mathbb{V}^*}\langle \mathfrak{A}(u), u-v\rangle_{\mathbb{V}}.$$
\end{definition}

\begin{remark}
Note that Browder \cite{Browder1977} introduced a different definition  of pseudo-monotone operator: An operator $\mathfrak{A}$ from $\mathbb{V}$ to $\mathbb{V}^*$ is called  pseudo-monotone if $u_n\rightharpoonup u$ in $\mathbb{V}$ and
$$\liminf _{n \to \infty}\,_{\mathbb{V}^*}\langle \mathfrak{A}(u_{n}), u_{n}-u\rangle_{\mathbb{V}} \geq 0$$
implies $\mathfrak{A}(u_n)\rightharpoonup\mathfrak{A}(u)$ in $\mathbb{V}^*$ and
$$\lim _{n \to \infty}\,_{\mathbb{V}^*}\langle \mathfrak{A}(u_{n}), u_{n}\rangle_{\mathbb{V}}=\,_{\mathbb{V}^*}\langle \mathfrak{A}(u), u\rangle_{\mathbb{V}}.$$
This definition turns out to be equivalent to Definition \ref{de1} (cf.~\cite[Remark 5.2.12]{liu2015stochastic}).
\end{remark}
\begin{remark}\label{re2.2}
We  emphasize that the conditions $({\mathbf{A}}{\mathbf{1}})$ and $({\mathbf{A}}{\mathbf{2}})$ imply that  the map $\mathbb{V}_1 \ni u  \mapsto \mathfrak{A}_1(u) \in \mathbb{V}_1^{*}$  is  pseudo-monotone  (see Lemma 2.15 in \cite{RSZ}), which plays a crucial role in both the proof of the well-posedness of the skeleton equation (\ref{skletonequation}) and the verification of sufficient condition (b) in Lemma \ref{WeakLemma}.
\end{remark}

To present the main result of this work, we need to make some necessary notations. Let $\mathscr{U}=\mathbb{U}_1\times \mathbb{U}_2$ be Hilbert product spaces. We define an $\mathscr{U}$-valued cylindrical Wiener process $W$,  then  we can select projection operators $P_1:\mathscr{U} \to \mathbb{U}_1,P_2:\mathscr{U} \to \mathbb{U}_2$ such that cylindrical Wiener processes $W^i, i=1,2,$ are given by
$$W^1_t:=P_1W_t,~W^2_t:=P_2W_t.$$

Now, we define the following skeleton equation
\begin{eqnarray}\label{skletonequation}
\left\{\begin{aligned}
&\displaystyle\frac{d \bar{X}_{t}^{\phi}}{d t}=\mathfrak{A}_1(\bar{X}_{t}^{\phi})+\bar{\mathfrak{f}}(\bar{X}_{t}^{\phi})+\mathfrak{B}_{1}(\bar{X}_{t}^{\phi})P_1\phi,\\
&\bar{X}_{0}^{\phi}=x,
\end{aligned}\right.
\end{eqnarray}
where $\phi\in L^2([0,T];\mathscr{U}),~\bar{\mathfrak{f}}(x):=\displaystyle\int_{\mathbb{H}_2}\mathfrak{f}(x,z)\mu^{x}(dz),~x\in{\mathbb{H}_1},$ and $\mu^{x}$ is the
unique invariant measure of the Markov semigroup to the parameterized equation  associated with the fast component in (\ref{aimequation}) (see (\ref{frozenequation}) below). The well-posedness of solutions to (\ref{skletonequation}) and some useful lemmas are postponed in Appendix. Based on this, we define a measurable map $\mathcal{G}^0:C([0,T];\mathscr{U}_0)\to C([0,T];\mathbb{H}_1)$ by
\begin{eqnarray*}
\mathcal{G}^0\Big(\int_0^\cdot\phi_s ds\Big):=\left\{ \begin{aligned}&\bar{X}^\phi,~~\phi\in L^2([0,T]; \mathscr{U});\\
&0,~~~~~~\text{otherwise}.
\end{aligned} \right.
\end{eqnarray*}

Now, we present the main result of this paper.

\begin{theorem}\label{t1}
Assume that  Hypothesises \ref{hypo1}-\ref{hypo2} hold. If
\begin{equation*}
\lim _{\varepsilon \to 0} \frac{\delta}{\varepsilon}=0,
\end{equation*}
then $\{X^{\varepsilon,\delta}:\varepsilon>0\}$ to (\ref{aimequation}) satisfies the LDP on $C([0,T]; \mathbb{H}_1)$ with good rate function
\begin{equation}\label{ratef}
I(f)=\inf _{\{\phi \in L^{2}([0, T] ; \mathscr{U_0}): f=\mathcal{G}^{0}(\int_{0}^{\cdot} \phi_{s} d s)\}}\left \{\frac{1}{2} \int_{0}^{T}\|\phi_{s}\|_{\mathscr{U_0}}^{2} d s\right \}.
\end{equation}

\end{theorem}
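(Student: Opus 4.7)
The plan is to apply the Budhiraja--Dupuis criterion (Lemma \ref{WeakLemma}) with Polish space $\mathscr{E} = C([0,T]; \mathbb{H}_1)$, with $\mathcal{G}^\varepsilon$ the solution map $X^{\varepsilon,\delta} = \mathcal{G}^\varepsilon(W_\cdot)$ produced by Lemma \ref{Lem2.1}, and with $\mathcal{G}^0$ defined via the skeleton equation (\ref{skletonequation}). Two things must be checked: compactness of $K_M$ (condition (b)) and weak convergence of the controlled slow process to $\bar{X}^\phi$ (condition (a)).

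\textbf{Compactness of $K_M$.} I would take $\phi_n \rightharpoonup \phi$ in $S_M$ and prove $\bar{X}^{\phi_n} \to \bar{X}^\phi$ in $C([0,T]; \mathbb{H}_1)$. The coercivity $({\mathbf{A}}{\mathbf{3}})$ combined with the $L^2$ bound $\|\phi_n\|_{L^2} \leq \sqrt{M}$ gives uniform bounds on $\bar{X}^{\phi_n}$ in $L^\infty([0,T]; \mathbb{H}_1) \cap L^{\alpha_1}([0,T]; \mathbb{V}_1)$, and the growth $({\mathbf{A}}{\mathbf{4}})$ together with (\ref{growthB1}) gives a uniform bound on $\frac{d}{dt}\bar{X}^{\phi_n}$ in $L^{\alpha_1/(\alpha_1-1)}([0,T]; \mathbb{V}_1^*) + L^2([0,T]; \mathbb{H}_1)$. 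The Aubin--Lions lemma (applicable because $\mathbb{V}_1 \hookrightarrow \mathbb{H}_1$ is compact) yields strong convergence of a subsequence in $L^2([0,T]; \mathbb{H}_1)$. The pseudo-monotonicity of $\mathfrak{A}_1$ (Remark \ref{re2.2}) then identifies the weak limit in $\mathbb{V}_1^*$ of $\mathfrak{A}_1(\bar{X}^{\phi_n})$ as $\mathfrak{A}_1(\bar{X}^\phi)$; the control term $\mathfrak{B}_1(\bar{X}^{\phi_n}) P_1 \phi_n$ passes to the limit by combining the Lipschitz bound (\ref{growthB1}) with the weak convergence $\phi_n \rightharpoonup \phi$; the Lipschitz averaged drift $\bar{\mathfrak{f}}$ is straightforward. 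Uniqueness of the skeleton solution propagates the convergence to the whole sequence.

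\textbf{Convergence of the controlled system.} Given $\phi^\varepsilon \in \mathcal{A}_M$ with $\phi^\varepsilon \to \phi$ in distribution in $S_M$, Girsanov's theorem produces the controlled system (\ref{controlequation}) whose slow marginal equals $\mathcal{G}^\varepsilon\bigl(W_\cdot + \tfrac{1}{\sqrt{\varepsilon}} \int_0^\cdot \phi^\varepsilon_s ds\bigr)$, with added drift $\mathfrak{B}_1(X^{\varepsilon,\delta,\phi^\varepsilon}) P_1 \phi^\varepsilon$ in the slow equation and an extra $\sqrt{\delta/\varepsilon}\,\mathfrak{B}_2(X^{\varepsilon,\delta,\phi^\varepsilon}, Y^{\varepsilon,\delta,\phi^\varepsilon}) P_2 \phi^\varepsilon$ in the fast equation. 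The Khasminskii time-discretization then proceeds in four steps: (i) partition $[0,T]$ with mesh $\Delta = \Delta(\varepsilon) \to 0$ and introduce the auxiliary fast process $\hat{Y}^{\varepsilon,\delta}$ which on each $[k\Delta,(k+1)\Delta)$ solves the fast equation with slow input frozen at $X^{\varepsilon,\delta,\phi^\varepsilon}_{k\Delta}$; (ii) control $\mathbb{E}\int_0^T \|Y^{\varepsilon,\delta,\phi^\varepsilon}_s - \hat{Y}^{\varepsilon,\delta}_s\|_{\mathbb{H}_2}^2\, ds$ and $\mathbb{E}\int_0^T \|\mathfrak{f}(X^{\varepsilon,\delta,\phi^\varepsilon}_s, Y^{\varepsilon,\delta,\phi^\varepsilon}_s) - \mathfrak{f}(X^{\varepsilon,\delta,\phi^\varepsilon}_{k(s)\Delta}, \hat{Y}^{\varepsilon,\delta}_s)\|_{\mathbb{H}_1}\, ds$ via the new a priori estimates in Lemmas \ref{controlestimate}, \ref{condiff}, \ref{FastAuxiliaryDiffLemma}; (iii) on each frozen interval, exploit the exponential ergodicity of the fast semigroup with invariant measure $\mu^{X^{\varepsilon,\delta,\phi^\varepsilon}_{k\Delta}}$ (Appendix) to replace the frozen interaction by $\bar{\mathfrak{f}}(X^{\varepsilon,\delta,\phi^\varepsilon}_{k\Delta})$ with error tending to $0$ provided $\delta/\Delta \to 0$; (iv) tune $\Delta$ so that $\delta \ll \Delta \to 0$, which is consistent with $\delta/\varepsilon \to 0$. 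Tightness of $X^{\varepsilon,\delta,\phi^\varepsilon}$ in $C([0,T]; \mathbb{H}_1)$ follows from the uniform energy estimates and Aubin--Lions; Skorokhod's representation lets me work on a common space. On any limit point the stochastic integral $\sqrt{\varepsilon}\mathfrak{B}_1(\cdot)\,dW^1$ vanishes in $L^2$ by the BDG inequality, and the same pseudo-monotone limit procedure as in (b) identifies $\mathfrak{A}_1(\bar{X}_t)$, so the limit solves (\ref{skletonequation}) and must equal $\bar{X}^\phi$ by uniqueness.

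\textbf{Principal obstacle.} The genuine difficulty lies in the interaction between the fully local monotonicity condition $({\mathbf{A}}{\mathbf{2}})$ (with $\rho, \eta$ of the power growth (\ref{conloc}) in $\mathbb{V}_1$) and the multiplicative noise in the fast component. The former forces the variational/pseudo-monotone framework and excludes the mild-solution tools used in prior works such as \cite{WRD,HS}. The latter makes the invariant measure $\mu^x$ genuinely $x$-dependent through a nonlinear coupling, and the extra controlled drift $\sqrt{\delta/\varepsilon}\,\mathfrak{B}_2(X,Y) P_2\phi^\varepsilon$ in the fast equation is only small because $\delta/\varepsilon \to 0$. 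The refined moment and difference estimates of Lemmas \ref{controlestimate}, \ref{condiff} and \ref{FastAuxiliaryDiffLemma} are what actually close the Khasminskii averaging under these weaker structural assumptions, and I anticipate that establishing them\,---\,rather than the conceptual framework itself\,---\,will be the main technical hurdle.
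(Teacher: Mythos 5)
Your overall architecture coincides with the paper's: Lemma \ref{WeakLemma} with $\mathscr{E}=C([0,T];\mathbb{H}_1)$, the skeleton equation (\ref{skletonequation}) defining $\mathcal{G}^0$, Girsanov to pass to the controlled system (\ref{controlequation}), Khasminskii's discretization with the frozen auxiliary process, the exponential ergodicity of the parameterized fast equation, the a priori estimates of Lemmas \ref{controlestimate}, \ref{condiff}, \ref{FastAuxiliaryDiffLemma}, the tuning $\delta\ll\zeta\to 0$, and Skorokhod's representation. The genuine gap is that in both (a) and (b) you never actually produce convergence in the topology in which the LDP is stated, namely the \emph{uniform} $\mathbb{H}_1$ topology. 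In (b), boundedness in $L^\infty([0,T];\mathbb{H}_1)\cap L^{\alpha_1}([0,T];\mathbb{V}_1)$ with time derivatives in $L^{\alpha_1/(\alpha_1-1)}([0,T];\mathbb{V}_1^*)+L^2([0,T];\mathbb{H}_1)$ gives, via Aubin--Lions/Simon, relative compactness only in spaces like $L^{\alpha_1}([0,T];\mathbb{H}_1)\cap C([0,T];\mathbb{V}_1^*)$ (this is exactly the paper's Lemma \ref{pre}); since the family is \emph{not} bounded in $L^\infty([0,T];\mathbb{V}_1)$, no Aubin--Lions-type theorem yields compactness in $C([0,T];\mathbb{H}_1)$. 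The paper therefore needs a separate step: the energy identity for $Z^n=\bar X^{\phi^n}-\bar X^{\phi}$, the local monotonicity $({\mathbf{A}}{\mathbf{2}})$ and Gronwall, and — crucially — a time-discretization of the term $\int_0^t\langle\mathfrak{B}_1(\bar X^{\phi}_s)P_1(\phi^n_s-\phi_s),Z^n_s\rangle_{\mathbb{H}_1}ds$ in which the \emph{compactness of the Hilbert--Schmidt operators} $\mathfrak{B}_1(\bar X^{\phi}_{k\zeta})$ converts the weak convergence $\phi^n\rightharpoonup\phi$ into strong $\mathbb{H}_1$-convergence of $\mathfrak{B}_1(\bar X^{\phi}_{k\zeta})\int_{k\zeta}^{(k+1)\zeta}P_1(\phi^n_s-\phi_s)ds$. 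Your sketch, which stops at "$L^2$-strong convergence plus pseudo-monotone identification", cannot rule out that $\sup_t\|\bar X^{\phi^n}_t-\bar X^{\phi}_t\|_{\mathbb{H}_1}$ stays bounded away from zero, so compactness of $K_M$ in $C([0,T];\mathbb{H}_1)$ is not proved.

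The same defect is more serious in (a), where you replace the paper's argument by "tightness of $X^{\phi^\varepsilon}$ in $C([0,T];\mathbb{H}_1)$ via energy estimates and Aubin--Lions, then identify the limit". Tightness in $C([0,T];\mathbb{H}_1)$ is not available in this variational framework for the same reason as above (no uniform $\mathbb{V}_1$-in-time bound, no modulus-of-continuity estimate in $\mathbb{H}_1$), and identifying a limit point in a weaker topology does not verify condition (a) of Lemma \ref{WeakLemma} on $\mathscr{E}=C([0,T];\mathbb{H}_1)$. The paper instead estimates $\sup_{t\le T\wedge\tau_R^\varepsilon}\|X^{\phi^\varepsilon}_t-\bar X^{\phi}_t\|_{\mathbb{H}_1}^2$ directly by It\^o's formula, using $({\mathbf{A}}{\mathbf{2}})$ inside a Gronwall argument whose weight $\rho(X^{\phi^\varepsilon})+\eta(\bar X^{\phi})+\|\phi^\varepsilon\|_{\mathscr U}^2$ is only controllable after localization by the stopping time $\tau_R^\varepsilon$ (then Chebyshev and $R\to\infty$), and again needs the discretization-plus-compactness-of-$\mathfrak{B}_1$ device to kill the term driven by $\phi^\varepsilon-\phi$, together with the conditioning/Markov argument that reduces the averaged-drift error to the ergodicity bound of Lemma \ref{expon}. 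These elements — the stopping-time localization forced by the fully local monotonicity (\ref{conloc}) and the mechanism by which weak convergence of controls is upgraded through compact operators — are the actual content of the paper's verification of (a) and (b) beyond the standard scheme, and they are missing from your proposal; as written, your route would fail at the passage to the $C([0,T];\mathbb{H}_1)$ topology rather than at the a priori estimates you flag as the main hurdle.
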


\begin{remark}
Compared to previous works \cite{HS,HLL,SWXY}, the main novelties in Theorem \ref{t1} are twofold. Firstly, we incorporate  the theory of pseudo-monotone operator to extend the slow component to encompass fully local monotone operators. This extension allows us to explore a wider range of interesting applications of   SPDEs, such as  stochastic Cahn-Hilliard equations and stochastic 2D Liquid crystal equations, which cannot be covered by  existing papers.
Secondly, we present more general estimates and offers greater flexibility that encompass more general multiplicative noise on the fast component.

\end{remark}

\section{Stochastic controlled equations}
\setcounter{equation}{0}
 \setcounter{definition}{0}
In this section, we introduce the stochastic controlled equation associated with (\ref{aimequation}) in the weak convergence analysis, and derive its necessary a priori estimates.

\subsection{Some energy estimates}
In view of Lemma \ref{Lem2.1} and the Yamada-Watanabe theorem in infinite dimensions, there exists a Borel-measurable function
$$\mathcal{G}^{\varepsilon}:C([0,T];\mathscr{U}_0)\to C([0,T];\mathbb{H}_1)$$
such that we have the representation
$X^{\varepsilon,\delta}=\mathcal{G}^{\varepsilon}(W_\cdot),~\mathbb P\text{-a.s.}.$
Then for any $\phi^{\varepsilon} \in \mathcal{A}_{M}$, by means of Girsanov theorem, it is clear that
$$X^{\phi^{\varepsilon}}:=\mathcal{G}^{\varepsilon}\left(W .+\frac{1}{\sqrt{\varepsilon}} \int_{0}^{\cdot} \phi_{s}^{\varepsilon} d s\right)$$
is the first component of solution $(X^{\phi^{\varepsilon}},Y^{\phi^{\varepsilon}})$ to the following stochastic controlled equation
\begin{eqnarray}\label{controlequation}
\left\{ \begin{aligned}
  &dX_{t}^{\phi^{\varepsilon}}=[\mathfrak{A}_{1}(X_{t}^{\phi^{\varepsilon}})+\mathfrak{f}(X_{t}^{\phi^{\varepsilon}}, Y_{t}^{\phi^{\varepsilon}})] d t
  +\mathfrak{B}_{1}(X_{t}^{\phi^{\varepsilon}})P_1\phi^\varepsilon_tdt+\sqrt{\varepsilon}\mathfrak{B}_{1}(X_{t}^{\phi^{\varepsilon}}) d W_{t}^{1}, \\
  &d Y_{t}^{\phi^{\varepsilon}}=\frac{1}{\delta} \mathfrak{A}_{2}(X_{t}^{\phi^{\varepsilon}}, Y_{t}^{\phi^{\varepsilon}}) d t+\frac{1}{\sqrt{\delta\varepsilon}}\mathfrak{B}_{2}(X_{t}^{\phi^{\varepsilon}}, Y_{t}^{\phi^{\varepsilon}})P_2\phi^\varepsilon_tdt
  +\frac{1}{\sqrt{\delta}} \mathfrak{B}_{2}(X_{t}^{\phi^{\varepsilon}}, Y_{t}^{\phi^{\varepsilon}}) d W_{t}^{2}, \\
  &X_{0}^{\phi^{\varepsilon}}=x\in \mathbb{H}_1,~~Y_{0}^{\phi^{\varepsilon}}=y\in \mathbb{H}_2.
\end{aligned}\right.
\end{eqnarray}

We first derive some improved energy estimates compared with \cite{HLL}, which is of great importance in the verification of the criterion (a).
\begin{lemma}\label{controlestimate}
For any $T>0$, $p\geq 1$ and $\phi^\varepsilon\in\mathcal A_M,M<\infty$,  there exists a  constant $C_{p,T,M}>0$ such that for any $x\in \mathbb{H}_1,~y\in \mathbb{H}_2$ and $\varepsilon,\delta>0$ small enough,
\begin{equation}\label{conslowestimate}
\mathbb E\Bigg\{\sup_{t\in[0,T]}\|X^{\phi^\varepsilon}_t\|_{\mathbb{H}_1}^{2p}+\Big( \int_0^T\|X^{\phi^\varepsilon}_t\|_{\mathbb{V}_1}^{\alpha_1}dt\Big)^p\Bigg\} \leq C_{p,T,M}(1+\|x\|_{\mathbb{H}_1}^{2p}+\|y\|_{\mathbb{H}_2}^{2p}),
\end{equation}
\begin{equation*}
\mathbb E\Big(\int_0^T\|Y^{\phi^\varepsilon}_t\|_{\mathbb{H}_2}^2dt\Big)^p\leq C_{p,T,M}(1+\|x\|_{\mathbb{H}_1}^{2p}+\|y\|_{\mathbb{H}_2}^{2p}).
\end{equation*}
\end{lemma}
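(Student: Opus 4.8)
The plan is to derive the slow-component estimate \eqref{conslowestimate} by applying Itô's formula to $\|X_t^{\phi^\varepsilon}\|_{\mathbb{H}_1}^2$, then raising to the $p$-th power, and to treat the fast-component estimate as a consequence of a separate Itô computation for $\|Y_t^{\phi^\varepsilon}\|_{\mathbb{H}_2}^2$ integrated in time. I would begin with the fast component, since its bound feeds into the $\mathfrak{f}$ term appearing in the slow equation. Apply Itô's formula to $\|Y_t^{\phi^\varepsilon}\|_{\mathbb{H}_2}^2$: using coercivity $({\mathbf H}{\mathbf 3})$ on the drift term $\tfrac1\delta\,{}_{\mathbb{V}_2^*}\langle\mathfrak{A}_2(X^{\phi^\varepsilon},Y^{\phi^\varepsilon}),Y^{\phi^\varepsilon}\rangle_{\mathbb{V}_2}$, the growth bound on $\mathfrak{B}_2$ from $({\mathbf H}{\mathbf 4})$ (which gives $\|\mathfrak{B}_2(X,Y)\|_{L_2}^2 \leq C(1+\|X\|_{\mathbb{H}_1}^2)$ uniformly in $Y$), and Cauchy--Schwarz plus Young's inequality on the control term $\tfrac1{\sqrt{\delta\varepsilon}}\langle\mathfrak{B}_2(X^{\phi^\varepsilon},Y^{\phi^\varepsilon})P_2\phi^\varepsilon_t,Y^{\phi^\varepsilon}\rangle_{\mathbb{H}_2}$, one obtains a differential inequality of the form
\begin{equation*}
d\|Y_t^{\phi^\varepsilon}\|_{\mathbb{H}_2}^2 \leq \Big[\frac{C}{\delta}\|Y_t^{\phi^\varepsilon}\|_{\mathbb{H}_2}^2 - \frac{\eta_2}{\delta}\|Y_t^{\phi^\varepsilon}\|_{\mathbb{V}_2}^{\alpha_2} + \frac{C}{\delta}(1+\|X_t^{\phi^\varepsilon}\|_{\mathbb{H}_1}^2) + \frac{C}{\sqrt{\delta\varepsilon}}(1+\|X_t^{\phi^\varepsilon}\|_{\mathbb{H}_1})\|\phi^\varepsilon_t\|_{\mathscr{U}}\,\|Y_t^{\phi^\varepsilon}\|_{\mathbb{H}_2}\Big]dt + dM_t,
\end{equation*}
with $M_t$ the martingale from the stochastic integral. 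The $\tfrac1\delta$-dilation and the worse $\tfrac1{\sqrt{\delta\varepsilon}}$ scaling on the control term are the source of the main difficulty; handling them is the crux below.

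The strategy for the fast estimate is to exploit the negative $-\tfrac{\eta_2}{\delta}\|Y\|_{\mathbb{V}_2}^{\alpha_2}$ term and, after using $\|Y\|_{\mathbb{H}_2}^2 \le C + C\|Y\|_{\mathbb{V}_2}^{\alpha_2}$ (since $\mathbb{V}_2 \subset \mathbb{H}_2$ compactly, hence boundedly) together with the fact that $-\tfrac{\eta_2}{\delta}\|Y\|_{\mathbb{V}_2}^{\alpha_2} + \tfrac{C}{\delta}\|Y\|_{\mathbb{H}_2}^2 \le -\tfrac{c}{\delta}\|Y\|_{\mathbb{H}_2}^2 + \tfrac{C}{\delta}$ for some $c>0$ when $\|Y\|$ is not small — or more robustly, absorbing via Young's inequality $\|Y\|_{\mathbb{H}_2}^2 \le \epsilon\|Y\|_{\mathbb{V}_2}^{\alpha_2} + C_\epsilon$ — to obtain an exponential-decay-in-time structure $d\|Y_t\|_{\mathbb{H}_2}^2 \le -\tfrac{c}{\delta}\|Y_t\|_{\mathbb{H}_2}^2 dt + (\text{source})\,dt + dM_t$. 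The control term is split by Young's inequality as $\tfrac{C}{\sqrt{\delta\varepsilon}}(1+\|X_t\|_{\mathbb{H}_1})\|\phi_t^\varepsilon\|_{\mathscr{U}}\|Y_t\|_{\mathbb{H}_2} \le \tfrac{c}{2\delta}\|Y_t\|_{\mathbb{H}_2}^2 + \tfrac{C}{\varepsilon}(1+\|X_t\|_{\mathbb{H}_1}^2)\|\phi_t^\varepsilon\|_{\mathscr{U}}^2$, so the bad factor moves onto a term that is integrable in $t$ because $\phi^\varepsilon \in \mathcal A_M$ gives $\int_0^T\|\phi_t^\varepsilon\|_{\mathscr{U}}^2 dt \le M$; here one needs the slow estimate for $\sup_t\|X_t^{\phi^\varepsilon}\|_{\mathbb{H}_1}^2$ already in hand, which is why the two bounds must be proved jointly (e.g., by first establishing the slow bound modulo a term involving $\int_0^T\|Y_t\|_{\mathbb{H}_2}^2 dt$ via $({\mathbf A}{\mathbf 5})$, then closing the loop). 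After integrating the Grönwall-type inequality over $[0,T]$, the $\int_0^T\|Y_t^{\phi^\varepsilon}\|_{\mathbb{H}_2}^2 dt$ bound follows since the decay coefficient $\tfrac{c}{\delta}$ multiplies precisely this integral; one then raises to the $p$-th power and takes expectations, controlling the martingale part by Burkholder--Davis--Gundy together with $({\mathbf H}{\mathbf 4})$.

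For the slow component, apply Itô to $\|X_t^{\phi^\varepsilon}\|_{\mathbb{H}_1}^2$: coercivity $({\mathbf A}{\mathbf 3})$ yields $2\,{}_{\mathbb{V}_1^*}\langle\mathfrak{A}_1(X),X\rangle_{\mathbb{V}_1} + \varepsilon\|\mathfrak{B}_1(X)\|_{L_2}^2 \le -\eta_1\|X\|_{\mathbb{V}_1}^{\alpha_1} + C(1+\|X\|_{\mathbb{H}_1}^2)$ (absorbing the extra $\varepsilon$ factor for $\varepsilon$ small), the term $2\langle\mathfrak{f}(X,Y),X\rangle_{\mathbb{H}_1}$ is bounded by $C(1+\|X\|_{\mathbb{H}_1}^2+\|Y\|_{\mathbb{H}_2}^2)$ using the Lipschitz/linear-growth consequence of $({\mathbf A}{\mathbf 5})$, and the control term $2\langle\mathfrak{B}_1(X)P_1\phi^\varepsilon_t,X\rangle_{\mathbb{H}_1}$ is handled by $\|\mathfrak{B}_1(X)\|_{L_2} \le C(1+\|X\|_{\mathbb{H}_1})$ (from \eqref{growthB1} and linear growth) and Young: $\le C(1+\|X\|_{\mathbb{H}_1}^2)(1+\|\phi_t^\varepsilon\|_{\mathscr{U}})$. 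This gives $d\|X_t\|_{\mathbb{H}_1}^2 + \eta_1\|X_t\|_{\mathbb{V}_1}^{\alpha_1}dt \le C(1+\|X_t\|_{\mathbb{H}_1}^2)(1+\|\phi_t^\varepsilon\|_{\mathscr{U}}^2)dt + C\|Y_t\|_{\mathbb{H}_2}^2 dt + 2\sqrt{\varepsilon}\langle\mathfrak{B}_1(X_t)dW_t^1,X_t\rangle_{\mathbb{H}_1}$. Taking $\sup_{t\le s}$, then the $p$-th power, then expectations; the martingale term is dominated by BDG and absorbed (a standard $\tfrac12\mathbb{E}\sup_{t\le s}\|X_t\|_{\mathbb{H}_1}^{2p}$ absorption); since $\exp(C\int_0^T(1+\|\phi_t^\varepsilon\|_{\mathscr{U}}^2)dt) \le e^{C(T+M)}$ is a deterministic constant, a stochastic Grönwall argument (Grönwall applied pathwise to the random coefficient $1+\|\phi_t^\varepsilon\|_{\mathscr{U}}^2$, then expectation) closes the slow estimate, with the $\mathbb{E}(\int_0^T\|Y_t\|_{\mathbb{H}_2}^2dt)^p$ contribution bounded by the fast estimate. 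The main obstacle is the circular dependence between the two estimates combined with the singular $\tfrac1{\sqrt{\delta\varepsilon}}$ prefactor on the fast control term: one must be careful that the constant produced does not blow up as $\varepsilon,\delta\to0$, which works precisely because the $\tfrac1\varepsilon$ factor lands on $(1+\sup_t\|X_t\|_{\mathbb{H}_1}^2)\int_0^T\|\phi_t^\varepsilon\|_{\mathscr{U}}^2dt$ — a quantity whose expectation is already controlled uniformly — while the compensating decay $\tfrac1\delta$ multiplies $\int_0^T\|Y_t\|_{\mathbb{H}_2}^2dt$, so that the factors of $\delta$ cancel exactly and only $\varepsilon^{-1}$ times bounded quantities remain, which is harmless since we extract the time-integral of $\|Y\|_{\mathbb{H}_2}^2$ (not a supremum) and the $\delta^{-1}$ in front of it is what we use.
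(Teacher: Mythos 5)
Your overall architecture coincides with the paper's: It\^o for $\|Y^{\phi^\varepsilon}_t\|_{\mathbb{H}_2}^2$, a Young-inequality split of the $\tfrac{1}{\sqrt{\delta\varepsilon}}$ control term into $\tfrac{\gamma}{\delta}\|Y\|_{\mathbb{H}_2}^2+\tfrac{C}{\varepsilon}(1+\|X\|_{\mathbb{H}_1}^2)\|\phi^\varepsilon\|_{\mathscr U}^2$, extraction of $\int_0^T\|Y_t\|_{\mathbb{H}_2}^2dt$ from the negative dissipative term using $\|Y_t\|_{\mathbb{H}_2}^2\ge 0$, then It\^o plus $({\mathbf A}{\mathbf 3})$, $({\mathbf A}{\mathbf 5})$, BDG and Gronwall for the slow component, closing the circular dependence via the smallness of $\delta/\varepsilon$. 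That matches the paper step for step.

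However, there is one genuine gap: the source of the strong dissipativity $-\tfrac{\lambda}{\delta}\|Y\|_{\mathbb{H}_2}^2$. You propose to obtain it from the coercivity $({\mathbf H}{\mathbf 3})$ together with the embedding $\mathbb{V}_2\subset\mathbb{H}_2$, via $\|Y\|_{\mathbb{H}_2}^2\le \epsilon\|Y\|_{\mathbb{V}_2}^{\alpha_2}+C_\epsilon$. This inequality is false whenever $\alpha_2<2$ (the hypothesis only requires $\alpha_2>1$), and even for $\alpha_2=2$ it requires the coercivity constant $C$ in $({\mathbf H}{\mathbf 3})$ to be dominated by $\eta_2$ times the embedding constant, which is not assumed. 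Since $({\mathbf H}{\mathbf 3})$ carries a \emph{positive} term $+C\|v\|_{\mathbb{H}_2}^2$ that gets multiplied by $\tfrac1\delta$, failing to convert it into a negative one would force a Gronwall factor of order $e^{CT/\delta}$ and the whole estimate would blow up as $\delta\to0$. The paper instead derives the needed inequality
\begin{equation*}
2\,_{\mathbb{V}_2^*}\langle \mathfrak{A}_2(u,v),v\rangle_{\mathbb{V}_2}\le -\lambda\|v\|_{\mathbb{H}_2}^2+C(1+\|u\|_{\mathbb{H}_1}^2),\qquad \lambda\in(0,\kappa),
\end{equation*}
from the \emph{strict monotonicity} condition $({\mathbf H}{\mathbf 2})$ (comparing $v$ with $0$ and absorbing the resulting $\|v\|_{\mathbb{V}_2}$-terms with the help of $({\mathbf H}{\mathbf 3})$--$({\mathbf H}{\mathbf 4})$, as in \cite[Theorem 4.3.8]{liu2015stochastic}); the constant $\kappa$ from $({\mathbf H}{\mathbf 2})$ is precisely what makes the argument uniform in $\delta$. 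Replacing your embedding argument by this derivation repairs the proof; the remainder of your plan is sound.
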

\begin{proof}
Applying It\^{o}'s formula, we can get that
\begin{eqnarray}\label{con1}
\!\!\!\!\!\!\!\!&&\|Y^{\phi^\varepsilon}_t\|_{\mathbb{H}_2}^2
\nonumber\\
=\!\!\!\!\!\!\!\!&&\|y\|_{\mathbb{H}_2}^2+\frac {1}{\delta}\int_0^t \big(2\,_{\mathbb{V}_2^*}\langle \mathfrak{A}_2(X^{\phi^\varepsilon}_s,Y^{\phi^\varepsilon}_s),Y^{\phi^\varepsilon}_s\rangle_{\mathbb{V}_2}
+\|\mathfrak{B}_2(X^{\phi^\varepsilon}_s,Y^{\phi^\varepsilon}_s)\|_{L_2(\mathbb{U}_2,\mathbb{H}_2)}^2\big)ds
\nonumber\\
\!\!\!\!\!\!\!\!&&+\frac 2{\sqrt{\varepsilon\delta}}\int_0^t\langle \mathfrak{B}_2(X^{\phi^\varepsilon}_s,Y^{\phi^\varepsilon}_s)P_2\phi^\varepsilon_s,Y^{\phi^\varepsilon}_s\rangle_{\mathbb{H}_2}ds
+\frac 2{\sqrt{\delta}}\int_0^t\langle \mathfrak{B}_2(X^{\phi^\varepsilon}_s,Y^{,\phi^\varepsilon}_s)dW^2_s,Y^{\phi^\varepsilon}_s\rangle_{\mathbb{H}_2}.
\end{eqnarray}
By similar arguments as in the proof of \cite[Theorem 4.3.8]{liu2015stochastic}, the following claim can be obtained: For any $u\in \mathbb{H}_1,~v\in \mathbb{V}_2,$ there is a constant $\lambda \in (0,\kappa)$ such that
\begin{equation}\label{e1}
2\,_{\mathbb{V}_2^*}\langle \mathfrak{A}_2(u,v),v\rangle_{\mathbb{V}_2}\leq -\lambda\|v\|_{\mathbb{H}_2}^2+C(1+\|u\|_{\mathbb{H}_1}^2).
\end{equation}
For the third term of the right-hand side of (\ref{con1}), using Young's inequality we have
\begin{eqnarray}\label{con3}
\!\!\!\!\!\!\!\!&&\frac 2{\sqrt{\varepsilon\delta}}\int_0^t\langle \mathfrak{B}_2(X^{\phi^\varepsilon}_s,Y^{\phi^\varepsilon}_s)P_2\phi^\varepsilon_s,Y^{\phi^\varepsilon}_s\rangle_{\mathbb{H}_2}ds\nonumber\\
\leq\!\!\!\!\!\!\!\!&& \frac {\gamma}{\delta}\int_0^t\|Y^{\phi^\varepsilon}_s\|_{\mathbb{H}_2}^2ds+\frac C\varepsilon\int_0^t\|\mathfrak{B}_2(X^{\phi^\varepsilon}_s,Y^{\phi^\varepsilon}_s)\|_{L_2(\mathbb{U}_2,\mathbb{H}_2)}^2\|P_2\|^2\|\phi^\varepsilon_s\|^2_\mathscr{U}ds\nonumber\\
\leq\!\!\!\!\!\!\!\!&& \frac {\gamma}{\delta}\int_0^t\|Y^{\phi^\varepsilon}_s\|_{\mathbb{H}_2}^2ds+\frac C\varepsilon\int_0^t[(1+\|X^{\phi^\varepsilon}_s\|^2_{\mathbb{H}_1})\|\phi^\varepsilon_s\|^2_\mathscr{U}]ds,
\end{eqnarray}
where we choose $\gamma\in(0,\lambda)$ and used the condition $({\mathbf{H}}{\mathbf{4}})$ in the last step.
Substituting (\ref{e1})-(\ref{con3}) into (\ref{con1}) and applying  $({\mathbf{H}}{\mathbf{4}})$ again yields that
\begin{eqnarray*}
\|Y^{\phi^\varepsilon}_t\|_{\mathbb{H}_2}^2\leq\!\!\!\!\!\!\!\!&& \|y\|_{\mathbb{H}_2}^2-\frac {\lambda-\gamma}{\delta}\int_0^t\|Y^{\phi^\varepsilon}_s\|_{\mathbb{H}_2}^2ds+\frac C \delta\int_0^t(1+\|X^{\phi^\varepsilon}_s\|_{\mathbb{H}_1}^2)ds\\
\!\!\!\!\!\!\!\!&&+\frac C\varepsilon\int_0^t[(1+\|X^{\phi^\varepsilon}_s\|^2_{\mathbb{H}_1})\|\phi^\varepsilon_s\|^2_\mathscr{U}]ds+\frac 2{\sqrt{\delta}}\Big|\int_0^t\langle \mathfrak{B}_2(X^{\phi^\varepsilon}_s,Y^{,\phi^\varepsilon}_s)dW^2_s,Y^{\phi^\varepsilon}_s\rangle_{\mathbb{H}_2}\Big|.
\end{eqnarray*}
Thus, it follows that
\begin{eqnarray*}
\int_0^t\|Y^{\phi^\varepsilon}_s\|_{\mathbb{H}_2}^2ds\leq\!\!\!\!\!\!\!\!&&C\|y\|_{\mathbb{H}_2}^2+C\int_0^t(1+\|X^{\phi^\varepsilon}_s\|_{\mathbb{H}_1}^2)ds\\
\!\!\!\!\!\!\!\!&&+C\Big(\frac \delta\varepsilon\Big)\int_0^t[(1+\|X^{\phi^\varepsilon}_s\|^2_{\mathbb{H}_1})\|\phi^\varepsilon_s\|^2_\mathscr{U}]ds\\
\!\!\!\!\!\!\!\!&&+C\sqrt{\delta}\Big|\int_0^t\langle \mathfrak{B}_2(X^{\phi^\varepsilon}_s,Y^{,\phi^\varepsilon}_s)dW^2_s,Y^{\phi^\varepsilon}_s\rangle_{\mathbb{H}_2}\Big|.
\end{eqnarray*}
Using Burkholder-Davis-Gundy inequality and  $({\mathbf{H}}{\mathbf{4}})$, we derive
\begin{eqnarray*}
\!\!\!\!\!\!\!\!&&\mathbb{E}\Big(\int_0^T \|Y^{\phi^\varepsilon}_t\|_{\mathbb{H}_2}^2dt\Big)^p\\
\leq\!\!\!\!\!\!\!\!&& C_{p}\|y\|_{\mathbb{H}_2}^{2p}+C_{p,T,M}\Big(\frac \delta\varepsilon\Big)^p\mathbb{E}\Big[\sup_{t\in[0,T]}(1+\|X^{\phi^\varepsilon}_t\|^{2p}_{\mathbb{H}_1})\Big]
+C_p\delta^{\frac{p}{2}}\mathbb{E}\Big(\int_0^T \|X^{\phi^\varepsilon}_t\|_{\mathbb{H}_1}^2dt\Big)^p\\
\!\!\!\!\!\!\!\!&&+C_p\mathbb{E}\Big(\int_0^T \|\mathfrak{B}_2(X^{\phi^\varepsilon}_s,Y^{,\phi^\varepsilon}_s)\|_{L_2(\mathbb{U}_2,\mathbb{H}_2)}^2\|Y^{\phi^\varepsilon}_t\|_{\mathbb{H}_2}^2dt\Big)^{\frac{p}{2}}\\
\leq\!\!\!\!\!\!\!\!&& C_{p}\|y\|_{\mathbb{H}_2}^{2p}+C_{p,T,M}\Big(\frac \delta\varepsilon\Big)^p\mathbb{E}\Big[\sup_{t\in[0,T]}(1+\|X^{\phi^\varepsilon}_t\|^{2p}_{\mathbb{H}_1})\Big]
+C_p\mathbb{E}\Big(\int_0^T \|X^{\phi^\varepsilon}_t\|_{\mathbb{H}_1}^2dt\Big)^p\\
\!\!\!\!\!\!\!\!&&+C_p\delta^{\frac{p}{2}}\mathbb{E}\Big(\int_0^T (1+\|X^{\phi^\varepsilon}_t\|^2_{\mathbb{H}_1})\|Y^{\phi^\varepsilon}_t\|_{\mathbb{H}_2}^2dt\Big)^{\frac{p}{2}}\\
\leq\!\!\!\!\!\!\!\!&& C_{p}\|y\|_{\mathbb{H}_2}^{2p}+C_{p,T,M}\Big[\Big(\frac \delta\varepsilon\Big)^p+\delta^p\Big]\mathbb{E}\Big[\sup_{t\in[0,T]}(1+\|X^{\phi^\varepsilon}_t\|^{2p}_{\mathbb{H}_1})\Big]\\
\!\!\!\!\!\!\!\!&&+C_p\mathbb{E}\Big(\int_0^T \|X^{\phi^\varepsilon}_t\|_{\mathbb{H}_1}^2dt\Big)^p+\frac{1}{2} \mathbb{E}\Big(\int_0^T \|Y^{\phi^\varepsilon}_t\|_{\mathbb{H}_2}^2dt\Big)^p,
\end{eqnarray*}
where we used Young's inequality in the last step.
Hence, we can get
\begin{eqnarray}\label{conY}
\mathbb{E}\Big(\int_0^T \|Y^{\phi^\varepsilon}_t\|_{\mathbb{H}_2}^2dt\Big)^p\leq \!\!\!\!\!\!\!\!&& C_{p,T,M}(1+\|y\|_{\mathbb{H}_2}^{2p})+C_p\mathbb{E}\Big(\int_0^T \|X^{\phi^\varepsilon}_t\|_{\mathbb{H}_1}^2dt\Big)^p\nonumber\\
\!\!\!\!\!\!\!\!&&+C_{p,T,M}\Big[\Big(\frac \delta\varepsilon\Big)^p+\delta^p\Big]\mathbb{E}\Big[\sup_{t\in[0,T]}\|X^{\phi^\varepsilon}_t\|^{2p}_{\mathbb{H}_1}\Big].
\end{eqnarray}

Now, we need to estimate $\|X^{\phi^\varepsilon}_t\|^{2p}_{\mathbb{H}_1}$. Applying It\^{o}'s formula for $\|X^{\phi^\varepsilon}_t\|^{2}_{\mathbb{H}_1}$, $t\in[0,T]$, we have
\begin{eqnarray}\label{con5}
\!\!\!\!\!\!\!\!&& \|X_{t}^{\phi^{\varepsilon}}\|_{\mathbb{H}_{1}}^{2} \nonumber\\
= \!\!\!\!\!\!\!\!&& \|x\|_{\mathbb{H}_{1}}^{2}+2 \int_{0}^{t}\,_{\mathbb{V}_1^*}\langle \mathfrak{A}_1(X_{s}^{\phi^{\varepsilon}}), X_{s}^{\phi^{\varepsilon}}\rangle_{\mathbb{V}_{1}} d s+2 \int_{0}^{t}\langle \mathfrak{f}(X_{s}^{\phi^{\varepsilon}}, Y_{s}^{\phi^{\varepsilon}}), X_{s}^{\phi^{\varepsilon}}\rangle_{\mathbb{H}_{1}}ds\nonumber\\
\!\!\!\!\!\!\!\!&&+2 \int_{0}^{t}\langle \mathfrak{B}_{1}(X_{s}^{\phi^{\varepsilon}})P_1 \phi_{s}^{\varepsilon}, X_{s}^{\phi^{\varepsilon}}\rangle_{\mathbb{H}_{1}} d s+\varepsilon \int_{0}^{t}\|\mathfrak{B}_{1}(X_{s}^{\phi^{\varepsilon}})\|_{L_{2}(\mathbb{U}_1, \mathbb{H}_{1})}^{2} d s \nonumber\\
\!\!\!\!\!\!\!\!&& +2 \sqrt{\varepsilon} \int_{0}^{t}\langle \mathfrak{B}_{1}(X_{s}^{\phi^{\varepsilon}}) d W^1_{s}, X_{s}^{\phi^{\varepsilon}}\rangle_{\mathbb{H}_{1}}
 \nonumber\\
=:\!\!\!\!\!\!\!\!&&\|x\|_{\mathbb{H}_{1}}^{2}+I(t)+II(t)+III(t)+IV(t)+2 \sqrt{\varepsilon} \int_{0}^{t}\langle \mathfrak{B}_{1}(X_{s}^{\phi^{\varepsilon}}) d W^1_{s}, X_{s}^{\phi^{\varepsilon}}\rangle_{\mathbb{H}_{1}}.
\end{eqnarray}
By condition $({\mathbf{A}}{\mathbf{3}})$ and $\varepsilon$ small enough,  we obtain
\begin{equation}\label{con6}
I(t)+IV(t)\leq -\eta_1\int_0^t \|X^{\phi^\varepsilon}_s\|_{\mathbb{V}_1}^{\alpha_1} ds+C\int_0^t \big(1+\|X^{\phi^\varepsilon}_s\|_{\mathbb{H}_1}^2\big)ds.
\end{equation}
According to (\ref{fLipschitz})  and using Young's inequality, we have
\begin{equation}\label{con7}
II(t)\leq C\int_{0}^{t}\big(1+\|X^{\phi^\varepsilon}_s\|_{\mathbb{H}_1}^2+\|Y^{\phi^\varepsilon}_s\|_{\mathbb{H}_1}^2\big)ds.
\end{equation}
Using H\"older's inequality and Young's inequality, the fourth term of the right-hand side of (\ref{con5}) are controlled by
\begin{eqnarray}\label{con8}
III(t)
\leq\!\!\!\!\!\!\!\!&&\frac{1}{2} \Big [\sup_{s\in[0,t]}\|X^{\phi^\varepsilon}_s\|_{\mathbb{H}_1}^2 \Big ]+C\Big (\int_{0}^{t} \| \mathfrak{B}_{1}(X_{s}^{\phi^{\varepsilon}}) \|_{L_{2}(\mathbb{U}_1, \mathbb{H}_{1})}\|P_1\|\|\phi^\varepsilon_s\|_\mathscr{U}ds\Big )^2\nonumber\\
\leq\!\!\!\!\!\!\!\!&&\frac{1}{2} \Big [\sup_{s\in[0,t]}\|X^{\phi^\varepsilon}_s\|_{\mathbb{H}_1}^2 \Big ]+C\Big (\int_{0}^{t} \| \mathfrak{B}_{1}(X_{s}^{\phi^{\varepsilon}}) \|_{L_{2}(\mathbb{U}_1, \mathbb{H}_{1})}^2ds\Big)\Big (\int_{0}^{t}\|\phi^\varepsilon_s\|^2_\mathscr{U}ds\Big)\nonumber\\
\leq\!\!\!\!\!\!\!\!&&\frac{1}{2} \Big [\sup_{s\in[0,t]}\|X^{\phi^\varepsilon}_s\|_{\mathbb{H}_1}^2 \Big ]+C\Big (\int_{0}^{t} (1+\|X^{\phi^\varepsilon}_s\|_{\mathbb{H}_1}^2)ds\Big)\Big (\int_{0}^{t}\|\phi^\varepsilon_s\|^2_\mathscr{U}ds\Big),
\end{eqnarray}
where the last step used condition $({\mathbf{A}}{\mathbf{5}})$. Then substituting (\ref{con6})-(\ref{con8}) into (\ref{con5}) and by the fact $\phi^{\varepsilon} \in \mathcal{A}_{M},$  we deduce
\begin{eqnarray*}
\!\!\!\!\!\!\!\!&&\Big (\sup_{t\in[0,T]}\|X^{\phi^\varepsilon}_t\|_{\mathbb{H}_1}^2+\eta_1\int_{0}^{T}\|X^{\phi^\varepsilon}_t\|_{\mathbb{V}_1}^{\alpha_1} dt\Big )^p\\
\leq\!\!\!\!\!\!\!\!&& C_{p,T,M}(1+\|x\|_{\mathbb{H}_1}^{2p})+C_{p,M}\Big (\int_{0}^{T}\|X^{\phi^\varepsilon}_t\|_{\mathbb{H}_1}^2dt\Big )^p+C_p\Big (\int_{0}^{T}\|Y^{\phi^\varepsilon}_t\|_{\mathbb{H}_2}^2dt\Big )^p\\
\!\!\!\!\!\!\!\!&&+C_p\varepsilon^{\frac{p}{2}}\Big[\sup_{t\in[0,T]}\Big| \int_{0}^{t}\langle \mathfrak{B}_{1}(X_{s}^{\phi^{\varepsilon}}) d W^1_{s}, X_{s}^{\phi^{\varepsilon}}\rangle_{\mathbb{H}_{1}}\Big|^p\Big].
\end{eqnarray*}
Taking expectation and following Burkholder-Davis-Gundy inequality, Young's inequality and condition $({\mathbf{A}}{\mathbf{5}})$, we have
\begin{eqnarray*}
\!\!\!\!\!\!\!\!&&\mathbb{E}\Big (\sup_{t\in[0,T]}\|X^{\phi^\varepsilon}_t\|_{\mathbb{H}_1}^2+\eta_1\int_{0}^{T}\|X^{\phi^\varepsilon}_t\|_{\mathbb{V}_1}^{\alpha_1} dt\Big )^p\\
\leq\!\!\!\!\!\!\!\!&& C_{p,T,M}(1+\|x\|_{\mathbb{H}_1}^{2p})+C_{p,M}\mathbb{E}\Big (\int_{0}^{T}\|X^{\phi^\varepsilon}_t\|_{\mathbb{H}_1}^2dt\Big )^p+C_p\mathbb{E}\Big (\int_{0}^{T}\|Y^{\phi^\varepsilon}_t\|_{\mathbb{H}_2}^2dt\Big )^p\\
\!\!\!\!\!\!\!\!&&+C_p\varepsilon^{\frac{p}{2}}\mathbb{E}\Big[ \int_{0}^{T}\|\mathfrak{B}_{1}(X_{t}^{\phi^{\varepsilon}})\|_{L_{2}(\mathbb{U}_1, \mathbb{H}_{1})}^{2}\|X_{t}^{\phi^{\varepsilon}}\|_{\mathbb{H}_1}^2 dt\Big]^{\frac{p}{2} }\\
\leq\!\!\!\!\!\!\!\!&& C_{p,T,M}(1+\|x\|_{\mathbb{H}_1}^{2p})+C_{p,M}\mathbb{E}\Big (\int_{0}^{T}\|X^{\phi^\varepsilon}_t\|_{\mathbb{H}_1}^2dt\Big )^p+C_p\mathbb{E}\Big (\int_{0}^{T}\|Y^{\phi^\varepsilon}_t\|_{\mathbb{H}_2}^2dt\Big )^p\\
\!\!\!\!\!\!\!\!&&+\frac{1}{2} \mathbb{E}\Big [\sup_{t\in[0,T]}\|X^{\phi^\varepsilon}_t\|_{\mathbb{H}_1}^{2p}\Big].
\end{eqnarray*}
Thus by (\ref{conY}), we take $\frac{\delta}{\varepsilon}$ and $\delta$ small enough and derive
\begin{eqnarray}\label{e2}
\!\!\!\!\!\!\!\!&&\mathbb{E}\Big (\sup_{t\in[0,T]}\|X^{\phi^\varepsilon}_t\|_{\mathbb{H}_1}^2+\eta_1\int_{0}^{T}\|X^{\phi^\varepsilon}_t\|_{\mathbb{V}_1}^{\alpha_1} dt\Big )^p
\nonumber\\
\leq\!\!\!\!\!\!\!\!&& C_{p,T,M}(1+\|x\|_{\mathbb{H}_1}^{2p}+\|y\|_{\mathbb{H}_2}^{2p})+C_{p,M}\mathbb{E}\Big (\int_{0}^{T}\|X^{\phi^\varepsilon}_t\|_{\mathbb{H}_1}^2dt\Big )^p.
\end{eqnarray}
By Gronwall's lemma we have
\begin{equation}\label{e3}
\mathbb{E}\Big [\sup_{t\in[0,T]}\|X^{\phi^\varepsilon}_t\|_{\mathbb{H}_1}^{2p}\Big]\leq C_{p,T,M}(1+\|x\|_{\mathbb{H}_1}^{2p}+\|y\|_{\mathbb{H}_2}^{2p}).
\end{equation}
Then substituting (\ref{e3}) into (\ref{e2}), it follows that
$$
\mathbb{E}\Big (\int_{0}^{T}\|X^{\phi^\varepsilon}_t\|_{\mathbb{V}_1}^{\alpha_1} dt\Big )^p
\leq C_{p,T,M}(1+\|x\|_{\mathbb{H}_1}^{2p}+\|y\|_{\mathbb{H}_2}^{2p}).
$$
Moreover, substituting (\ref{e3}) into (\ref{conY}), it follows that
$$\mathbb{E}\Big(\int_0^T \|Y^{\phi^\varepsilon}_t\|_{\mathbb{H}_2}^2dt\Big)^p\leq C_{p,T,M}(1+\|x\|_{\mathbb{H}_1}^{2p}+\|y\|_{\mathbb{H}_2}^{2p}).$$
We complete the proof.
\end{proof}

\subsection{A time increment estimate}
In order to employ the time discretization method, we give the following time increment estimates of the slow component to the controlled equation (\ref{controlequation}).

\begin{lemma}\label{condiff}
For any $T>0$, $\phi^\varepsilon\in\mathcal A_M,M<\infty$ and $x\in \mathbb{H}_1,\,y\in \mathbb{H}_2$, there exists a constant $C_{x,y,T,M}>0$ such that for any $\varepsilon,\delta,\zeta>0$ small enough, we have
$$\mathbb E\int_0^{T}\|X^{\phi^\varepsilon}_t-X^{\phi^\varepsilon}_{t(\zeta)}\|_{\mathbb{H}_1}^2dt\leq C_{x,y,T,M}\zeta,$$
where $t(\zeta):=[\frac t\zeta]\zeta$ and $[t]$ is the largest integer smaller than $t.$
\end{lemma}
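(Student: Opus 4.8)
The plan is to estimate the time increment of the slow component $X^{\phi^\varepsilon}$ in $L^2([0,T]\times\Omega;\mathbb{H}_1)$ by writing out the mild/variational increment over $[t(\zeta),t]$ term by term. From the first equation of \eref{controlequation},
\begin{equation*}
X^{\phi^\varepsilon}_t-X^{\phi^\varepsilon}_{t(\zeta)}
=\int_{t(\zeta)}^t\mathfrak{A}_1(X^{\phi^\varepsilon}_s)\,ds
+\int_{t(\zeta)}^t\mathfrak{f}(X^{\phi^\varepsilon}_s,Y^{\phi^\varepsilon}_s)\,ds
+\int_{t(\zeta)}^t\mathfrak{B}_1(X^{\phi^\varepsilon}_s)P_1\phi^\varepsilon_s\,ds
+\sqrt\varepsilon\int_{t(\zeta)}^t\mathfrak{B}_1(X^{\phi^\varepsilon}_s)\,dW^1_s,
\end{equation*}
so that $\|X^{\phi^\varepsilon}_t-X^{\phi^\varepsilon}_{t(\zeta)}\|_{\mathbb{H}_1}^2$ is bounded by four terms. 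For the drift term I would pair it (in the $\mathbb{V}_1^*$--$\mathbb{V}_1$ duality) not against itself but exploit $\|\mathfrak{A}_1(u)\|_{\mathbb{V}_1^*}\le C(1+\|u\|_{\mathbb{V}_1}^{\alpha_1-1})(\cdots)$ from $({\mathbf{A}}{\mathbf{4}})$; however since we only control $\int_0^T\|X^{\phi^\varepsilon}_t\|_{\mathbb{V}_1}^{\alpha_1}dt$ and the increment lives in $\mathbb{H}_1$ not $\mathbb{V}_1$, the cleaner route is to bound the $\mathbb{H}_1$-norm of the whole increment by also using the already-established bound $\int_0^T\|X^{\phi^\varepsilon}_t\|_{\mathbb{V}_1}^{\alpha_1}dt$; but the $\mathbb{V}_1^*$-integrals do not directly give $\mathbb{H}_1$-norms. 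The standard fix here, and what I expect the authors do, is to estimate $\mathbb{E}\int_0^T\|X^{\phi^\varepsilon}_t-X^{\phi^\varepsilon}_{t(\zeta)}\|_{\mathbb{H}_1}^2\,dt$ directly, not pointwise in $t$: integrate over $t\in[0,T]$ first, use Fubini, and for each contribution bound $\int_0^T\big\|\int_{t(\zeta)}^t\cdots ds\big\|_{\mathbb{H}_1}^2dt$.

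\emph{The drift contribution.} For the $\mathfrak{A}_1$ term, by Cauchy--Schwarz $\|\int_{t(\zeta)}^t\mathfrak{A}_1(X^{\phi^\varepsilon}_s)ds\|_{\mathbb{V}_1^*}^2\le \zeta\int_{t(\zeta)}^t\|\mathfrak{A}_1(X^{\phi^\varepsilon}_s)\|_{\mathbb{V}_1^*}^2ds$, but this is a $\mathbb{V}_1^*$-bound; to get an $\mathbb{H}_1$-bound one must instead test the increment against $X^{\phi^\varepsilon}_t-X^{\phi^\varepsilon}_{t(\zeta)}$ itself and use It\^o's formula for $\|X^{\phi^\varepsilon}_t-X^{\phi^\varepsilon}_{t(\zeta)}\|_{\mathbb{H}_1}^2$ on the interval $[t(\zeta),t]$ with $t(\zeta)$ fixed, giving
\begin{equation*}
\|X^{\phi^\varepsilon}_t-X^{\phi^\varepsilon}_{t(\zeta)}\|_{\mathbb{H}_1}^2
=2\int_{t(\zeta)}^t{}_{\mathbb{V}_1^*}\langle\mathfrak{A}_1(X^{\phi^\varepsilon}_s),X^{\phi^\varepsilon}_s-X^{\phi^\varepsilon}_{t(\zeta)}\rangle_{\mathbb{V}_1}ds+(\text{lower-order terms}),
\end{equation*}
and then split ${}_{\mathbb{V}_1^*}\langle\mathfrak{A}_1(X^{\phi^\varepsilon}_s),X^{\phi^\varepsilon}_s\rangle_{\mathbb{V}_1}$ (coercivity, $({\mathbf{A}}{\mathbf{3}})$) against $-{}_{\mathbb{V}_1^*}\langle\mathfrak{A}_1(X^{\phi^\varepsilon}_s),X^{\phi^\varepsilon}_{t(\zeta)}\rangle_{\mathbb{V}_1}\le\|\mathfrak{A}_1(X^{\phi^\varepsilon}_s)\|_{\mathbb{V}_1^*}\|X^{\phi^\varepsilon}_{t(\zeta)}\|_{\mathbb{V}_1}$, then apply $({\mathbf{A}}{\mathbf{4}})$ and Young's inequality. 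Integrating in $t$ over $[0,T]$ and using Fubini converts each $\int_0^T\int_{t(\zeta)}^t(\cdots)ds\,dt$ into $\le\zeta\int_0^T(\cdots)ds$, and the integrand is controlled by $1+\|X^{\phi^\varepsilon}_s\|_{\mathbb{V}_1}^{\alpha_1}+\|X^{\phi^\varepsilon}_s\|_{\mathbb{H}_1}^2+\|Y^{\phi^\varepsilon}_s\|_{\mathbb{H}_2}^2+\|\phi^\varepsilon_s\|_{\mathscr U}^2(1+\|X^{\phi^\varepsilon}_s\|_{\mathbb{H}_1}^2)$, whose $\mathbb{E}\int_0^T$ is finite by Lemma~\ref{controlestimate} and $\phi^\varepsilon\in\mathcal A_M$.

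\emph{The remaining contributions.} The $\mathfrak{f}$ term is handled by $({\mathbf{A}}{\mathbf{5}})$: $\|\mathfrak{f}(X^{\phi^\varepsilon}_s,Y^{\phi^\varepsilon}_s)\|_{\mathbb{H}_1}\le C(1+\|X^{\phi^\varepsilon}_s\|_{\mathbb{H}_1}+\|Y^{\phi^\varepsilon}_s\|_{\mathbb{H}_2})$, so $\int_0^T\|\int_{t(\zeta)}^t\mathfrak{f}\,ds\|_{\mathbb{H}_1}^2dt\le\zeta\int_0^T\int_{t(\zeta)}^t\|\mathfrak{f}(X^{\phi^\varepsilon}_s,Y^{\phi^\varepsilon}_s)\|_{\mathbb{H}_1}^2ds\,dt\le C\zeta^2\int_0^T(\cdots)$ after Fubini, with finite expectation by Lemma~\ref{controlestimate}. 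The control term uses Cauchy--Schwarz in $s$ with the $\|\phi^\varepsilon_s\|_{\mathscr U}^2$ weight: $\|\int_{t(\zeta)}^t\mathfrak{B}_1(X^{\phi^\varepsilon}_s)P_1\phi^\varepsilon_s\,ds\|_{\mathbb{H}_1}^2\le(\int_{t(\zeta)}^t\|\phi^\varepsilon_s\|_{\mathscr U}^2ds)(\int_{t(\zeta)}^t\|\mathfrak{B}_1(X^{\phi^\varepsilon}_s)\|_{L_2}^2\|P_1\|^2ds)$; the first factor is $\le M$ but the gain in $\zeta$ comes from the second factor, which is $\le C\zeta\,\esssup(1+\|X^{\phi^\varepsilon}\|_{\mathbb{H}_1}^2)$ only after using $({\mathbf{A}}{\mathbf{5}})$ and $\sup_t\|X^{\phi^\varepsilon}_t\|_{\mathbb{H}_1}^2$ from Lemma~\ref{controlestimate} — here one must be slightly careful and it may be cleanest to bound $\int_0^T\big(\int_{t(\zeta)}^t\|\mathfrak{B}_1\|_{L_2}^2ds\big)\big(\int_{t(\zeta)}^t\|\phi^\varepsilon_s\|_{\mathscr U}^2ds\big)dt\le\big(\sup_{t}\int_{t(\zeta)}^t\|\phi^\varepsilon_s\|_{\mathscr U}^2ds\big)\zeta\int_0^T\|\mathfrak{B}_1(X^{\phi^\varepsilon}_s)\|_{L_2}^2ds$, which gives the factor $\zeta$ times something with finite expectation; absolute continuity of $s\mapsto\int_0^s\|\phi^\varepsilon_r\|_{\mathscr U}^2dr$ makes $\sup_t\int_{t(\zeta)}^t\|\phi^\varepsilon_s\|_{\mathscr U}^2ds\le M$ usable. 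Finally the stochastic term: by the It\^o isometry $\mathbb{E}\|\sqrt\varepsilon\int_{t(\zeta)}^t\mathfrak{B}_1(X^{\phi^\varepsilon}_s)dW^1_s\|_{\mathbb{H}_1}^2=\varepsilon\,\mathbb{E}\int_{t(\zeta)}^t\|\mathfrak{B}_1(X^{\phi^\varepsilon}_s)\|_{L_2(\mathbb{U}_1,\mathbb{H}_1)}^2ds$, so $\mathbb{E}\int_0^T\|\cdots\|_{\mathbb{H}_1}^2dt\le\varepsilon\,\mathbb{E}\int_0^T\int_{t(\zeta)}^t\|\mathfrak{B}_1\|_{L_2}^2ds\,dt\le C\varepsilon\zeta\,\mathbb{E}\int_0^T(1+\|X^{\phi^\varepsilon}_s\|_{\mathbb{H}_1}^2)ds\le C_{x,y,T,M}\zeta$ for $\varepsilon$ small, using $({\mathbf{A}}{\mathbf{5}})$ and Lemma~\ref{controlestimate}. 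Collecting the four bounds yields $\mathbb{E}\int_0^T\|X^{\phi^\varepsilon}_t-X^{\phi^\varepsilon}_{t(\zeta)}\|_{\mathbb{H}_1}^2dt\le C_{x,y,T,M}\zeta$.

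The main obstacle is the drift term: because $\mathfrak{A}_1$ is only defined $\mathbb{V}_1\to\mathbb{V}_1^*$ and the increment is measured in $\mathbb{H}_1$, one cannot naively write $\|\int\mathfrak{A}_1\,ds\|_{\mathbb{H}_1}$, and one must go through It\^o's formula for the $\mathbb{H}_1$-norm of the increment on $[t(\zeta),t]$ with $t(\zeta)$ frozen, handling the pairing ${}_{\mathbb{V}_1^*}\langle\mathfrak{A}_1(X^{\phi^\varepsilon}_s),X^{\phi^\varepsilon}_{t(\zeta)}\rangle_{\mathbb{V}_1}$ via $({\mathbf{A}}{\mathbf{4}})$ and absorbing the resulting $\|X^{\phi^\varepsilon}_s\|_{\mathbb{V}_1}^{\alpha_1}$-type terms using the a priori bound in Lemma~\ref{controlestimate}; the coercivity term $-\eta_1\|X^{\phi^\varepsilon}_s\|_{\mathbb{V}_1}^{\alpha_1}$ coming from ${}_{\mathbb{V}_1^*}\langle\mathfrak{A}_1(X^{\phi^\varepsilon}_s),X^{\phi^\varepsilon}_s\rangle_{\mathbb{V}_1}$ has a favorable sign and can simply be discarded. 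Everything else is a routine application of Fubini (each single time-integral over $[t(\zeta),t]$ contributes one factor $\zeta$), the Lipschitz/growth hypotheses $({\mathbf{A}}{\mathbf{4}})$--$({\mathbf{A}}{\mathbf{5}})$, the It\^o isometry, and the uniform moment bounds already proved in Lemma~\ref{controlestimate}.
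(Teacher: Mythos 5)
Your overall strategy --- It\^o's formula for $\|X^{\phi^\varepsilon}_t-X^{\phi^\varepsilon}_{\cdot}\|_{\mathbb{H}_1}^2$ with the reference time frozen, Fubini to extract one factor $\zeta$ from each inner time integral, growth conditions $({\mathbf{A}}{\mathbf{4}})$--$({\mathbf{A}}{\mathbf{5}})$, the It\^o isometry, and the moment bounds of Lemma~\ref{controlestimate} --- is the same as the paper's, and your treatment of the $\mathfrak{f}$, control and stochastic terms is fine. You also correctly identify the drift term as the only delicate one.

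However, there is a genuine gap precisely there, caused by keeping the grid point $t(\zeta)$ as the reference time. Whether you split the pairing as you propose (coercivity plus $-{}_{\mathbb{V}_1^*}\langle\mathfrak{A}_1(X^{\phi^\varepsilon}_s),X^{\phi^\varepsilon}_{t(\zeta)}\rangle_{\mathbb{V}_1}\le\|\mathfrak{A}_1(X^{\phi^\varepsilon}_s)\|_{\mathbb{V}_1^*}\|X^{\phi^\varepsilon}_{t(\zeta)}\|_{\mathbb{V}_1}$ followed by Young) or bound the whole pairing by $\|\mathfrak{A}_1(X^{\phi^\varepsilon}_s)\|_{\mathbb{V}_1^*}\|X^{\phi^\varepsilon}_s-X^{\phi^\varepsilon}_{t(\zeta)}\|_{\mathbb{V}_1}$, you are left with a term of the form
\begin{equation*}
\int_0^T\int_{t(\zeta)}^t\|X^{\phi^\varepsilon}_{t(\zeta)}\|_{\mathbb{V}_1}^{\alpha_1}\,ds\,dt\ \le\ \zeta\int_0^T\|X^{\phi^\varepsilon}_{t(\zeta)}\|_{\mathbb{V}_1}^{\alpha_1}\,dt\ =\ \zeta\sum_k \ell_k\,\|X^{\phi^\varepsilon}_{k\zeta}\|_{\mathbb{V}_1}^{\alpha_1},
\end{equation*}
a Riemann-type sum of $\|X^{\phi^\varepsilon}\|_{\mathbb{V}_1}^{\alpha_1}$ evaluated at the fixed grid points $k\zeta$. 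The available a priori bound is only $\mathbb{E}\int_0^T\|X^{\phi^\varepsilon}_t\|_{\mathbb{V}_1}^{\alpha_1}\,dt<\infty$, i.e.\ an $L^{\alpha_1}$-in-time bound; the solution is a continuous $\mathbb{H}_1$-valued process that lies in $\mathbb{V}_1$ only for $dt$-a.e.\ $t$, so $\|X^{\phi^\varepsilon}_{k\zeta}\|_{\mathbb{V}_1}$ may be infinite, and even when finite the sum over grid points is not controlled by the time integral. Your list of terms "controlled by $1+\|X^{\phi^\varepsilon}_s\|_{\mathbb{V}_1}^{\alpha_1}+\cdots$" silently omits this contribution, which depends on $t$ through $t(\zeta)$ rather than on the integration variable $s$. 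The paper circumvents exactly this by first writing $\|X_t-X_{t(\zeta)}\|_{\mathbb{H}_1}^2\le 2\|X_t-X_{t-\zeta}\|_{\mathbb{H}_1}^2+2\|X_{t(\zeta)}-X_{t-\zeta}\|_{\mathbb{H}_1}^2$ (plus the initial piece on $[0,\zeta]$), so that the reference time $t-\zeta$ varies continuously with $t$; then the change of variables turns $\int_\zeta^T\|X^{\phi^\varepsilon}_{t-\zeta}\|_{\mathbb{V}_1}^{\alpha_1}\,dt$ into $\int_0^{T-\zeta}\|X^{\phi^\varepsilon}_u\|_{\mathbb{V}_1}^{\alpha_1}\,du$, which Lemma~\ref{controlestimate} controls. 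With that preliminary reduction your argument goes through (the paper in fact uses H\"older with exponents $\tfrac{\alpha_1}{\alpha_1-1}$ and $\alpha_1$ on the double integral rather than your coercivity/Young split, but both work once the reference time is $t-\zeta$); without it, the drift estimate does not close.
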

\begin{proof}
By (\ref{conslowestimate}), we have
\begin{eqnarray}\label{controllDifferent}
\mathbb E\int_0^T\|X^{\phi^\varepsilon}_t-X^{\phi^\varepsilon}_{t(\zeta)}\|_{\mathbb{H}_1}^2dt
=\!\!\!\!\!\!\!\!&&\mathbb E\int_0^\zeta\|X^{\phi^\varepsilon}_t-x\|_{\mathbb{H}_1}^2dt+
\mathbb E\int_\zeta^T\|X^{\phi^\varepsilon}_t-X^{\phi^\varepsilon}_{t(\zeta)}\|_{\mathbb{H}_1}^2dt\nonumber\\
\leq \!\!\!\!\!\!\!\!&&C(1+\|x\|_{\mathbb{H}_1}^2+\|y\|_{\mathbb{H}_2}^2)\zeta+2\mathbb E\int_\zeta^T\|X^{\phi^\varepsilon}_t-X^{\phi^\varepsilon}_{t-\zeta}\|_{\mathbb{H}_1}^2 dt\nonumber\\
\!\!\!\!\!\!\!\!&&+2\mathbb E\int_\zeta^T\|X^{\phi^\varepsilon}_{t(\zeta)}-X^{\phi^\varepsilon}_{t-\zeta}\|_{\mathbb{H}_1}^2 dt
\nonumber\\
=: \!\!\!\!\!\!\!\!&&C(1+\|x\|_{\mathbb{H}_1}^2+\|y\|_{\mathbb{H}_2}^2)\zeta+I+II.
\end{eqnarray}
We now estimate the term $I$. Applying It\^{o}'s formula for $\|X_{t}^{\phi^\varepsilon}-X_{t-\zeta}^{\phi^\varepsilon}\|_{\mathbb{H}_{1}}^{2}$ yields that
\begin{eqnarray}\label{condiffx}
\!\!\!\!\!\!\!\!&& \|X_{t}^{\phi^\varepsilon}-X_{t-\zeta}^{\phi^\varepsilon}\|_{\mathbb{H}_{1}}^{2} \nonumber\\
= \!\!\!\!\!\!\!\!&& 2 \int_{t-\zeta}^{t} \,_{\mathbb{V}_{1}^{*}}\langle \mathfrak{A}_1(X_{s}^{\phi^\varepsilon}), X_{s}^{\phi^\varepsilon}-X_{t-\zeta}^{\phi^\varepsilon}\rangle_{\mathbb{V}_{1}} d s +2 \int_{t-\zeta}^{t}\langle \mathfrak{f}(X_{s}^{\phi^\varepsilon}, Y_{s}^{\phi^\varepsilon}), X_{s}^{\phi^{\epsilon}}-X_{t-\zeta}^{\phi^\varepsilon}\rangle_{\mathbb{H}_{1}} d s \nonumber\\
\!\!\!\!\!\!\!\!&& +2 \int_{t-\zeta}^{t}\langle \mathfrak{B}_{1}(X_{s}^{\phi^\varepsilon})P_1\phi_{s}^{\varepsilon}, X_{s}^{\phi^\varepsilon}-X_{t-\zeta}^{\phi^\varepsilon}\rangle_{\mathbb{H}_{1}} d s+\varepsilon \int_{t-\zeta}^{t}\|\mathfrak{B}_{1}(X_{s}^{\phi^\varepsilon})\|_{L_{2}(\mathbb{U}_1, \mathbb{H}_{1})}^{2} d s \nonumber\\
\!\!\!\!\!\!\!\!&& +2 \sqrt{\varepsilon} \int_{t-\zeta}^{t}\langle \mathfrak{B}_{1}(X_{s}^{\phi^\varepsilon}) d W^1_{s}, X_{s}^{\phi^\varepsilon}-X_{t-\zeta}^{\phi^\varepsilon}\rangle_{\mathbb{H}_{1}} \nonumber\\
=: \!\!\!\!\!\!\!\!&& \sum_{i=1}^{5} I_{i}(t) .
\end{eqnarray}
We use condition $({\mathbf{A}}{\mathbf{4}})$ and H\"older's inequality to get that
\begin{eqnarray}\label{condiff1}
\!\!\!\!\!\!\!\!&& \mathbb{E}\Big[\int_{\zeta}^{T}I_{1}(t)d t\Big] \nonumber\\
\leq \!\!\!\!\!\!\!\!&& C \mathbb{E}\Big[\int_{\zeta}^{T} \int_{t-\zeta}^{t}\|\mathfrak{A}_1(X_{s}^{\phi^\varepsilon})\|_{\mathbb{V}_{1}^{*}}\|X_{s}^{\phi^\varepsilon}-X_{t-\zeta}^{\phi^\varepsilon} \|_{\mathbb{V}_{1}}  d s d t\Big] \nonumber\\
\leq \!\!\!\!\!\!\!\!&& C\Big[\mathbb{E}\Big(\int_{\zeta}^{T} \int_{t-\zeta}^{t}\|\mathfrak{A}_1(X_{s}^{\phi^\varepsilon})\|_{\mathbb{V}_{1}^{*}}^{\frac{\alpha_{1}}{\alpha_{1}-1}}  d s d t\Big)\Big]^{\frac{\alpha_{1}-1}{\alpha_{1}}} \Big[\mathbb{E}\Big(\int_{\zeta}^{T} \int_{t-\zeta}^{t}\|X_{s}^{\phi^\varepsilon}-X_{t-\zeta}^{\phi^\varepsilon}\|_{\mathbb{V}_{1}}^{\alpha_{1}}  d s d t\Big)\Big]^{\frac{1}{\alpha_{1}}} \nonumber\\
\leq \!\!\!\!\!\!\!\!&& C\Big[\zeta \mathbb{E}\Big(\int_{0}^{T}(1+\|X_{t}^{\phi^\varepsilon}\|_{\mathbb{V}_{1}}^{\alpha_{1}})(1+\|X_{t}^{\phi^\varepsilon}\|_{\mathbb{H}_{1}}^{\beta_{1}})  d t\Big)\Big]^{\frac{\alpha_{1}-1}{\alpha_{1}}}\Big[\zeta \mathbb{E} \int_{0}^{T}\|X_{t}^{\phi^\varepsilon}\|_{\mathbb{V}_{1}}^{\alpha_{1}} d t\Big]^{\frac{1}{\alpha_{1}}} \nonumber\\
\leq \!\!\!\!\!\!\!\!&& C_{T} \zeta\Big(1+\|x\|_{\mathbb{H}_{1}}^{\frac{(2\beta_1+4)(\alpha_1-1)+2}{\alpha_1}}+\|y\|_{\mathbb{H}_{2}}^{\frac{(2\beta_1+4)(\alpha_1-1)+2}{\alpha_1}}\Big),
\end{eqnarray}
where the last step is due to
\begin{eqnarray*}
\!\!\!\!\!\!\!\!&&\mathbb{E}\Big(\int_{0}^{T}(1+\|X_{t}^{\phi^\varepsilon}\|_{\mathbb{V}_{1}}^{\alpha_{1}})(1+\|X_{t}^{\phi^\varepsilon}\|_{\mathbb{H}_{1}}^{\beta_{1}})  d t\Big)\\
\leq\!\!\!\!\!\!\!\!&&C_T+\mathbb{E}\int_{0}^{T}\|X_{t}^{\phi^\varepsilon}\|_{\mathbb{V}_{1}}^{\alpha_{1}}dt+C_T\mathbb{E}\Big[\sup_{t \in [0,T]} \|X_{t}^{\phi^\varepsilon}\|_{\mathbb{H}_{1}}^{\beta_{1}}\Big]\\
\!\!\!\!\!\!\!\!&&+\mathbb{E}\Big[\Big(\sup_{t \in [0,T]} \|X_{t}^{\phi^\varepsilon}\|_{\mathbb{H}_{1}}^{\beta_{1}}\Big)\Big(\int_{0}^{T}\|X_{t}^{\phi^\varepsilon}\|_{\mathbb{V}_{1}}^{\alpha_{1}}dt\Big)\Big]\\
\leq\!\!\!\!\!\!\!\!&&C_T+\mathbb{E}\int_{0}^{T}\|X_{t}^{\phi^\varepsilon}\|_{\mathbb{V}_{1}}^{\alpha_{1}}dt+C_T\mathbb{E}\Big[\sup_{t \in [0,T]} \|X_{t}^{\phi^\varepsilon}\|_{\mathbb{H}_{1}}^{\beta_{1}}\Big]\\
\!\!\!\!\!\!\!\!&&+C\mathbb{E}\Big[\sup_{t \in [0,T]} \|X_{t}^{\phi^\varepsilon}\|_{\mathbb{H}_{1}}^{2\beta_{1}}\Big]+C\mathbb{E}\Big(\int_{0}^{T}\|X_{t}^{\phi^\varepsilon}\|_{\mathbb{V}_{1}}^{\alpha_{1}}dt\Big)^2\\
\leq \!\!\!\!\!\!\!\!&& C_{T}(1+\|x\|_{\mathbb{H}_{1}}^{2\beta_1+4}+\|y\|_{\mathbb{H}_{2}}^{2\beta_1+4}).
\end{eqnarray*}
By H\"older's inequality and (\ref{fLipschitz}), we have
\begin{eqnarray}\label{condiff2}
\!\!\!\!\!\!\!\!&& \mathbb{E}\Big[\int_{\zeta}^{T}I_{2}(t) d t\Big] \nonumber\\
\leq \!\!\!\!\!\!\!\!&&  C\Big[\mathbb{E}\Big(\int_{\zeta}^{T} \int_{t-\zeta}^{t}\|\mathfrak{f}(X_{s}^{\phi^\varepsilon}, Y_{s}^{\phi^\varepsilon})\|_{\mathbb{H}_{1}}^{2}d s d t\Big)\Big]^{\frac{1}{2}}
\Big[\mathbb{E}\Big(\int_{\zeta}^{T} \int_{t-\zeta}^{t}\|X_{s}^{\phi^\varepsilon}-X_{t-\zeta}^{\phi^\varepsilon}\|_{\mathbb{H}_{1}}^{2}  d s d t\Big)\Big]^{\frac{1}{2}} \nonumber\\
\leq \!\!\!\!\!\!\!\!&&  C\Big[\zeta \mathbb{E}\Big(\int_{0}^{T}(1+\|X_{s}^{\phi^\varepsilon}\|_{\mathbb{H}_{1}}^{2}+\|Y_{s}^{\phi^\varepsilon}\|_{\mathbb{H}_{2}}^{2}) d s\Big)\Big]^{\frac{1}{2}}\Big[\zeta \mathbb{E}\Big(\int_{0}^{T}\|X_{s}^{\phi^\varepsilon}\|_{\mathbb{H}_{1}}^{2} d s\Big)\Big]^{\frac{1}{2}} \nonumber\\
\leq \!\!\!\!\!\!\!\!&&  C_{T} \zeta(1+\|x\|_{\mathbb{H}_{1}}^{2}+\|y\|_{\mathbb{H}_{2}}^{2}) .
\end{eqnarray}
By (\ref{growthB1}) and H\"older's inequality, it follows that
\begin{eqnarray}\label{condiff3}
\!\!\!\!\!\!\!\!&&  \mathbb{E}\Big[\int_{\zeta}^{T}\big(I_{3}(t)+I_{4}(t)\big)  d t\Big] \nonumber\\
\leq \!\!\!\!\!\!\!\!&&  C\Big[\mathbb{E}\Big(\int_{\zeta}^{T} \int_{t-\zeta}^{t}\|\mathfrak{B}_{1}(X_{s}^{\phi^\varepsilon})\|_{L_{2}(\mathbb{U}_1, \mathbb{H}_{1})}^{2}\|\phi_{s}^{\varepsilon}\|_{\mathscr{U}}^{2}  d s d t\Big)\Big]^{\frac{1}{2}} \nonumber\\
\!\!\!\!\!\!\!\!&& \cdot\Big[\mathbb{E}\Big(\int_{\zeta}^{T} \int_{t-\zeta}^{t}\|X_{s}^{\phi^\varepsilon}-X_{t-\zeta}^{\phi^\varepsilon}\|_{\mathbb{H}_{1}}^{2}  d s d t\Big)\Big]^{\frac{1}{2}}
\nonumber\\
 \!\!\!\!\!\!\!\!&&
+\varepsilon\Big[\mathbb{E}\Big(\int_{\zeta}^{T} \int_{t-\zeta}^{t}\|\mathfrak{B}_{1}(X_{s}^{\phi^\varepsilon})\|_{L_{2}(\mathbb{U}_1, \mathbb{H}_{1})}^{2}dsdt\Big)\Big] \nonumber\\
\leq \!\!\!\!\!\!\!\!&&  C\Big[\zeta \mathbb{E}\Big(\int_{0}^{T}(1+\|X_{t}^{\phi^\varepsilon}\|_{\mathbb{H}_{1}}^{2})\|\phi_{t}^{\varepsilon}\|_{\mathscr{U}}^{2}  d t\Big)\Big]^{\frac{1}{2}}\Big[\zeta \mathbb{E}\Big(\int_{0}^{T}\|X_{s}^{\phi^\varepsilon}\|_{\mathbb{H}_{1}}^{2} d s\Big)\Big]^{\frac{1}{2}} \nonumber\\
\!\!\!\!\!\!\!\!&& +C_{T} \zeta \varepsilon \mathbb{E}\Big[\sup _{t \in[0, T]}\big(1+\|X_{t}^{\phi^\varepsilon}\|_{\mathbb{H}_{1}}^{2}\big)\Big]\nonumber\\
\leq \!\!\!\!\!\!\!\!&&  C_{M, T} \zeta(1+\|x\|_{\mathbb{H}_{1}}^{2}+\|y\|_{\mathbb{H}_{2}}^{2}) .
\end{eqnarray}
For the last term $I_{5}(t)$, the martingale property and Fubini's theorem imply that
\begin{equation}\label{condiff4}
\mathbb{E}\Big[\int_{\zeta}^{T}I_{5}(t)  d t\Big]
=2\sqrt{\varepsilon} \int_{\zeta}^{T}\mathbb{E}\Big[ \int_{t-\zeta}^{t}\langle \mathfrak{B}_{1}(X_{s}^{\phi^\varepsilon}) d W^1_{s}, X_{s}^{\phi^\varepsilon}-X_{t-\zeta}^{\phi^\varepsilon}\rangle_{\mathbb{H}_{1}}\Big]d t=0.
\end{equation}
Substituting (\ref{condiff1})-(\ref{condiff4}) into (\ref{condiffx}), we conclude that
\begin{equation*}
I \leq C_{M, T} \zeta\Big(1+\|x\|_{\mathbb{H}_{1}}^{\frac{(2\beta_1+4)(\alpha_1-1)+2}{\alpha_1}}+\|y\|_{\mathbb{H}_{2}}^{\frac{(2\beta_1+4)(\alpha_1-1)+2}{\alpha_1}}\Big).
\end{equation*}
Following similar arguments, the term $II$ in (\ref{controllDifferent}) is estimated by
\begin{equation*}
II \leq C_{M, T} \zeta\Big(1+\|x\|_{\mathbb{H}_{1}}^{\frac{(2\beta_1+4)(\alpha_1-1)+2}{\alpha_1}}+\|y\|_{\mathbb{H}_{2}}^{\frac{(2\beta_1+4)(\alpha_1-1)+2}{\alpha_1}}\Big).
\end{equation*}
Recalling (\ref{controllDifferent}), the proof is completed.
\end{proof}

\section{Proof of main results}
\setcounter{equation}{0}
 \setcounter{definition}{0}
In this section, we employ the time discretization scheme developed in \cite{Khasminskii1968}, the theory of pseudo-monotone operators and the technique of stopping times to establish  the criterion (a) and (b) in Lemma \ref{WeakLemma}. By doing so, we can get that the LDP holds.

\subsection{Verification of (b)}\label{Sub4.4}
This subsection is devoted to the verification of (b) in Lemma \ref{WeakLemma}, which implies that the rate function $I$ defined in (\ref{ratef}) is a good rate function.

First, we take any sequence $\{\bar{X}^{\phi^{n}}\}_{n\geq1}$ in $K_M,$ which solves (\ref{skletonequation}) and satisfies (\ref{seletonestimate}) in Appendix with $\phi^n \in S_M$ replacing $\phi$.
We can derive the following pre-compact lemma.
\begin{lemma}\label{pre}
The sequence $\{\bar{X}^{\phi^{n}}\}_{n\geq1}$ is pre-compact in $\mathbb{S}_T:=C([0, T], \mathbb{V}_1^{*}) \cap L^{\alpha_1}([0, T], \mathbb{H}_1).$
\end{lemma}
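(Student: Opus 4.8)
The plan is to establish pre-compactness of $\{\bar{X}^{\phi^n}\}_{n\ge 1}$ in $\mathbb{S}_T = C([0,T],\mathbb{V}_1^*)\cap L^{\alpha_1}([0,T],\mathbb{H}_1)$ by extracting a suitable subsequence via weak compactness and then upgrading this to strong convergence in the stated topology using a compactness/Aubin--Lions type argument. First I would record the uniform bounds available from the skeleton estimate (the analogue of \eqref{seletonestimate} in the Appendix): the sequence is bounded in $L^{\alpha_1}([0,T];\mathbb{V}_1)$, in $L^\infty([0,T];\mathbb{H}_1)$, and — using the growth condition $({\mathbf{A}}{\mathbf{4}})$ together with the Lipschitz/growth bounds on $\bar{\mathfrak{f}}$ and $\mathfrak{B}_1$ and the uniform $S_M$-bound on $\phi^n$ — the time derivatives $\tfrac{d}{dt}\bar{X}^{\phi^n}$ are bounded in $L^{\alpha_1/(\alpha_1-1)}([0,T];\mathbb{V}_1^*)$. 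Since $\mathbb{V}_1\subset\mathbb{H}_1$ is compact (Hypothesis \ref{hypo1}) and $\mathbb{H}_1\subset\mathbb{V}_1^*$ is continuous, the Aubin--Lions--Simon lemma yields a subsequence (not relabelled) converging strongly in $L^{\alpha_1}([0,T];\mathbb{H}_1)$, weakly in $L^{\alpha_1}([0,T];\mathbb{V}_1)$, and weak-$*$ in $L^\infty([0,T];\mathbb{H}_1)$ to some limit $\bar{X}$.

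Next I would handle the $C([0,T];\mathbb{V}_1^*)$ component. From the uniform bound on the time derivatives in $L^{\alpha_1/(\alpha_1-1)}([0,T];\mathbb{V}_1^*)$, the family $\{\bar{X}^{\phi^n}\}$ is equi-continuous in $\mathbb{V}_1^*$ (indeed equi-$\tfrac{1}{\alpha_1}$-Hölder, or at least uniformly absolutely continuous in time) and, being bounded in $\mathbb{H}_1\subset\subset\mathbb{V}_1^*$ pointwise in $t$, it is relatively compact in $\mathbb{V}_1^*$ for each $t$; the Arzelà--Ascoli theorem then gives convergence in $C([0,T];\mathbb{V}_1^*)$ along a further subsequence, and the limit coincides with $\bar{X}$ by uniqueness of limits in the sense of distributions. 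Combining the two gives convergence in $\mathbb{S}_T$.

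The main obstacle I anticipate is not the abstract compactness machinery but making sure the bound on $\tfrac{d}{dt}\bar{X}^{\phi^n}$ in $L^{\alpha_1/(\alpha_1-1)}([0,T];\mathbb{V}_1^*)$ is genuinely uniform in $n$: the term $\mathfrak{B}_1(\bar{X}^{\phi^n})P_1\phi^n$ must be controlled in a dual-type norm using only the $L^2([0,T];\mathscr{U})$-bound $\|\phi^n\|\le M$ together with $\|\mathfrak{B}_1(u)\|_{L_2}\le C(1+\|u\|_{\mathbb{H}_1})$ (from $({\mathbf{A}}{\mathbf{5}})$, since $\mathfrak{B}_1$ is Lipschitz with linear growth) and the $L^\infty_t\mathbb{H}_1$-bound; one checks $\mathfrak{B}_1(\bar{X}^{\phi^n})P_1\phi^n \in L^2([0,T];\mathbb{H}_1)\subset L^{\alpha_1/(\alpha_1-1)}([0,T];\mathbb{V}_1^*)$ with a bound depending only on $M$ and the skeleton estimate, using $\alpha_1>1$ so that $\alpha_1/(\alpha_1-1)\le$ the relevant exponent on a finite interval after interpolation, or handling it separately as an $\mathbb{H}_1$-valued term of bounded variation. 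A secondary technical point is that $\mathbb{S}_T$ carries the intersection topology, so I would verify pre-compactness separately in each factor $C([0,T],\mathbb{V}_1^*)$ and $L^{\alpha_1}([0,T],\mathbb{H}_1)$ and then note that a diagonal subsequence converges in both, hence in $\mathbb{S}_T$. Since $\{\bar{X}^{\phi^n}\}$ is an arbitrary sequence in $K_M$, this establishes pre-compactness of $K_M$, which is exactly what is needed to later conclude via the identification of limits that $K_M$ is in fact compact in $C([0,T];\mathbb{H}_1)$.
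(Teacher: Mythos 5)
Your strategy is viable, but the mechanism differs from the paper's. The paper never bounds the time derivative of $\bar{X}^{\phi^n}$ in a dual Lebesgue space: for the $C([0,T];\mathbb{V}_1^*)$ factor it applies Arzel\`a--Ascoli after showing that the scalar maps $t\mapsto{}_{\mathbb{V}_1^*}\langle \bar{X}^{\phi^n}_t,e\rangle_{\mathbb{V}_1}$ (with $e$ ranging over a basis of $\mathbb{V}_1$) are equi-H\"older with exponents $\frac{1}{\alpha_1}$ and $\frac{1}{2}$, using $({\mathbf{A}}{\mathbf{4}})$, $({\mathbf{A}}{\mathbf{5}})$, the Lipschitz continuity of $\bar{\mathfrak{f}}$ and $\phi^n\in S_M$; for the $L^{\alpha_1}([0,T];\mathbb{H}_1)$ factor it verifies the time-translate criterion of \cite[Theorem 5]{S}, namely $\sup_{n}\int_0^{T-h}\|\bar{X}^{\phi^n}_{t+h}-\bar{X}^{\phi^n}_t\|_{\mathbb{V}_1^*}^2\,dt\le C h$, by expanding $\|\bar{X}^{\phi^n}_{t+h}-\bar{X}^{\phi^n}_t\|_{\mathbb{H}_1}^2$ through the energy equality, in the same spirit as the increment Lemmas \ref{condiff} and \ref{MonoSkePrioLemm}. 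Your Aubin--Lions route through $\frac{d}{dt}\bar{X}^{\phi^n}$ is legitimate here precisely because the skeleton equation is deterministic (there is no stochastic integral obstructing differentiation in $\mathbb{V}_1^*$), and it is a more off-the-shelf argument; the paper's translate-based argument avoids any discussion of which $L^r$-space the derivative belongs to and directly delivers compactness in $L^{\alpha_1}$ itself.

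Two points in your write-up need tightening. First, the asserted uniform bound of $\frac{d}{dt}\bar{X}^{\phi^n}$ in $L^{\frac{\alpha_1}{\alpha_1-1}}([0,T];\mathbb{V}_1^*)$ fails as stated when $1<\alpha_1<2$, which Hypothesis \ref{hypo1} allows: then $\frac{\alpha_1}{\alpha_1-1}>2$, whereas the control term $\mathfrak{B}_1(\bar{X}^{\phi^n})P_1\phi^n$ is only square-integrable in time, so your parenthetical remark that ``$\alpha_1>1$ so that $\alpha_1/(\alpha_1-1)\le$ the relevant exponent'' has the inequality the wrong way around. The repair is harmless: the derivative is uniformly bounded in $L^{r}([0,T];\mathbb{V}_1^*)$ with $r=\min\{2,\frac{\alpha_1}{\alpha_1-1}\}>1$, which suffices both for the compactness lemma and for norm equicontinuity, $\|\bar{X}^{\phi^n}_t-\bar{X}^{\phi^n}_s\|_{\mathbb{V}_1^*}\le C|t-s|^{1-\frac{1}{r}}$. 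Second, the Aubin--Lions--Simon statement that takes only a derivative bound in $L^{r}([0,T];\mathbb{V}_1^*)$ gives relative compactness in $L^{q}([0,T];\mathbb{H}_1)$ for $q<\alpha_1$, not in $L^{\alpha_1}$ itself; to reach $L^{\alpha_1}$, as the lemma requires, add one interpolation line using the uniform bound $\sup_n\sup_{t\in[0,T]}\|\bar{X}^{\phi^n}_t\|_{\mathbb{H}_1}$ from (\ref{seletonestimate}), e.g. $\|\bar{X}^{\phi^n}-X\|_{L^{\alpha_1}([0,T];\mathbb{H}_1)}^{\alpha_1}\le C\,\|\bar{X}^{\phi^n}-X\|_{L^{q}([0,T];\mathbb{H}_1)}^{q}$, or else verify the translate condition of \cite[Theorem 5]{S} directly, as the paper does. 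With these two adjustments your proof goes through and yields the same conclusion as the paper's.
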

\begin{proof} The proof is divided into following two steps.

\textbf{Step 1}: In this step, we show that $\{\bar{X}^{\phi^{n}}\}_{n\geq1}$ is pre-compact in $C([0, T], \mathbb{V}_1^{*})$. By (\ref{se1}) in Appendix, we can choose a constant $L_0>0$ such that
\begin{equation*}
\sup_{n\geq1}\Big\{\sup\limits_{t \in[0, T]}\|\bar{X}_{t}^{\phi^{n}}\|_{\mathbb{H}_{1}}^{2}+ \displaystyle \int_{0}^{T}\|\bar{X}_{t}^{\phi^{n}}\|_{\mathbb{V}_{1}}^{\alpha_{1}} d t \Big\}=L_0.
\end{equation*}
By Arzela-Ascoli Theorem, it suffices to show that for any $e$ (an orthonormal basis of $\mathbb{V}_1$), $\{\,_{\mathbb{V}_1^*}\langle \bar{X}^{\phi^{n}},e\rangle_{\mathbb{V}_1}\}_{n\geq1}$ is equi-continuous as a family of real-valued functions. For $t,s \in [0,T],$ condition $({\mathbf{A}}{\mathbf{4}})$, $({\mathbf{A}}{\mathbf{5}})$ implies that
\begin{eqnarray*}
\!\!\!\!\!\!\!\!&&\big|\,_{\mathbb{V}_1^*}\langle \bar{X}_t^{\phi^{n}}-\bar{X}_s^{\phi^{n}}, e\rangle_{\mathbb{V}_1}\big|  \\
\leq \!\!\!\!\!\!\!\!&&\int_{s}^{t}\Big(\|\mathfrak{A}_1(\bar{X}_{r}^{\phi^{n}})\|_{\mathbb{V}_1^{*}}\|e\|_{\mathbb{V}_1} +\|\bar{\mathfrak{f}}(\bar{X}_{r}^{\phi^{n}})\|_{\mathbb{H}_1}\|e\|_{\mathbb{H}_1} +\|\mathfrak{B}_1(\bar{X}_{r}^{\phi^{n}})\|_{L_{2}(\mathbb{U}_1,\mathbb{H}_1)}\|\phi^{n}_{r}\|_\mathscr{U} \|e\|_{\mathbb{H}_1}\Big) d r \\
\leq \!\!\!\!\!\!\!\!&& C\Big(\int_{s}^{t}\|\mathfrak{A}_1(\bar{X}_{r}^{\phi^{n}})\|_{\mathbb{V}^{*}}^{\frac{\alpha_1}{\alpha_1-1}} d r\Big)^{\frac{\alpha_1-1}{\alpha_1}}|t-s|^{\frac{1}{\alpha_1}}+C\int_{s}^{t}(1+\|\bar{X}_{r}^{\phi^{n}}\|_{\mathbb{H}_1})(1+\|\phi^{n}_{r}\|_\mathscr{U})d r\\
\leq\!\!\!\!\!\!\!\!&& C_{L_0,M}\big(|t-s|^{\frac{1}{\alpha_1}}+|t-s|^{\frac{1}{2}}\big),
\end{eqnarray*}
which completes the assertion.

\textbf{Step 2}: In this step, we show that $\{\bar{X}^{\phi^{n}}\}_{n\geq1}$ is pre-compact in $L^{\alpha_1}([0, T], \mathbb{H}_1).$  In view of \cite[Theorem 5]{S}, it suffices to prove
\begin{equation}\label{preall}
\lim _{h \to 0} \sup _{n\geq1} \int_{0}^{T-h}\|\bar{X}_{t+h}^{\phi^n}-\bar{X}_{t}^{\phi^n}\|_{\mathbb{V}_1^{*}}^{\alpha_1} d t=0.
\end{equation}
To this end, we consider two cases
according to the value of $\alpha_1.$

We first consider the case $1< \alpha_1\leq 2.$
By the energy equality, we know
\begin{equation}\label{pre1}
\|\bar{X}_{t+h}^{\phi^n}-\bar{X}_{t}^{\phi^n}\|_{\mathbb{H}_{1}}^{2}=\sum_{i=1}^{3} \mathscr{H} _{i}(t),
\end{equation}
where
\begin{eqnarray*}
\mathscr{H} _{1}(t):= \!\!\!\!\!\!\!\!&& 2 \int_{t}^{t+h} \,_{\mathbb{V}_{1}^{*}}\langle \mathfrak{A}_1(\bar{X}_{r}^{\phi^n}), \bar{X}_{r}^{\phi^n}-\bar{X}_{t}^{\phi^n}\rangle_{\mathbb{V}_{1}} d r;  \nonumber\\
\mathscr{H} _{2}(t):=\!\!\!\!\!\!\!\!&& 2 \int_{t}^{t+h}\langle \bar{\mathfrak{f}}(\bar{X}_{r}^{\phi^n}), \bar{X}_{r}^{\phi^n}-\bar{X}_{t}^{\phi^n}\rangle_{\mathbb{H}_{1}} d r;\nonumber\\
 \mathscr{H} _{3}(t):=\!\!\!\!\!\!\!\!&& 2 \int_{t}^{t+h}\langle \mathfrak{B}_1(\bar{X}_{r}^{\phi^n})P_1\phi^n_{r}, \bar{X}_{r}^{\phi^n}-\bar{X}_{t}^{\phi^n}\rangle_{\mathbb{H}_{1}} d r.
\end{eqnarray*}
Using condition $({\mathbf{A}}{\mathbf{4}})$ and H\"older's inequality to get that
\begin{eqnarray}\label{pre2}
\!\!\!\!\!\!\!\!&&\int_{0}^{T-h}\mathscr{H} _{1}(t)dt\nonumber\\
\leq \!\!\!\!\!\!\!\!&& C \int_{0}^{T-h} \int_{t}^{t+h}\|\mathfrak{A}_1(\bar{X}_{r}^{\phi^n})\|_{\mathbb{V}_{1}^{*}}\|\bar{X}_{r}^{\phi^n}-\bar{X}_{t}^{\phi^n} \|_{\mathbb{V}_{1}}  d r d t \nonumber\\
\leq \!\!\!\!\!\!\!\!&& C\Big(\int_{0}^{T-h} \int_{t}^{t+h}\|\mathfrak{A}_1(\bar{X}_{r}^{\phi^n})\|_{\mathbb{V}_{1}^{*}}^{\frac{\alpha_{1}}{\alpha_{1}-1}}  d r d t\Big)^{\frac{\alpha_{1}-1}{\alpha_{1}}} \Big(\int_{0}^{T-h} \int_{t}^{t+h}\|\bar{X}_{r}^{\phi^n}-\bar{X}_{t}^{\phi^n}\|_{\mathbb{V}_{1}}^{\alpha_{1}}  d r d t\Big)^{\frac{1}{\alpha_{1}}} \nonumber\\
\leq \!\!\!\!\!\!\!\!&& C\Big(h\int_{0}^{T}(1+\|\bar{X}_{t}^{\phi^n}\|_{\mathbb{V}_{1}}^{\alpha_{1}})(1+\|\bar{X}_{t}^{\phi^n}\|_{\mathbb{H}_{1}}^{\beta_{1}})  d t\Big)^{\frac{\alpha_{1}-1}{\alpha_{1}}} \Big(h\int_{0}^{T}\|\bar{X}_{t}^{\phi^n}\|_{\mathbb{V}_{1}}^{\alpha_{1}} d t\Big)^{\frac{1}{\alpha_{1}}} \nonumber\\
\leq \!\!\!\!\!\!\!\!&& C_{L_0} h,
\end{eqnarray}
where the third inequality is due to
\begin{eqnarray*}
\!\!\!\!\!\!\!\!&&\int_{0}^{T-h} \int_{t}^{t+h}\|\mathfrak{A}_1(\bar{X}_{r}^{\phi^n})\|_{\mathbb{V}_{1}^{*}}^{\frac{\alpha_{1}}{\alpha_{1}-1}}  d r d t\\
\leq\!\!\!\!\!\!\!\!&& \int_{T-h}^{T} \int_{r-h}^{T-h}\|\mathfrak{A}_1(\bar{X}_{r}^{\phi^n})\|_{\mathbb{V}_{1}^{*}}^{\frac{\alpha_{1}}{\alpha_{1}-1}}dtdr+\int_{h}^{T-h} \int_{r-h}^{r}\|\mathfrak{A}_1(\bar{X}_{r}^{\phi^n})\|_{\mathbb{V}_{1}^{*}}^{\frac{\alpha_{1}}{\alpha_{1}-1}}dtdr\\
\!\!\!\!\!\!\!\!&&+ \int_{0}^{h} \int_{0}^{r}\|\mathfrak{A}_1(\bar{X}_{r}^{\phi^n})\|_{\mathbb{V}_{1}^{*}}^{\frac{\alpha_{1}}{\alpha_{1}-1}}dtdr\\
\leq\!\!\!\!\!\!\!\!&&h\int_{0}^{T} \|\mathfrak{A}_1(\bar{X}_{t}^{\phi^n})\|_{\mathbb{V}_{1}^{*}}^{\frac{\alpha_{1}}{\alpha_{1}-1}}dt.
\end{eqnarray*}
Similarly, by H\"older's inequality and the Lipschitz continuity of $\bar{\mathfrak{f}}$,
\begin{eqnarray}\label{pre3}
\!\!\!\!\!\!\!\!&& \int_{0}^{T-h}\mathscr{H} _{2}(t)dt \nonumber\\
\leq \!\!\!\!\!\!\!\!&& C\Big(\int_{0}^{T-h} \int_{t}^{t+h}\|\bar{\mathfrak{f}}(\bar{X}_{r}^{\phi^n})\|_{\mathbb{H}_{1}}^{2}d r d t\Big)^{\frac{1}{2}}
\Big(\int_{0}^{T-h} \int_{t}^{t+h}\|\bar{X}_{r}^{\phi^n}-\bar{X}_{t}^{\phi^n}\|_{\mathbb{H}_{1}}^{2}  d r d t\Big)^{\frac{1}{2}} \nonumber\\
\leq \!\!\!\!\!\!\!\!&& C\Big(h\int_{0}^{T}(1+\|\bar{X}_{t}^{\phi^n}\|_{\mathbb{H}_{1}}^{2}) d t\Big)^{\frac{1}{2}}\Big(h\int_{0}^{T}\|\bar{X}_{t}^{\phi^n}\|_{\mathbb{H}_{1}}^{2} d t\Big)^{\frac{1}{2}} \nonumber\\
\leq \!\!\!\!\!\!\!\!&& C_{L_0} h,
\end{eqnarray}
and by condition $({\mathbf{A}}{\mathbf{5}})$,
\begin{eqnarray}\label{pre4}
\!\!\!\!\!\!\!\!&& \int_{0}^{T-h}\mathscr{H} _{3}(t)dt \nonumber\\
\leq \!\!\!\!\!\!\!\!&& C\Big(\int_{0}^{T-h} \int_{t}^{t+h}\|\mathfrak{B}_1(\bar{X}_{r}^{\phi^n})\|_{L_{2}(\mathbb{U}_1, \mathbb{H}_{1})}^{2}\|\phi_{r}^{n}\|_{\mathscr{U}}^{2}  d r d t\Big)^{\frac{1}{2}} \nonumber\\
\!\!\!\!\!\!\!\!&& \cdot\Big(\int_{0}^{T-h} \int_{t}^{t+h}\|\bar{X}_{r}^{\phi^n}-\bar{X}_{t}^{\phi^n}\|_{\mathbb{H}_{1}}^{2}  d r d t\Big)^{\frac{1}{2}} \nonumber\\
\leq \!\!\!\!\!\!\!\!&& C\Big(h\int_{0}^{T}(1+\|\bar{X}_{t}^{\phi^n}\|_{\mathbb{H}_{1}}^{2})\|\phi_{t}^{n}\|_{\mathscr{U}}^{2}  dt\Big)^{\frac{1}{2}}\Big(h\int_{0}^{T}\|\bar{X}_{t}^{\phi^n}\|_{\mathbb{H}_{1}}^{2} d s\Big)^{\frac{1}{2}} \nonumber\\
\leq \!\!\!\!\!\!\!\!&& C_{L_0} h .
\end{eqnarray}
Recalling (\ref{pre1}), (\ref{pre2})-(\ref{pre4}) yields that
\begin{equation*}
\int_{0}^{T-h}\|\bar{X}_{t+h}^{\phi^n}-\bar{X}_{t}^{\phi^n}\|_{\mathbb{V}_1^{*}}^{\alpha_1}\leq C\int_{0}^{T-h}\|\bar{X}_{t+h}^{\phi^n}-\bar{X}_{t}^{\phi^n}\|_{\mathbb{H}_1^{*}}^{2} d t\leq C_{L_0} h,
\end{equation*}
which implies (\ref{preall}) hold for the case $1< \alpha_1\leq 2.$

Next, we consider the remaining case $\alpha_1>2.$ It is easy to get
\begin{equation}\label{pre5}
\|\bar{X}_{t+h}^{\phi^n}-\bar{X}_{t}^{\phi^n}\|_{\mathbb{H}_{1}}^{\alpha_1}=\sum_{i=1}^{3} \mathscr{K} _{i}(t),
\end{equation}
where
\begin{eqnarray*}
\mathscr{K} _{1}(t):= \!\!\!\!\!\!\!\!&& \alpha_1 \int_{t}^{t+h} \|\bar{X}_{r}^{\phi^n}-\bar{X}_{t}^{\phi^n}\|_{\mathbb{H}_{1}}^{\alpha_1-2} \,_{\mathbb{V}_{1}^{*}}\langle \mathfrak{A}_1(\bar{X}_{r}^{\phi^n}), \bar{X}_{r}^{\phi^n}-\bar{X}_{t}^{\phi^n}\rangle_{\mathbb{V}_{1}} d r;  \nonumber\\
\mathscr{K} _{2}(t):=\!\!\!\!\!\!\!\!&& \alpha_1 \int_{t}^{t+h} \|\bar{X}_{r}^{\phi^n}-\bar{X}_{t}^{\phi^n}\|_{\mathbb{H}_{1}}^{\alpha_1-2}\langle \bar{\mathfrak{f}}(\bar{X}_{r}^{\phi^n}), \bar{X}_{r}^{\phi^n}-\bar{X}_{t}^{\phi^n}\rangle_{\mathbb{H}_{1}} d r;\nonumber\\
\mathscr{K} _{3}(t):=\!\!\!\!\!\!\!\!&& \alpha_1 \int_{t}^{t+h} \|\bar{X}_{r}^{\phi^n}-\bar{X}_{t}^{\phi^n}\|_{\mathbb{H}_{1}}^{\alpha_1-2}\langle \mathfrak{B}_1(\bar{X}_{r}^{\phi^n})P_1\phi^n_{r}, \bar{X}_{r}^{\phi^n}-\bar{X}_{t}^{\phi^n}\rangle_{\mathbb{H}_{1}} d r.
\end{eqnarray*}
Following similar arguments of (\ref{pre2})-(\ref{pre4}), we have
\begin{eqnarray}
\int_{0}^{T-h}\mathscr{K} _{1}(t)dt
\leq \!\!\!\!\!\!\!\!&& C_{L_0} \int_{0}^{T-h} \int_{t}^{t+h}\,_{\mathbb{V}_{1}^{*}}\langle \mathfrak{A}_1(\bar{X}_{r}^{\phi^n}), \bar{X}_{r}^{\phi^n}-\bar{X}_{t}^{\phi^n}\rangle_{\mathbb{V}_{1}}  d r d t \nonumber\\
\leq \!\!\!\!\!\!\!\!&&C_{L_0} h,\label{pre6}\\ \int_{0}^{T-h}\mathscr{K} _{2}(t)dt 
\leq \!\!\!\!\!\!\!\!&& C_{L_0}\int_{0}^{T-h} \int_{t}^{t+h}\langle \bar{\mathfrak{f}}(\bar{X}_{r}^{\phi^n}), \bar{X}_{r}^{\phi^n}-\bar{X}_{t}^{\phi^n}\rangle_{\mathbb{H}_{1}}  d r d t \nonumber\\
\leq \!\!\!\!\!\!\!\!&&C_{L_0} h,\label{pre7}\\ \int_{0}^{T-h}\mathscr{K} _{3}(t)dt 
\leq \!\!\!\!\!\!\!\!&& C_{L_0}\int_{0}^{T-h} \int_{t}^{t+h}\langle \mathfrak{B}_1(\bar{X}_{r}^{\phi^n})P_1\phi^n_{r}, \bar{X}_{r}^{\phi^n}-\bar{X}_{t}^{\phi^n}\rangle_{\mathbb{H}_{1}}  d r d t \nonumber\\
\leq \!\!\!\!\!\!\!\!&&C_{L_0}. h,\label{pre8}
\end{eqnarray}
Substituting (\ref{pre6})-(\ref{pre8}) into (\ref{pre5}), we conclude that
\begin{equation*}
\int_{0}^{T-h}\|\bar{X}_{t+h}^{\phi^n}-\bar{X}_{t}^{\phi^n}\|_{\mathbb{V}_1^{*}}^{\alpha_1}\leq C_{L_0} h,
\end{equation*}
which implies (\ref{preall}) hold for the case $\alpha_1>2.$ Thus, we complete the proof of this lemma.
\end{proof}

We now proceed to prove that for any fixed $M>0$, the set $K_{M}:=\big\{\bar{X}^{\phi}: \phi \in S_{M}\big\}$ is compact. The proof process is divided into two steps:

\vspace{1mm}
\textbf{Step 1}: It is worth noting that $S_M$ is a bounded closed subset in $L^2([0,T];\mathscr{U})$, which implies it is weakly
compact. Thus there exists $\phi\in S_M$ such that
\begin{equation*}
\phi^{n}\rightharpoonup\phi~~\text{in}~~ L^{2}([0, T] ; \mathscr{U}).
\end{equation*}
Here taking a
subsequence if necessary. Moreover, by conditions $({\mathbf{A}}{\mathbf{4}})$, $({\mathbf{A}}{\mathbf{5}})$, Lemma \ref{pre}, the Lipschitz continuity of $\bar{\mathfrak{f}}$ and (\ref{seletonestimate}), there exist a subsequence $(n_k)_{k\geq 1}$ (denoted again by $(n)_{n\geq 1}$ for simplicity), $X\in\mathbb{S}_T$, $\mathscr{A}(\cdot) \in L^{\frac{\alpha_1}{\alpha_1-1}}([0, T], \mathbb{V}_1^{*})$  such that
\begin{eqnarray}
\!\!\!\!\!\!\!\!&&\bar{X}^{\phi^{n}}_{\cdot} \to X_{\cdot}~~\text {in }~~ \mathbb{S}_T;\label{conver1}\\
\!\!\!\!\!\!\!\!&&\bar{X}^{\phi^{n}}_{\cdot} \rightharpoonup X_{\cdot}~~\text {in }~~ L^{\alpha_{1}}([0, T] ; \mathbb{V}_{1});\label{conver2}\\
\!\!\!\!\!\!\!\!&&\mathfrak{A}_1(\bar{X}^{\phi^{n}}_{\cdot}) \rightharpoonup \mathscr{A}(\cdot)~~\text { in }~~  L^{\frac{\alpha_1}{\alpha_1-1}}([0, T], \mathbb{V}_1^{*}),\nonumber
\end{eqnarray}
as $n \to \infty.$
Denote
\begin{equation}\label{e5}
\widehat{X}_{t}^{\phi} := x+\int_{0}^{t} \mathscr{A}(s) d s+\int_{0}^{t}\bar{\mathfrak{f}}(X_s)ds+\int_{0}^{t} \mathfrak{B}(X_s)\phi_s d s.
\end{equation}
Note that
\begin{equation}\label{e4}
\bar{X}_{t}^{\phi^n}=\int_0^t\mathfrak{A}_1(\bar{X}_{s}^{\phi^n})ds+\int_0^t\bar{\mathfrak{f}}(\bar{X}_{s}^{\phi^n})ds+\int_0^t\mathfrak{B}_{1}(\bar{X}_{s}^{\phi^n})P_1\phi^n_s ds.
\end{equation}
By (\ref{conver1}), we know that
\begin{equation}\label{conver3}
\bar{X}^{\phi^{n}}_t \to X_t~~\text {in }~~ \mathbb{H}_1,~dt\text{-a.e.}.
\end{equation}
Taking the weak limit in (\ref{e4}) (see also the proof of (\ref{esB})), in light of condition (\ref{growthB1}), (\ref{conver2}) and (\ref{conver3}), it is easy to see
$$\widehat{X}_{t}^{\phi}=X_t,~dt\text{-a.e.}.$$

It remains to prove
\begin{equation}
\mathscr{A}(\cdot)=\mathfrak{A}_1(X_{\cdot}).\label{A11}
\end{equation}
Then, by the uniqueness of solutions to (\ref{skletonequation}), which follows from condition $({\mathbf{A}}{\mathbf{2}})$,  we can deduce that
$$X_t=\bar{X}_t^\phi,~dt\text{-a.e.},$$
which
is the solution of (\ref{skletonequation}).

 According to Remark \ref{re2.2}, the map $\mathbb{V}_1 \ni u \mapsto \mathfrak{A}_1(u) \in \mathbb{V}_1^*$ is pseudo-monotone.  Following the same argument as in the proof of \cite[Lemma 2.16]{RSZ}, we can get the following Claim:

\textbf{Claim}: If
\begin{eqnarray}
\!\!\!\!\!\!\!\!&&\bar{X}_\cdot^{{\phi}^n} \rightharpoonup X ~~\text {in }~~ L^{\alpha_1}([0, T], \mathbb{V}_1); \nonumber\\
\!\!\!\!\!\!\!\!&&\mathfrak{A}_1(\bar{X}_\cdot^{{\phi}^n}) \rightharpoonup \mathscr{A}(\cdot) ~~\text {in }~~ L^\frac{\alpha_1} {\alpha_1-1}([0, T], \mathbb{V}_1^{*});\nonumber\\
\!\!\!\!\!\!\!\!&&\liminf _{n \to \infty} \int_{0}^{T}\,_{\mathbb{V}_1^*}\langle \mathfrak{A}(\bar{X}_t^{{\phi}^n}), \bar{X}_t^{{\phi}^n}\rangle_{\mathbb{V}_1} d t \geq \int_{0}^{T}\,_{\mathbb{V}_1^*}\langle\mathscr{A}(t), X_t\rangle_{\mathbb{V}_1} d t,\label{e6}
\end{eqnarray}
then
\begin{equation*}
\mathscr{A}(t)=\mathfrak{A}_1(\bar{X}_t^{\phi}) ~~\text{for~a.e.}~~ t \in [0, T].
\end{equation*}

Note that by (\ref{conver1}) and  the lower semicontinuity of the norm $\|\cdot\|_{\mathbb{H}_1}$ in $\mathbb{V}_1^*$, it follows that
\begin{equation}\label{e7}
\sup_{t\in[0,T]}\|X_t\|_{\mathbb{H}_1} \leq \liminf _{n \to \infty}\sup_{t\in[0,T]}\|\bar{X}^{\phi^n}_t\|_{\mathbb{H}_1}<\infty.
\end{equation}
Similarly, we also have
\begin{equation}\label{e8}
\int_{0}^{T}\|X_{t}\|_{\mathbb{V}_{1}}^{\alpha_{1}} d t<\infty.
\end{equation}

In light of the energy equalities for (\ref{e5}) and (\ref{e4}), in order to obtain (\ref{e6}), it is sufficient to show
\begin{equation}\label{e9}
\int_{0}^{T}\langle \mathfrak{B}_1(\bar{X}^{\phi^n}_t)P_1\phi_t^n,\bar{X}^{\phi^n}_t\rangle_{\mathbb{H}_1}dt\to\int_{0}^{T}\langle \mathfrak{B}_1(X_t)P_1\phi_t,X_t\rangle_{\mathbb{H}_1}dt.
\end{equation}
Collecting estimates  (\ref{e7}), (\ref{e8}) and (\ref{seletonestimate}), we can choose a constant $N>0$ such that
\begin{equation}\label{e10}
\sup _{n\geq1}\Bigg\{\sup _{t \in[0, T]}(\|\bar{X}_{t}^{\phi^{n}}\|_{\mathbb{H}_{1}}^2+\|X_{t}\|_{\mathbb{H}_{1}}^2)+ \int_{0}^{T}(\|\bar{X}_{t}^{\phi^{n}}\|_{\mathbb{V}_{1}}^{\alpha_{1}}+\|X_{t}\|_{\mathbb{V}_{1}}^{\alpha_{1}}) d t\Bigg\}=N.
\end{equation}
Notice that
\begin{eqnarray}\label{esB}
\!\!\!\!\!\!\!\!&&\int_{0}^{T}\big[\langle \mathfrak{B}_1(\bar{X}^{\phi^n}_t)P_1\phi_t^n,\bar{X}^{\phi^n}_t\rangle_{\mathbb{H}_1}-\langle \mathfrak{B}_1(X_t)P_1\phi_t,X_t\rangle_{\mathbb{H}_1}\big]dt\nonumber\\
=\!\!\!\!\!\!\!\!&&\int_{0}^{T}\langle \big[\mathfrak{B}_1(\bar{X}^{\phi^n}_t)-\mathfrak{B}_1(X_t)\big]P_1\phi_t^n,\bar{X}^{\phi^n}_t\rangle_{\mathbb{H}_1}dt\nonumber\\
\!\!\!\!\!\!\!\!&&+\int_{0}^{T}\langle \mathfrak{B}_1(X_t)P_1\phi_t^n,\bar{X}^{\phi^n}_t-X_t\rangle_{\mathbb{H}_1}dt\nonumber\\
\!\!\!\!\!\!\!\!&&+\int_{0}^{T}\langle \mathfrak{B}_1(X_t)P_1(\phi_t^n-\phi_t),X_t\rangle_{\mathbb{H}_1}dt\nonumber\\
=:\!\!\!\!\!\!\!\!&& \sum_{i=1}^{3}L_i(n).
\end{eqnarray}
By condition $({\mathbf{A}}{\mathbf{5}})$ and H\"older's inequality, we have
\begin{eqnarray*}
L_1(n)\leq\!\!\!\!\!\!\!\!&& C\Big[\int_{0}^{T}\|\bar{X}^{\phi^n}_t\|^2_{\mathbb{H}_1}\|\phi^n_t\|^2_\mathscr{U}dt\Big]^\frac{1}{2} \Big[\int_{0}^{T}\|\bar{X}^{\phi^n}_t-X_t\|^2_{\mathbb{H}_1}dt\Big]^\frac{1}{2}\\
\leq\!\!\!\!\!\!\!\!&&C_{T,M,N}\Big[\int_{0}^{T}\|\bar{X}^{\phi^n}_t-X_t\|^2_{\mathbb{H}_1}dt\Big]^\frac{1}{2}
\end{eqnarray*}
and
\begin{eqnarray*}
L_2(n)\leq\!\!\!\!\!\!\!\!&& C\Big[\int_{0}^{T}(1+\|\bar{X}^{\phi^n}_t\|^2_{\mathbb{H}_1})\|\phi^n_t\|^2_\mathscr{U}dt\Big]^\frac{1}{2} \Big[\int_{0}^{T}\|\bar{X}^{\phi^n}_t-X_t\|^2_{\mathbb{H}_1}dt\Big]^\frac{1}{2}\\
\leq\!\!\!\!\!\!\!\!&&C_{T,M,N}\Big[\int_{0}^{T}\|\bar{X}^{\phi^n}_t-X_t\|^2_{\mathbb{H}_1}dt\Big]^\frac{1}{2},
\end{eqnarray*}
where (\ref{conver3}), (\ref{e9}) and the dominated convergence theorem imply that $L_{1}(n),L_{2}(n)\to0$. In addition, by the weak convergence of $\phi^{n}$ in $L^{2}([0, T]; \mathscr{U}),$ we can easily obtain $L_{3}(n)\to0.$
Thus, (\ref{e9}) is proved. As a result, $X$ is the solution of (\ref{skletonequation}), i.e.,~$X_t=\bar{X}_t^\phi$, $dt$-a.e..

\vspace{1mm}
\textbf{Step 2}: Now, we are left to prove that $\bar{X}^{\phi^{n}}\to\bar{X}^{\phi}$ in $C([0, T] ; \mathbb{H}_{1})$, as $n \to \infty .$

Denote $Z_t^n:=\bar{X}^{\phi^n}_t-\bar{X}^{\phi}_t.$ We recall the energy equality of $Z_t^n$,
\begin{eqnarray*}
\|Z^n_t\|_{\mathbb{H}_1}^2=\!\!\!\!\!\!\!\!&&2\int_{0}^{t}\,_{\mathbb{V}_1^*}\big\langle \mathfrak{A}_1(\bar{X}^{\phi^n}_s)-\mathfrak{A}_1(\bar{X}^{\phi}_s),Z^n_s\big\rangle_{\mathbb{V}_1}ds+2\int_{0}^{t}\big\langle \bar{\mathfrak{f}}(\bar{X}^{\phi^n}_s)-\bar{\mathfrak{f}}(\bar{X}^{\phi}_s),Z^n_s\big\rangle_{\mathbb{H}_1}ds\\
\!\!\!\!\!\!\!\!&&+2\int_{0}^{t}\big\langle \mathfrak{B}_1(\bar{X}^{\phi^n}_s)P_1\phi^n_s-\mathfrak{B}_1(\bar{X}^{\phi}_s)P_1\phi_s,Z^n_s\big\rangle_{\mathbb{H}_1}ds.
\end{eqnarray*}
According to condition $({\mathbf{A}}{\mathbf{2}})$, the Lipschitz continuity of $\bar{\mathfrak{f}}$ and Young's inequality, it is easy to get
\begin{eqnarray*}
\|Z^n_t\|_{\mathbb{H}_1}^2\leq\!\!\!\!\!\!\!\!&& C \int_{0}^{t}(1+\rho(X_{s}^{\phi^{n}})+\eta(\bar{X}_{s}^{\phi})+\|\phi_{s}^n\|_{\mathscr{U}}^2)\|Z^n_s\|^2_{\mathbb{H}_1}ds\\
\!\!\!\!\!\!\!\!&&+\int_{0}^{t}\langle \mathfrak{B}_1(\bar{X}^{\phi}_s)P_1(\phi_s^n-\phi_s),Z^n_s\rangle_{\mathbb{H}_1}ds.
\end{eqnarray*}
Using Gronwall's lemma leads to
\begin{eqnarray*}
\sup _{t \in[0, T]}\|Z_{t}^{n}\|_{\mathbb{H}_{1}}^{2}\leq\!\!\!\!\!\!\!\!&& C_{T,M,N} \sup _{t \in[0, T]}\Big|\int_{0}^{t}\langle \mathfrak{B}_1(\bar{X}^{\phi}_s)P_1(\phi_s^n-\phi_s),Z^n_s\rangle_{\mathbb{H}_1}dt\Big|
\\
\leq\!\!\!\!\!\!\!\!&&C_{T,M,N} \sum_{i=1}^{4} \widetilde{L}_{i}(n),
\end{eqnarray*}
where
\begin{eqnarray*}
\!\!\!\!\!\!\!\!&&\widetilde{L}_{1}(n):=\int_{0}^{T}\big|\langle \mathfrak{B}_1(\bar{X}_{t}^{\phi})P_1(\phi_{t}^{n}-\phi_{t}), Z_{t}^{n}-Z_{t(\zeta)}^{n}\rangle_{\mathbb{H}_{1}}\big| d t, \\
\!\!\!\!\!\!\!\!&&\widetilde{L}_{2}(n):=\int_{0}^{T}\big|\langle(\mathfrak{B}_1(\bar{X}_{t}^{\phi})-\mathfrak{B}_1(\bar{X}_{t(\zeta)}^{\phi}))P_1(\phi_{t}^{n}-\phi_{t}), Z_{t(\zeta)}^{n}\rangle_{\mathbb{H}_{1}}\big| d t, \\
\!\!\!\!\!\!\!\!&&\widetilde{L}_{3}(n):=\sup _{t \in[0, T]}\big|\int_{t(\zeta)}^{t}\langle \mathfrak{B}_1(\bar{X}_{s(\zeta)}^{\phi})P_1(\phi_{s}^{n}-\phi_{s}), Z_{s(\zeta)}^{n}\rangle_{\mathbb{H}_{1}} d s\big|, \\
\!\!\!\!\!\!\!\!&&\widetilde{L}_{4}(n):=\sup _{t \in[0, T]} \sum_{k=0}^{[t / \zeta]-1}\big|\langle \mathfrak{B}_1(\bar{X}_{k \zeta}^{\phi}) \int_{k \zeta}^{(k+1) \zeta}P_1(\phi_{s}^{n}-\phi_{s}) d s, Z_{k \zeta}^{n}\rangle_{\mathbb{H}_{1}}\big| .
\end{eqnarray*}
By Lemma \ref{MonoSkePrioLemm} and H\"older's inequality, we can deduce that
\begin{eqnarray}
\widetilde{L}_{1}(n)\leq \!\!\!\!\!\!\!\!&& \Bigg\{\int_{0}^{T}\|\mathfrak{B}_1(\bar{X}_{t}^{\phi})\|_{L_{2}(\mathbb{U}_1, \mathbb{H}_{1})}^{2}\|P_1\|^2\|\phi_{t}^{n}-\phi_{t}\|_{\mathscr{U}}^{2} d t\Bigg\}^{\frac{1}{2}}\cdot\Bigg\{\int_{0}^{T }\|Z_{t}^{n}-Z_{t(\zeta)}^{n}\|_{\mathbb{H}_{1}}^{2} d t\Bigg\}^{\frac{1}{2}} \nonumber\\
\leq \!\!\!\!\!\!\!\!&& C_{T,M,N} \zeta^{\frac{1}{2}},\label{L21}
\\
\widetilde{L}_{2}(n)\leq\!\!\!\!\!\!\!\!&& C_N\Bigg\{\int_{0}^{T }\|\bar{X}_{t}^{\phi}-\bar{X}_{t(\zeta)}^{\phi}\|_{\mathbb{H}_{1}}^{2}\|Z_{t(\zeta)}^{n}\|_{\mathbb{H}_{1}}^{2} d t\Bigg\}^{\frac{1}{2}}
\cdot\Bigg\{\int_{0}^{T}\|\phi_{t}^{n}-\phi_{t}\|_{\mathscr{U}}^{2} d t\Bigg\}^{\frac{1}{2}} \nonumber\\
\leq \!\!\!\!\!\!\!\!&&C_{T,M,N} \zeta^{\frac{1}{2}},\label{L22}
\\
\widetilde{L}_{3}(n)\leq \!\!\!\!\!\!\!\!&&\zeta^{\frac{1}{2}}\Bigg\{\int_{0}^{T }(1+\|\bar{X}_{s(\zeta)}^{\phi}\|_{\mathbb{H}_{1}}^{2})\|\phi_{s}^{n}-\phi_{s}\|_{\mathscr{U}}^{2} d s\Bigg\}^{\frac{1}{2}}
\cdot\Bigg\{\sup _{s \in[0, T ]}\|X_{s}^{\phi^{n}}-\bar{X}_{s}^{\phi}\|_{\mathbb{H}_{1}}^{2}\Bigg\}^{\frac{1}{2}} \nonumber\\
\leq \!\!\!\!\!\!\!\!&& C_{T,M,N} \zeta^{\frac{1}{2}}.\label{L23}
\end{eqnarray}
In view of the compactness of $\mathfrak{B}_1(\bar{X}_{k \zeta}^{\phi})$ and the weak convergence of $\phi^n_t$, it follows that
\begin{equation}\label{L24}
\mathfrak{B}_1(\bar{X}_{k \zeta}^{\phi}) \int_{k \zeta}^{(k+1) \zeta}P_1(\phi_{s}^{n}-\phi_{s}) d s\to0~~\text {in }~~\mathbb{H}_1,
\end{equation}
as $n\to\infty.$ Then, $\widetilde{L}_{4}(n)\to0$ follows from (\ref{e10}) and (\ref{seletonestimate}). Combining (\ref{L21})-(\ref{L24}) and letting $\zeta\to0,$ one can conclude that $K_M$ is pre-compact in $C([0, T] ; \mathbb{H}_{1}).$ Moreover, the above argument also implies  that $K_M$ is closed. As a result, the verification of (b) is completed.
\hspace{\fill}$\Box$

\subsection{Verification of (a)}\label{Sub4.3}
Since we intend to utilize the time discretization scheme in this part, we first need to introduce the following auxiliary process. Dividing the time interval $[0,T]$ into some subintervals of size $\zeta$ that is a positive number depending on $\delta$, we consider
\begin{eqnarray*}
\left\{\begin{aligned}
&d\hat{Y}^{\varepsilon,\delta}_t=\frac 1\delta \mathfrak{A}_2(X^{\phi^\varepsilon}_{t(\zeta)},\hat{Y}_t^{\varepsilon,\delta})dt+\frac 1{\sqrt{\delta}}\mathfrak{B}_2(X^{\phi^\varepsilon}_{t(\zeta)},\hat{Y}^{\varepsilon,\delta}_t)dW^2_t,\\
&\hat{Y}^{\varepsilon,\delta}_0=y\in \mathbb{H}_2.
\end{aligned}\right.
\end{eqnarray*}
Clearly, for $k\in\mathbb N,\,t\in[k\zeta,(k+1)\zeta\wedge T]$ we have
$$\hat{Y}^{\varepsilon,\delta}_t=\hat{Y}^{\varepsilon,\delta}_{k\zeta}+\frac 1\delta\int_{k\zeta}^t \mathfrak{A}_2(X^{\phi^\varepsilon}_{k\zeta},\hat{Y}_s^{\varepsilon,\delta})ds+\frac 1{\sqrt{\delta}}\int_{k\zeta}^t\mathfrak{B}_2(X^{\phi^\varepsilon}_{k\zeta},\hat{Y}_s^{\varepsilon,\delta})dW^2_s.$$

 The following is an energy estimate of $\hat{Y}^{\varepsilon,\delta}$.
\begin{lemma}\label{auxiliaryestimate}
For any $T>0$, there is a constant $C_T>0$ such that for any $x \in \mathbb{H}_{1},~y \in \mathbb{H}_{2}$ and $\varepsilon, \delta >0$ small enough,
\begin{equation}\label{auxiliaryY}
\sup _{t \in[0, T]} \mathbb{E}\|\widehat{Y}_{t}^{\varepsilon, \delta}\|_{\mathbb{H}_{2}}^{2} \leq C_T(1+\|x\|_{\mathbb{H}_{1}}^{2}+\|y\|_{\mathbb{H}_{2}}^{2}).
\end{equation}
\end{lemma}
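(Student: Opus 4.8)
The plan is to run the same kind of energy estimate used for the $Y^{\phi^\varepsilon}$-bound in Lemma~\ref{controlestimate}, but now \emph{without} any control term in the fast equation, so the computation is in fact simpler. First I would apply It\^o's formula to $\|\widehat{Y}^{\varepsilon,\delta}_t\|_{\mathbb{H}_2}^2$, which yields
$$
\|\widehat{Y}^{\varepsilon,\delta}_t\|_{\mathbb{H}_2}^2
=\|y\|_{\mathbb{H}_2}^2
+\frac1\delta\int_0^t\Big(2\,_{\mathbb{V}_2^*}\langle\mathfrak{A}_2(X^{\phi^\varepsilon}_{s(\zeta)},\widehat{Y}^{\varepsilon,\delta}_s),\widehat{Y}^{\varepsilon,\delta}_s\rangle_{\mathbb{V}_2}
+\|\mathfrak{B}_2(X^{\phi^\varepsilon}_{s(\zeta)},\widehat{Y}^{\varepsilon,\delta}_s)\|_{L_2(\mathbb{U}_2,\mathbb{H}_2)}^2\Big)\,ds
+\frac2{\sqrt\delta}\int_0^t\langle\mathfrak{B}_2(X^{\phi^\varepsilon}_{s(\zeta)},\widehat{Y}^{\varepsilon,\delta}_s)\,dW^2_s,\widehat{Y}^{\varepsilon,\delta}_s\rangle_{\mathbb{H}_2}.
$$
To control the $ds$-integrand I would invoke the strict dissipativity estimate (\ref{e1}), i.e.\ that there is $\lambda\in(0,\kappa)$ with $2\,_{\mathbb{V}_2^*}\langle\mathfrak{A}_2(u,v),v\rangle_{\mathbb{V}_2}\le-\lambda\|v\|_{\mathbb{H}_2}^2+C(1+\|u\|_{\mathbb{H}_1}^2)$ for all $u\in\mathbb{H}_1,\,v\in\mathbb{V}_2$, together with the uniform bound $\sup_{v}\|\mathfrak{B}_2(u,v)\|_{L_2(\mathbb{U}_2,\mathbb{H}_2)}\le C(1+\|u\|_{\mathbb{H}_1})$ from $({\mathbf{H}}{\mathbf{4}})$; this bounds the whole integrand by $-\lambda\|\widehat{Y}^{\varepsilon,\delta}_s\|_{\mathbb{H}_2}^2+C(1+\|X^{\phi^\varepsilon}_{s(\zeta)}\|_{\mathbb{H}_1}^2)$.

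Next I would take expectations. Since the auxiliary equation admits a continuous $\mathbb{H}_2$-valued solution, a routine localization by the stopping times $\tau_N:=\inf\{t\ge0:\|\widehat{Y}^{\varepsilon,\delta}_t\|_{\mathbb{H}_2}>N\}$ — together with a first, crude, $\delta$-dependent second-moment bound obtained by discarding the dissipation term and using Burkholder--Davis--Gundy — shows that the stochastic integral is a genuine martingale, so writing $g(t):=\mathbb{E}\|\widehat{Y}^{\varepsilon,\delta}_t\|_{\mathbb{H}_2}^2$ one arrives at
$$
g(t)\le\|y\|_{\mathbb{H}_2}^2+\frac1\delta\int_0^t\big(-\lambda g(s)+C(1+\mathbb{E}\|X^{\phi^\varepsilon}_{s(\zeta)}\|_{\mathbb{H}_1}^2)\big)\,ds .
$$
By Lemma~\ref{controlestimate} (with $p=1$) we have $\sup_{s\in[0,T]}\mathbb{E}\|X^{\phi^\varepsilon}_{s(\zeta)}\|_{\mathbb{H}_1}^2\le C_{T,M}(1+\|x\|_{\mathbb{H}_1}^2+\|y\|_{\mathbb{H}_2}^2)=:K_{x,y,T,M}$, so the inequality becomes $g(t)\le\|y\|_{\mathbb{H}_2}^2+\frac1\delta\int_0^t(-\lambda g(s)+CK_{x,y,T,M})\,ds$. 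Gronwall's lemma then gives $g(t)\le e^{-\lambda t/\delta}\|y\|_{\mathbb{H}_2}^2+\frac{C}{\lambda}K_{x,y,T,M}(1-e^{-\lambda t/\delta})$, whence $\sup_{t\in[0,T]}g(t)\le\|y\|_{\mathbb{H}_2}^2+\frac{C}{\lambda}K_{x,y,T,M}\le C_T(1+\|x\|_{\mathbb{H}_1}^2+\|y\|_{\mathbb{H}_2}^2)$, which is the assertion; the resulting bound is independent of $\delta$ precisely because the decay rate $\lambda/\delta$ exactly balances the $1/\delta$ amplification of the forcing.

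The argument is essentially routine, so there is no serious obstacle; the only two points requiring a little care are (i) that one must use the \emph{strict} dissipativity (\ref{e1}) rather than the bare coercivity $({\mathbf{H}}{\mathbf{3}})$, since only the former gives a decay rate strong enough to make the final bound uniform in $\delta$; and (ii) the localization needed to justify dropping the martingale term before any a priori second-moment bound for $\widehat{Y}^{\varepsilon,\delta}$ is known. The smallness of $\varepsilon$ and $\delta$ enters only through Lemma~\ref{controlestimate}.
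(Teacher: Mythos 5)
Your proposal is correct and follows essentially the same route as the paper: It\^o's formula for $\|\widehat{Y}^{\varepsilon,\delta}_t\|_{\mathbb{H}_2}^2$, the strict dissipativity estimate (\ref{e1}) combined with the uniform bound on $\mathfrak{B}_2$ from $({\mathbf{H}}{\mathbf{4}})$, taking expectations to kill the martingale term, Gronwall with decay rate $\lambda/\delta$ offsetting the $1/\delta$ forcing, and finally Lemma \ref{controlestimate} to bound the $X^{\phi^\varepsilon}$ term. The only (harmless) additions are the explicit localization argument for the stochastic integral and pulling the supremum of $\mathbb{E}\|X^{\phi^\varepsilon}_{s(\zeta)}\|_{\mathbb{H}_1}^2$ out before Gronwall rather than after, which the paper handles via the convolution form of Gronwall's inequality.
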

\begin{proof}
Applying It\^{o}'s formula yields that
\begin{eqnarray*}
\|\hat{Y}^{\varepsilon,\delta}_t\|_{\mathbb{H}_2}^2
=\!\!\!\!\!\!\!\!&&\|y\|_{\mathbb{H}_2}^2+\frac 2\delta\int_0^t\,_{\mathbb{V}_2^*}\langle \mathfrak{A}_2(X^{\phi^\varepsilon}_{s(\zeta)},\hat{Y}^{\varepsilon,\delta}_s),\hat{Y}^{\varepsilon,\delta}_s\rangle_{\mathbb{V}_2}ds+\frac 1{\delta}\int_0^t\|
\mathfrak{B}_2(X^{\phi^\varepsilon}_{s(\zeta)},\hat{Y}^{\varepsilon,\delta}_s)\|_{L_2(\mathbb{U}_2,\mathbb{H}_2)}^2ds\\
\!\!\!\!\!\!\!\!&&+\frac 2{\sqrt{\delta}}\int_0^t\langle \mathfrak{B}_2(X^{\phi^\varepsilon}_{s(\zeta)},\hat{Y}^{\varepsilon,\delta}_s)dW^2_s,\hat{Y}^{\varepsilon,\delta}_s\rangle_{\mathbb{H}_2}.
\end{eqnarray*}
Then taking expectation and by (\ref{e1}), we have
\begin{eqnarray*}\label{aY0}
\frac{d}{dt} \mathbb{E}\|\hat{Y}^{\varepsilon,\delta}_t\|_{\mathbb{H}_2}^2=\!\!\!\!\!\!\!\!&&\frac 1\delta\mathbb{E}\Big[2\,_{\mathbb{V}_2^*}\langle \mathfrak{A}_2(X^{\phi^\varepsilon}_{t(\zeta)},\hat{Y}^{\varepsilon,\delta}_t),\hat{Y}^{\varepsilon,\delta}_t\rangle_{\mathbb{V}_2}
+\|\mathfrak{B}_2(X^{\phi^\varepsilon}_{t(\zeta)},\hat{Y}^{\varepsilon,\delta}_t)\|_{L_2(\mathbb{U}_2,\mathbb{H}_2)}^2\Big]
\\
\leq\!\!\!\!\!\!\!\!&&-\frac \lambda\delta\mathbb{E}\|\hat{Y}^{\varepsilon,\delta}_t\|_{\mathbb{H}_2}^2
+\frac C \delta\big(1+\mathbb{E}\|X^{\phi^\varepsilon}_{t(\zeta)}\|_{\mathbb{H}_1}^2\big).
\end{eqnarray*}
By Gronwall's lemma, we have
\begin{equation*}
\mathbb{E}\|\hat{Y}^{\varepsilon,\delta}_t\|_{\mathbb{H}_2}^2 \leq e^{-\frac \lambda\delta t}\|y\|_{\mathbb{H}_2}^{2}+\frac{C}{\delta} \int_{0}^{t} e^{-\frac\lambda\delta(t-s)}\Big(1+\mathbb{E}\|X^{\phi^\varepsilon}_{s(\zeta)}\|_{\mathbb{H}_1}^2\Big)ds.
\end{equation*}
Using Lemma \ref{controlestimate}, we can get
\begin{equation*}
\sup _{t \in[0, T]} \mathbb{E}\|\widehat{Y}_{t}^{\varepsilon, \delta}\|_{\mathbb{H}_{2}}^{2} \leq C_{T}(1+\|x\|_{\mathbb{H}_{1}}^{2}+\|y\|_{\mathbb{H}_{2}}^{2}).
\end{equation*}
We complete the proof.
\end{proof}

We provide the difference between the fast component $Y^{\phi^\varepsilon}$ of controlled equation (\ref{controlequation}) and the auxiliary process $\hat{Y}^{\varepsilon,\delta}$.
\begin{lemma}\label{FastAuxiliaryDiffLemma}
For $T>0$, $\phi^\varepsilon \in\mathcal A_M,M<\infty$ and $x\in \mathbb{H}_1,\,y\in \mathbb{H}_2$, there is a constant $C_{x,y,T,M}>0$ such that for any $\varepsilon,\delta>0$ small enough,
\begin{equation*}
\mathbb E\int_0^{T}\|Y^{\phi^\varepsilon}_t-\hat{Y}^{\varepsilon,\delta}_t\|_{\mathbb{H}_2}^2dt\leq C_{x,y,T,M}\Big(\zeta+\frac \delta\varepsilon\Big),
\end{equation*}
\end{lemma}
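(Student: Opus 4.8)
The plan is to set $Z_t:=Y^{\phi^\varepsilon}_t-\hat{Y}^{\varepsilon,\delta}_t$, note $Z_0=0$, and apply It\^o's formula to $t\mapsto\|Z_t\|_{\mathbb{H}_2}^2$. Subtracting the equation for $\hat{Y}^{\varepsilon,\delta}$ from that for $Y^{\phi^\varepsilon}$, this produces four contributions: the drift term $\frac{2}{\delta}\int_0^t{}_{\mathbb{V}_2^*}\langle\mathfrak{A}_2(X^{\phi^\varepsilon}_s,Y^{\phi^\varepsilon}_s)-\mathfrak{A}_2(X^{\phi^\varepsilon}_{s(\zeta)},\hat{Y}^{\varepsilon,\delta}_s),Z_s\rangle_{\mathbb{V}_2}\,ds$; the quadratic-variation term $\frac{1}{\delta}\int_0^t\|\mathfrak{B}_2(X^{\phi^\varepsilon}_s,Y^{\phi^\varepsilon}_s)-\mathfrak{B}_2(X^{\phi^\varepsilon}_{s(\zeta)},\hat{Y}^{\varepsilon,\delta}_s)\|_{L_2(\mathbb{U}_2,\mathbb{H}_2)}^2\,ds$; the control term $\frac{2}{\sqrt{\delta\varepsilon}}\int_0^t\langle\mathfrak{B}_2(X^{\phi^\varepsilon}_s,Y^{\phi^\varepsilon}_s)P_2\phi^\varepsilon_s,Z_s\rangle_{\mathbb{H}_2}\,ds$, which appears only because $\hat{Y}^{\varepsilon,\delta}$ carries no control and is exactly what produces the $\delta/\varepsilon$ in the bound; and the stochastic integral $\frac{2}{\sqrt{\delta}}\int_0^t\langle[\mathfrak{B}_2(X^{\phi^\varepsilon}_s,Y^{\phi^\varepsilon}_s)-\mathfrak{B}_2(X^{\phi^\varepsilon}_{s(\zeta)},\hat{Y}^{\varepsilon,\delta}_s)]\,dW^2_s,Z_s\rangle_{\mathbb{H}_2}$.

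First I would combine the drift and quadratic-variation terms through the strict monotonicity $({\mathbf{H}}{\mathbf{2}})$, just as in the proof of Lemma \ref{controlestimate}: since $({\mathbf{H}}{\mathbf{2}})$ bounds $2\,{}_{\mathbb{V}_2^*}\langle\mathfrak{A}_2(u_1,v_1)-\mathfrak{A}_2(u_2,v_2),v_1-v_2\rangle_{\mathbb{V}_2}+\|\mathfrak{B}_2(u_1,v_1)-\mathfrak{B}_2(u_2,v_2)\|_{L_2(\mathbb{U}_2,\mathbb{H}_2)}^2$ by $-\kappa\|v_1-v_2\|_{\mathbb{H}_2}^2+C\|u_1-u_2\|_{\mathbb{H}_1}^2$, the sum of these two terms is dominated by $\frac{1}{\delta}\int_0^t\big(-\kappa\|Z_s\|_{\mathbb{H}_2}^2+C\|X^{\phi^\varepsilon}_s-X^{\phi^\varepsilon}_{s(\zeta)}\|_{\mathbb{H}_1}^2\big)\,ds$; if one prefers to invoke only the matched-slow-argument form of $({\mathbf{H}}{\mathbf{2}})$, insert the intermediate terms $\mathfrak{A}_2(X^{\phi^\varepsilon}_{s(\zeta)},Y^{\phi^\varepsilon}_s)$ and $\mathfrak{B}_2(X^{\phi^\varepsilon}_{s(\zeta)},Y^{\phi^\varepsilon}_s)$ and absorb the remainders with the $\mathbb{H}_1$-Lipschitz continuity of $\mathfrak{B}_2$ in $({\mathbf{H}}{\mathbf{2}})$ and the $L^2$-closeness of $X^{\phi^\varepsilon}_s$ and $X^{\phi^\varepsilon}_{s(\zeta)}$ from Lemma \ref{condiff}. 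For the control term I would write $\frac{1}{\sqrt{\delta\varepsilon}}=\frac{1}{\sqrt{\delta}}\cdot\frac{1}{\sqrt{\varepsilon}}$ and use Cauchy--Schwarz, Young's inequality and the growth bound $\sup_{v}\|\mathfrak{B}_2(u,v)\|_{L_2(\mathbb{U}_2,\mathbb{H}_2)}\le C(1+\|u\|_{\mathbb{H}_1})$ from $({\mathbf{H}}{\mathbf{4}})$ to bound it by $\frac{\gamma}{\delta}\int_0^t\|Z_s\|_{\mathbb{H}_2}^2\,ds+\frac{C_\gamma}{\varepsilon}\int_0^t(1+\|X^{\phi^\varepsilon}_s\|_{\mathbb{H}_1}^2)\|\phi^\varepsilon_s\|_{\mathscr{U}}^2\,ds$ for any $\gamma>0$, then fix $\gamma<\kappa$ so the $\|Z_s\|_{\mathbb{H}_2}^2$ contributions are absorbed into the dissipative term.

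Collecting everything, discarding $\|Z_t\|_{\mathbb{H}_2}^2\ge0$ on the left and taking $t=T$, one reaches
$$\frac{\kappa-\gamma}{\delta}\int_0^T\|Z_s\|_{\mathbb{H}_2}^2\,ds\le\frac{C}{\delta}\int_0^T\|X^{\phi^\varepsilon}_s-X^{\phi^\varepsilon}_{s(\zeta)}\|_{\mathbb{H}_1}^2\,ds+\frac{C}{\varepsilon}\int_0^T(1+\|X^{\phi^\varepsilon}_s\|_{\mathbb{H}_1}^2)\|\phi^\varepsilon_s\|_{\mathscr{U}}^2\,ds+M_T,$$
where $M_T$ is the stochastic-integral term. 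Multiplying by $\delta/(\kappa-\gamma)$ and taking expectations, the factor $\delta$ cancels in the first term on the right while the second acquires a factor $\delta/\varepsilon$; $M_T$ is a true martingale term with zero mean — the required moment integrability of its integrand follows from Lemmas \ref{controlestimate}, \ref{auxiliaryestimate} and $({\mathbf{H}}{\mathbf{4}})$, and can be made rigorous via a stopping-time localization as elsewhere in the paper — so it disappears in expectation. It then remains to apply $\mathbb{E}\int_0^T\|X^{\phi^\varepsilon}_s-X^{\phi^\varepsilon}_{s(\zeta)}\|_{\mathbb{H}_1}^2\,ds\le C_{x,y,T,M}\zeta$ from Lemma \ref{condiff}, and $\mathbb{E}\int_0^T(1+\|X^{\phi^\varepsilon}_s\|_{\mathbb{H}_1}^2)\|\phi^\varepsilon_s\|_{\mathscr{U}}^2\,ds\le M\,\mathbb{E}\big[\sup_{t\in[0,T]}(1+\|X^{\phi^\varepsilon}_t\|_{\mathbb{H}_1}^2)\big]\le C_{x,y,T,M}$, which uses $\phi^\varepsilon\in\mathcal{A}_M$ together with Lemma \ref{controlestimate} with $p=1$; combining gives $\mathbb{E}\int_0^T\|Y^{\phi^\varepsilon}_t-\hat{Y}^{\varepsilon,\delta}_t\|_{\mathbb{H}_2}^2\,dt\le C_{x,y,T,M}(\zeta+\delta/\varepsilon)$.

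I expect the main obstacle to be not any single estimate but the bookkeeping of Step 1: making the dissipation $-\frac{\kappa}{\delta}\|Z\|_{\mathbb{H}_2}^2$ absorb the whole quadratic-variation term even though the two slow arguments $X^{\phi^\varepsilon}_s$ and $X^{\phi^\varepsilon}_{s(\zeta)}$ do not coincide, and tracking the powers of $\delta$ and $1/\sqrt{\delta\varepsilon}$ precisely enough that, after dividing out the $1/\delta$ in front of $\int_0^T\|Z\|_{\mathbb{H}_2}^2$, the right-hand side is exactly $O(\zeta)+O(\delta/\varepsilon)$ with no residual blow-up — the same mechanism as in Lemma \ref{controlestimate}, but now driven by the time-discretization error supplied by Lemma \ref{condiff}.
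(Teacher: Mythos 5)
Your proposal is correct and follows essentially the same route as the paper: the same It\^o decomposition into drift, quadratic-variation, control and martingale terms, the same absorption of the first two via $({\mathbf{H}}{\mathbf{2}})$, the same Young-inequality treatment of the control term with $\gamma<\kappa$, and the same final appeal to Lemmas \ref{controlestimate} and \ref{condiff} after multiplying by $\delta/(\kappa-\gamma)$. Your remark about inserting intermediate terms to apply the matched-slow-argument form of $({\mathbf{H}}{\mathbf{2}})$ is, if anything, slightly more careful than the paper, which applies the condition directly with the two different slow arguments $X_s^{\phi^\varepsilon}$ and $X_{s(\zeta)}^{\phi^\varepsilon}$.
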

\begin{proof}
Note  that
\begin{eqnarray*}
Y^{\phi^\varepsilon}_t-\hat{Y}^{\varepsilon,\delta}_t=\!\!\!\!\!\!\!\!&&\frac 1\delta\int_0^t\big[\mathfrak{A}_2(X^{\phi^\varepsilon}_s,Y^{\phi^\varepsilon}_s)-
\mathfrak{A}_2(X^{\phi^\varepsilon}_{s(\zeta)},\hat{Y}^{\varepsilon,\delta}_s)\big]ds\\
\!\!\!\!\!\!\!\!&&+\frac 1{\sqrt{\varepsilon\delta}}\int_0^t\mathfrak{B}_2(X^{\phi^\varepsilon}_s,Y^{\phi^\varepsilon}_s)P_2\phi^{\varepsilon}_sds \\
\!\!\!\!\!\!\!\!&&+\frac 1{\sqrt{\delta}}\int_0^t\big[\mathfrak{B}_2(X^{\phi^\varepsilon}_s,Y^{\phi^\varepsilon}_s)-
\mathfrak{B}_2(X^{\phi^\varepsilon}_{s(\zeta)},\hat{Y}^{\varepsilon,\delta}_s)\big]dW^2_s.
\end{eqnarray*}
Applying It\^{o}'s formula yields that
\begin{eqnarray}\label{Adiff}
\!\!\!\!\!\!\!\!&&\|Y^{\phi^\varepsilon}_t-\hat{Y}^{\varepsilon,\delta}_t\|_{\mathbb{H}_2}^2\nonumber\\
=\!\!\!\!\!\!\!\!&&\frac 2\delta\int_0^t\,_{\mathbb{V}_2^*}\langle \mathfrak{A}_2(X^{\phi^\varepsilon}_s,Y^{\phi^\varepsilon}_s)-
\mathfrak{A}_2(X^{\phi^\varepsilon}_{s(\zeta)},\hat{Y}^{\varepsilon,\delta}_s),Y^{\phi^\varepsilon}_s-\hat{Y}^{\varepsilon,\delta}_s\rangle_{\mathbb{V}_2}ds\nonumber\\
\!\!\!\!\!\!\!\!&&+\frac 2{\sqrt{\varepsilon\delta}}\int_0^t\langle \mathfrak{B}_2(X^{\phi^\varepsilon}_s,Y^{\phi^\varepsilon}_s)P_2\phi^{\varepsilon}_s,
Y^{\phi^\varepsilon}_s-\hat{Y}^{\varepsilon,\delta}_s\rangle_{\mathbb{H}_2}ds \nonumber\\
\!\!\!\!\!\!\!\!&&+\frac 1{\delta}\int_0^t\|\mathfrak{B}_2(X^{\phi^\varepsilon}_s,Y^{\phi^\varepsilon}_s)-
\mathfrak{B}_2(X^{\phi^\varepsilon}_{s(\zeta)},\hat{Y}^{\varepsilon,\delta}_s)\|_{L_2(\mathbb{U}_2,\mathbb{H}_2)}^2ds\nonumber\\
\!\!\!\!\!\!\!\!&&+\frac 2{\sqrt{\delta}}\int_0^t\langle [\mathfrak{B}_2(X^{\phi^\varepsilon}_s,Y^{\phi^\varepsilon}_s)-
\mathfrak{B}_2(X^{\phi^\varepsilon}_{s(\zeta)},\hat{Y}^{\varepsilon,\delta}_s)]dW^2_s,Y^{\phi^\varepsilon}_s-\hat{Y}^{\varepsilon,\delta}_s\rangle_{\mathbb{H}_2}.
\nonumber\\
=:\!\!\!\!\!\!\!\!&&I(t)+II(t)+III(t)+IV(t).
\end{eqnarray}
By conditions $({\mathbf{H}}{\mathbf{2}})$, $({\mathbf{H}}{\mathbf{4}})$ and Young's inequality, it follows that
\begin{equation}\label{Adiff1}
I(t)+III(t)\leq\frac 1{\delta}\int_0^t\Big(-\kappa\|Y^{\phi^\varepsilon}_s-\hat{Y}^{\varepsilon,\delta}_s\|^2_{\mathbb{H}_2}+C\|X^{\phi^\varepsilon}_s-X^{\phi^\varepsilon}_{s(\zeta)}\|^2_{\mathbb{H}_1}\Big)ds
\end{equation}
and
\begin{eqnarray}\label{Adiff2}
II(t)\leq \!\!\!\!\!\!\!\!&& \frac 2{\sqrt{\varepsilon\delta}} \int_0^t\|\mathfrak{B}_2(X^{\phi^\varepsilon}_s,Y^{\phi^\varepsilon}_s)\|_{L_2(\mathbb{U}_2,\mathbb{H}_2)}\|P_2\|\|\phi_{s}^{\varepsilon}\|_{\mathscr{U}}\|Y^{\phi^\varepsilon}_s-\hat{Y}^{\varepsilon,\delta}_s\|_{\mathbb{H}_2}ds\nonumber\\
\leq \!\!\!\!\!\!\!\!&& \frac{\gamma}{\delta} \int_0^t\|Y^{\phi^\varepsilon}_s-\hat{Y}^{\varepsilon,\delta}_s\|_{\mathbb{H}_2}^2ds+\frac{C}{\varepsilon} \int_0^t(1+\|X_{s}^{\phi^\varepsilon}\|_{\mathbb{H}_{1}}^{2})\|\phi_{s}^{\varepsilon}\|_{\mathscr{U}}^{2}ds,
\end{eqnarray}
where we choose $\gamma\in(0,\kappa).$
Substituting (\ref{Adiff1}) and (\ref{Adiff2}) into (\ref{Adiff}) leads to
\begin{eqnarray*}
\!\!\!\!\!\!\!\!&& \|Y^{\phi^\varepsilon}_t-\hat{Y}^{\varepsilon,\delta}_t\|_{\mathbb{H}_2}^2\\
\leq \!\!\!\!\!\!\!\!&& -\frac{\kappa-\gamma}{\delta} \int_0^t\|Y^{\phi^\varepsilon}_s-\hat{Y}^{\varepsilon,\delta}_s\|_{\mathbb{H}_2}^2ds+\frac{C}{\delta}\int_0^t\|X^{\phi^\varepsilon}_s-X^{\phi^\varepsilon}_{s(\zeta)}\|^2_{\mathbb{H}_1}ds
\\
\!\!\!\!\!\!\!\!&& +\frac 2{\sqrt{\delta}}\int_0^t\langle [\mathfrak{B}_2(X^{\phi^\varepsilon}_s,Y^{\phi^\varepsilon}_s)-
\mathfrak{B}_2(X^{\phi^\varepsilon}_{s(\zeta)},\hat{Y}^{\varepsilon,\delta}_s)]dW^2_s,Y^{\phi^\varepsilon}_s-\hat{Y}^{\varepsilon,\delta}_s\rangle_{\mathbb{H}_2}
\\
\!\!\!\!\!\!\!\!&&
+\frac{C}{\varepsilon} \int_0^t(1+\|X_{s}^{\phi^\varepsilon}\|_{\mathbb{H}_{1}}^{2})\|\phi_{s}^{\varepsilon}\|_{\mathscr{U}}^{2}ds.
\end{eqnarray*}
Taking expectation yields that
\begin{eqnarray*}
\!\!\!\!\!\!\!\!&& \frac{\kappa-\gamma}{\delta} \mathbb E\Big[\int_0^t\|Y^{\phi^\varepsilon}_s-\hat{Y}^{\varepsilon,\delta}_s\|_{\mathbb{H}_2}^2ds\Big]\\
\leq \!\!\!\!\!\!\!\!&& \frac{C}{\delta}\mathbb E\Big[\int_0^t\|X^{\phi^\varepsilon}_s-X^{\phi^\varepsilon}_{s(\zeta)}\|^2_{\mathbb{H}_1}ds\Big]+\frac{C}{\varepsilon} \mathbb E\Big[\int_0^t(1+\|X_{s}^{\phi^\varepsilon}\|_{\mathbb{H}_{1}}^{2})\|\phi_{s}^{\varepsilon}\|_{\mathscr{U}}^{2}ds\Big].
\end{eqnarray*}
Multiplying $\frac{\delta}{\kappa-\gamma}$ on both sides of the above inequality,
\begin{eqnarray*}
\!\!\!\!\!\!\!\!&&\mathbb E\Big[\int_0^T\|Y^{\phi^\varepsilon}_t-\hat{Y}^{\varepsilon,\delta}_t\|_{\mathbb{H}_2}^2dt\Big]\\
\leq \!\!\!\!\!\!\!\!&&C\mathbb E\Big[\int_0^T\|X^{\phi^\varepsilon}_t-X^{\phi^\varepsilon}_{t(\zeta)}\|^2_{\mathbb{H}_1}dt\Big]+C\Big(\frac{\delta}{\varepsilon}\Big) \mathbb E\Big[\int_0^T(1+\|X_{t}^{\phi^\varepsilon}\|_{\mathbb{H}_{1}}^{2})\|\phi_{t}^{\varepsilon}\|_{\mathscr{U}}^{2}dt\Big] \\
\leq \!\!\!\!\!\!\!\!&&C_{T,M}\Big(1+\|x\|_{\mathbb{H}_1}^{\frac{(2\beta_1+4)(\alpha_1-1)+2}{\alpha_1}}+\|y\|_{\mathbb{H}_2}^{\frac{(2\beta_1+4)(\alpha_1-1)+2}{\alpha_1}}\Big)\Big(\zeta+\frac \delta\varepsilon\Big),
\end{eqnarray*}
where the last inequality is due to Lemmas \ref{controlestimate}-\ref{condiff} and $\phi^\varepsilon \in\mathcal A_M.$
\end{proof}

Set a stopping time
$$\tau_{R}^{\varepsilon}:=\inf \Bigg\{t \in[0, T]:\|X_{t}^{\phi^{\varepsilon}}\|_{\mathbb{H}_{1}}+\|\bar{X}_{t}^{\phi}\|_{\mathbb{H}_{1}}+\int_{0}^{t}\|X_{s}^{\phi^{\varepsilon}}\|_{\mathbb{V}_{1}}^{\alpha_{1}}ds+\int_{0}^{t}\|\bar{X}_{s}^{\phi}\|_{\mathbb{V}_{1}}^{\alpha_{1}} d s>R\Bigg\}\wedge T,$$
with the convention $\inf\emptyset=\infty$.

Now, we proceed to prove the criterion (a), namely the convergence of  solutions of controlled equations (\ref{controlequation}) to the solution of skeleton equation (\ref{skletonequation}) in distribution, as $\varepsilon \to 0$. The proof  is divided into three steps:

\vspace{1mm}
\textbf{Step 1}: We first assume that $\phi^\varepsilon$ converges to $\phi$, $\mathbb{P}$-a.s., as  $S_M$-valued random element. Set  ${Z}_{t}^{\varepsilon}:=X_{t}^{\phi^{\varepsilon}}-\bar{X}_{t}^{\phi}$ that solves the following SPDE:
\begin{eqnarray*}
\left\{\begin{aligned}
d {Z}_{t}^{\varepsilon}= & {\ \big[\mathfrak{A}_1(X_{t}^{\phi^{\varepsilon}})-\mathfrak{A}_1(\bar{X}_{t}^{\phi})+\mathfrak{f}(X_{t}^{\phi^{\varepsilon}}, Y_{t}^{\phi^{\varepsilon}})-\bar{ \mathfrak{f}}(\bar{X}_{t}^{\phi})\big] d t } \\
& +\big[\mathfrak{B}_1(X_{t}^{\phi^{\varepsilon}}) P_1\phi_{t}^{\varepsilon}-\mathfrak{B}_1(\bar{X}_{t}^{\phi}) P_1\phi_{t}\big] d t+\sqrt{\varepsilon} \mathfrak{B}_1(X_{t}^{\phi^{\varepsilon}}) d W^1_{t}, \\
{Z}_{0}^{\varepsilon}=&~~ 0 .
\end{aligned}\right.
\end{eqnarray*}
Applying It\^{o}'s formula yields that
\begin{eqnarray*}
\!\!\!\!\!\!\!\!&&\|{Z}_{t}^{\varepsilon}\|_{\mathbb{H}_{1}}^{2}\\
=\!\!\!\!\!\!\!\!&&2 \int_{0}^{t}\,_{\mathbb{V}_1^*}\langle \mathfrak{A}_1(X_{s}^{\phi^{\varepsilon}})-\mathfrak{A}_1(\bar{X}_{s}^{\phi}), Z_{s}^{\varepsilon}\rangle_{\mathbb{V}_{1}} d s
+2 \int_{0}^{t}\langle \mathfrak{f}(X_{s}^{\phi^{\varepsilon}}, Y_{s}^{\phi^{\varepsilon}})-\bar{\mathfrak{f}}(\bar{X}_{s}^{\phi}), Z_{s}^{\varepsilon}\rangle_{\mathbb{H}_{1}} d s \\
\!\!\!\!\!\!\!\!&&+2 \int_{0}^{t}\langle \mathfrak{B}_1(X_{s}^{\phi^{\varepsilon}})P_1 \phi_{s}^{\varepsilon}-\mathfrak{B}_1(\bar{X}_{s}^{\phi})P_1 \phi_{s},Z_{s}^{\varepsilon}\rangle_{\mathbb{H}_{1}} d s+\varepsilon \int_{0}^{t}\|\mathfrak{B}_1(X_{s}^{\phi^{\varepsilon}})\|_{L_{2}(\mathbb{U}_1, \mathbb{H}_{1})}^{2} d s \\
\!\!\!\!\!\!\!\!&&+2 \sqrt{\varepsilon} \int_{0}^{t}\langle Z_{s}^{\varepsilon}, \mathfrak{B}_1(X_{s}^{\phi^{\varepsilon}}) d W^1_{s}\rangle_{\mathbb{H}_{1}} .
\end{eqnarray*}
Then we can get that
\begin{eqnarray}\label{aall}
\!\!\!\!\!\!\!\!&&\|Z_{t}^{\varepsilon}\|_{\mathbb{H}_{1}}^{2} \nonumber\\
=\!\!\!\!\!\!\!\!&&2 \int_{0}^{t} \,_{\mathbb{V}_{1}^{*}}\langle \mathfrak{A}_1(X_{s}^{\phi^{\varepsilon}})-\mathfrak{A}_1(\bar{X}_{s}^{\phi}), Z_{s}^{\varepsilon}\rangle_{\mathbb{V}_{1}} d s \nonumber\\
\!\!\!\!\!\!\!\!&&+2 \int_{0}^{t}\langle\bar{\mathfrak{f}}(X_{s}^{\phi^{\varepsilon}})-\bar{\mathfrak{f}}(\bar{X}_{s}^{\phi}), Z_{s}^{\varepsilon}\rangle_{\mathbb{H}_{1}} d s \nonumber\\
\!\!\!\!\!\!\!\!&&+2 \int_{0}^{t}\langle \mathfrak{f}(X_{s}^{\phi^{\varepsilon}}, Y_{s}^{\phi^{\varepsilon}})-\bar{\mathfrak{f}}(X_{s}^{\phi^{\varepsilon}})-\mathfrak{f}(X_{s(\zeta)}^{\phi^{\varepsilon}}, \widehat{Y}_{s}^{\varepsilon, \delta})+\bar{\mathfrak{f}}(X_{s(\zeta)}^{\phi^{\varepsilon}}), Z_{s}^{\varepsilon}\rangle_{\mathbb{H}_{1}} d s \nonumber\\
\!\!\!\!\!\!\!\!&&+2 \int_{0}^{t}\langle \mathfrak{f}(X_{s(\zeta)}^{\phi^{\varepsilon}}, \widehat{Y}_{s}^{\varepsilon, \delta})-\bar{\mathfrak{f}}(X_{s(\zeta)}^{\phi^{\varepsilon}}), Z_{s}^{\varepsilon}-Z_{s(\zeta)}^{\varepsilon}\rangle_{\mathbb{H}_{1}} d s \nonumber\\
\!\!\!\!\!\!\!\!&&+2 \int_{0}^{t}\langle[\mathfrak{B}_1(X_{s}^{\phi^{\varepsilon}})-\mathfrak{B}_1(\bar{X}_{s}^{\phi})]P_1\phi_{s}^{\varepsilon}, Z_{s}^{\varepsilon}\rangle_{\mathbb{H}_{1}} d s \nonumber\\
\!\!\!\!\!\!\!\!&&+2 \int_{0}^{t}\langle \mathfrak{f}(X_{s(\zeta)}^{\phi^{\varepsilon}}, \widehat{Y}_{s}^{\varepsilon, \delta})-\bar{\mathfrak{f}}(X_{s(\zeta)}^{\phi^{\varepsilon}}), Z_{s(\zeta)}^{\varepsilon}\rangle_{\mathbb{H}_{1}} d s \nonumber\\
\!\!\!\!\!\!\!\!&&+2 \int_{0}^{t}\langle \mathfrak{B}_1(\bar{X}_{s}^{\phi})P_1(\phi_{s}^{\varepsilon}-\phi_{s}), Z_{s}^{\varepsilon}\rangle_{\mathbb{H}_{1}} d s \nonumber\\
\!\!\!\!\!\!\!\!&&+\varepsilon \int_{0}^{t}\|\mathfrak{B}_1(X_{s}^{\phi^{\varepsilon}})\|_{L_{2}(\mathbb{U}_1, \mathbb{H}_{1})}^{2} d s+2 \sqrt{\varepsilon} \int_{0}^{t}\langle Z_{s}^{\varepsilon}, \mathfrak{B}_1(X_{s}^{\phi^{\varepsilon}}) d W^1_{s}\rangle_{\mathbb{H}_{1}} \nonumber\\
=:\!\!\!\!\!\!\!\!&& \sum_{i=1}^{9} K_{i}(t) .
\end{eqnarray}
First, we can estimate $K_{i}(t),1\leq i\leq 5$. By condition $({\mathbf{A}}{\mathbf{2}})$ and Young's inequality, it follows that
\begin{eqnarray}\label{a1}
\!\!\!\!\!\!\!\!&&K_{1}(t)+K_{5}(t)+K_{8}(t) \nonumber\\
\leq\!\!\!\!\!\!\!\!&& \int_{0}^{t} \Big(2_{\mathbb{V}_{1}^{*}}\langle \mathfrak{A}_1(X_{s}^{\phi^{\varepsilon}})-\mathfrak{A}_1(\bar{X}_{s}^{\phi}), Z_{s}^{\varepsilon}\rangle_{\mathbb{V}_{1}}+\|\mathfrak{B}_1(X_{s}^{\phi^{\varepsilon}})-\mathfrak{B}_1(\bar{X}_{s}^{\phi})\|_{L_{2}(\mathbb{U}_1, \mathbb{H}_{1})}^{2}\Big) d s \nonumber\\ \!\!\!\!\!\!\!\!&&+\int_{0}^{t}\|P_1\|^2\|\phi_{s}^{\varepsilon}\|_{\mathscr{U}}^{2}\|Z_{s}^{\varepsilon}\|_{\mathbb{H}_{1}}^{2} d s+\varepsilon\int_{0}^{t}(1+\|X_{s}^{\phi^{\varepsilon}}\|_{\mathbb{H}_{1}}^2)ds \nonumber\\
\leq\!\!\!\!\!\!\!\!&&\int_{0}^{t}(C+\rho(X_{s}^{\phi^{\varepsilon}})+\eta(\bar{X}_{s}^{\phi}))\|Z_{s}^{\varepsilon}\|_{\mathbb{H}_{1}}^{2} d s+\int_{0}^{t}\|\phi_{s}^{\varepsilon}\|_{\mathscr{U}}^{2}\|Z_{s}^{\varepsilon}\|_{\mathbb{H}_{1}}^{2} d s+C_{x,y,T,M}\varepsilon .
\end{eqnarray}
By the Lipschitz continuity of $\mathfrak{f}$ and $\bar{\mathfrak{f}}$ (see Step 1 in the proof of Lemma \ref{skletonproof} in Appendix) and Young's inequality, we have
\begin{eqnarray}\label{a2}
\!\!\!\!\!\!\!\!&&K_{2}(t)+K_{3}(t)\nonumber\\
\leq\!\!\!\!\!\!\!\!&&C \int_{0}^{t}\|\bar{\mathfrak{f}}(X_{s}^{\phi^{\varepsilon}})-\bar{\mathfrak{f}}(\bar{X}_{s}^{\phi})\|_{\mathbb{H}_{1}}\|Z_{s}^{\varepsilon}\|_{\mathbb{H}_{1}} d s\nonumber\\
\!\!\!\!\!\!\!\!&&+C \int_{0}^{t}\|\mathfrak{f}(X_{s}^{\phi^{\varepsilon}}, Y_{s}^{\phi^{\varepsilon}})-\bar{\mathfrak{f}}(X_{s}^{\phi^{\varepsilon}})-\mathfrak{f}(X_{s(\zeta)}^{\phi^{\varepsilon}}, \widehat{Y}_{s}^{\varepsilon, \delta})+\bar{\mathfrak{f}}(X_{s(\zeta)}^{\phi^{\varepsilon}})\|_{\mathbb{H}_{1}}\|Z_{s}^{\varepsilon}\|_{\mathbb{H}_{1}} d s \nonumber\\
\leq\!\!\!\!\!\!\!\!&& C \int_{0}^{t}\|Z_{s}^{\varepsilon}\|_{\mathbb{H}_{1}}^{2} d s+C \int_{0}^{t}(\|X_{s}^{\phi^{\varepsilon}}-X_{s(\zeta)}^{\phi^{\varepsilon}}\|_{\mathbb{H}_{1}}^{2}+\|Y_{s}^{\phi^{\varepsilon}}-\widehat{Y}_{s}^{\varepsilon, \delta}\|_{\mathbb{H}_{2}}^{2}) d s
\nonumber\\
\leq\!\!\!\!\!\!\!\!&& C \int_{0}^{t}\|Z_{s}^{\varepsilon}\|_{\mathbb{H}_{1}}^{2} d s+
C_{x,y,T,M}\Big(\zeta+\frac \delta\varepsilon\Big).
\end{eqnarray}
where we used Lemmas \ref{condiff} and \ref{FastAuxiliaryDiffLemma} in the last step. Using H\"older's inequality, we can get
\begin{eqnarray}\label{a3}
K_{4}(t)\leq\!\!\!\!\!\!\!\!&& C \int_{0}^{t} \|\mathfrak{f}(X_{s(\zeta)}^{\phi^{\varepsilon}}, \widehat{Y}_{s}^{\varepsilon, \delta})-\bar{\mathfrak{f}}(X_{s(\zeta)}^{\phi^{\varepsilon}})\|_{\mathbb{H}_{1}}\|Z_{s}^{\varepsilon}-Z_{s(\zeta)}^{\varepsilon}\|_{\mathbb{H}_{1}} d s \nonumber\\
\leq\!\!\!\!\!\!\!\!&& C\Big[\int_{0}^{t}(\|\mathfrak{f}(X_{s(\zeta)}^{\phi^{\varepsilon}}, \widehat{Y}_{s}^{\varepsilon, \delta})\|_{\mathbb{H}_{1}}^{2}+\|\bar{\mathfrak{f}}(X_{s(\zeta)}^{\phi^{\varepsilon}})\|_{\mathbb{H}_{1}}^{2}) d s\Big]^{\frac{1}{2}} \nonumber\\
\!\!\!\!\!\!\!\!&&\cdot {\Big[\int_{0}^{t}(\|X_{s}^{\phi^{\varepsilon}}-X_{s(\zeta)}^{\phi^{\varepsilon}}\|_{\mathbb{H}_{1}}^{2}+\|\bar{X}_{s}^{\phi}-\bar{X}_{s(\zeta)}^{\phi}\|_{\mathbb{H}_{1}}^{2}) d s\Big]^{\frac{1}{2}} } \nonumber\\
\leq\!\!\!\!\!\!\!\!&& C \Big[\int_{0}^{t}(1+\|X_{s(\zeta)}^{\phi^{\varepsilon}}\|_{\mathbb{H}_{1}}^{2}+\|\widehat{Y}_{s}^{\varepsilon, \delta}\|_{\mathbb{H}_{1}}^{2}) d s\Big]^{\frac{1}{2}} \nonumber\\
\!\!\!\!\!\!\!\!&&\cdot \Big[\int_{0}^{t}(\|X_{s}^{\phi^{\varepsilon}}-X_{s(\zeta)}^{\phi^{\varepsilon}}\|_{\mathbb{H}_{1}}^{2}+\|\bar{X}_{s}^{\phi}-\bar{X}_{s(\zeta)}^{\phi}\|_{\mathbb{H}_{1}}^{2}) d s\Big]^{\frac{1}{2}}
\nonumber\\
\leq\!\!\!\!\!\!\!\!&&
C_{x,y,T,M}\Big(\zeta+\frac \delta\varepsilon\Big)^{\frac{1}{2}}.
\end{eqnarray}
Substituting (\ref{a1})-(\ref{a3}) into (\ref{aall}) yields that
\begin{eqnarray*}
\|Z_{t}^{\varepsilon}\|_{\mathbb{H}_{1}}^{2}
\leq\!\!\!\!\!\!\!\!&& C_{x,y,T,M}\Big[\Big(\zeta+\frac \delta\varepsilon\Big)^{\frac{1}{2}}+\varepsilon\Big]+C \int_{0}^{t}(1+\rho(X_{s}^{\phi^{\varepsilon}})+\eta(\bar{X}_{s}^{\phi})+\|\phi_{s}^{\varepsilon}\|_{\mathscr{U}}^{2})\|Z_{s}^{\varepsilon}\|_{\mathbb{H}_{1}}^{2} d s
\\
\!\!\!\!\!\!\!\!&&+2 \int_{0}^{t}\langle \mathfrak{f}(X_{s(\zeta)}^{\phi^{\varepsilon}}, \widehat{Y}_{s}^{\varepsilon, \delta})-\bar{\mathfrak{f}}(X_{s(\zeta)}^{\phi^{\varepsilon}}), Z_{s(\zeta)}^{\varepsilon}\rangle_{\mathbb{H}_{1}} d s \nonumber\\
\!\!\!\!\!\!\!\!&&+2 \int_{0}^{t}\langle \mathfrak{B}_1(\bar{X}_{s}^{\phi})P_1(\phi_{s}^{\varepsilon}-\phi_{s}), Z_{s}^{\varepsilon}\rangle_{\mathbb{H}_{1}} d s \nonumber\\
\!\!\!\!\!\!\!\!&&+2 \sqrt{\varepsilon} \Big|\int_{0}^{t}\langle Z_{s}^{\varepsilon}, \mathfrak{B}_1(X_{s}^{\phi^{\varepsilon}}) d W^1_{s}\rangle_{\mathbb{H}_{1}}\Big|.
\end{eqnarray*}
Then we can use Gronwall's lemma and the definition of $\tau_{R}^{\varepsilon}$ to get
\begin{eqnarray*}
\!\!\!\!\!\!\!\!&&\sup _{t \in[0, T \wedge \tau_{R}^{\varepsilon}]}\|Z_{t}^{\varepsilon}\|_{\mathbb{H}_{1}}^{2}\\
\leq \!\!\!\!\!\!\!\!&& C_{R,x,y,T,M}\Bigg\{\Big[\Big(\zeta+\frac \delta\varepsilon\Big)^{\frac{1}{2}}+\varepsilon\Big]+\sqrt{\varepsilon} \sup _{t \in[0, T \wedge \tau_{R}^{\varepsilon}]}\Big|\int_{0}^{t}\langle Z_{t}^{\varepsilon}, \mathfrak{B}_1(X_{t}^{\phi^{\varepsilon}}) d W^1_{t}\rangle_{\mathbb{H}_{1}}\Big| \\
\!\!\!\!\!\!\!\!&&+\sup _{t \in[0, T \wedge \tau_{R}^{\varepsilon}]}\Big|\int_{0}^{t}\langle \mathfrak{B}_1(\bar{X}_{s}^{\phi})P_1(\phi_{s}^{\varepsilon}-\phi_{s}), Z_{s}^{\varepsilon}\rangle_{\mathbb{H}_{1}} d s\Big| \\
\!\!\!\!\!\!\!\!&&+\sup _{t \in [0, T \wedge \tau_{R}^{\varepsilon}]}\Big|\int_{0}^{t}\langle \mathfrak{f}(X_{s(\zeta)}^{\phi^{\varepsilon}}, \widehat{Y}_{s}^{\varepsilon, \delta})-\bar{\mathfrak{f}}(X_{s(\zeta)}^{\phi^{\varepsilon}}), Z_{s(\zeta)}^{\varepsilon}\rangle_{\mathbb{H}_{1}} d s\Big|\Bigg\}.
\end{eqnarray*}
Taking expectation  we obtain
\begin{equation}\label{aaall}
\mathbb{E}\Big[\sup _{t \in[0, T \wedge \tau_{R}^{\varepsilon}]}\|Z_{t}^{\varepsilon}\|_{\mathbb{H}_{1}}^{2}\Big]\leq C_{R,x,y,T,M}\Bigg\{\Big[\Big(\zeta+\frac \delta\varepsilon\Big)^{\frac{1}{2}}+\varepsilon\Big]+ \sum_{i=1}^{3} \widetilde{K}_{i}\Bigg\},
\end{equation}
where
\begin{eqnarray*}
\!\!\!\!\!\!\!\!&&\widetilde{K}_{1}:=\sqrt{\varepsilon} \mathbb{E}\Big[\sup _{t \in[0, T \wedge \tau_{R}^{\varepsilon}]}\Big|\int_{0}^{t}\langle Z_{t}^{\varepsilon}, \mathfrak{B}_1(X_{t}^{\phi^{\varepsilon}}) d W^1_{t}\rangle_{\mathbb{H}_{1}}\Big| \Big];
\nonumber\\
\!\!\!\!\!\!\!\!&&\widetilde{K}_{2}:=\mathbb{E}\Big[\sup _{t \in[0, T \wedge \tau_{R}^{\varepsilon}]}\Big|\int_{0}^{t}\langle \mathfrak{B}_1(\bar{X}_{s}^{\phi})P_1(\phi_{s}^{\varepsilon}-\phi_{s}), Z_{s}^{\varepsilon}\rangle_{\mathbb{H}_{1}} d s\Big|\Big];
\nonumber\\
\!\!\!\!\!\!\!\!&&\widetilde{K}_{3}:=\mathbb{E}\Big[\sup _{t \in [0, T \wedge \tau_{R}^{\varepsilon}]}\Big|\int_{0}^{t}\langle \mathfrak{f}(X_{s(\zeta)}^{\phi^{\varepsilon}}, \widehat{Y}_{s}^{\varepsilon, \delta})-\bar{\mathfrak{f}}(X_{s(\zeta)}^{\phi^{\varepsilon}}), Z_{s(\zeta)}^{\varepsilon}\rangle_{\mathbb{H}_{1}} d s\Big|\Big]\Big\}.
\end{eqnarray*}
By Burkholder--Davis--Gundy inequality and Young's inequality,
\begin{eqnarray}\label{K23}
\widetilde{K}_{1}
\leq\!\!\!\!\!\!\!\!&& C \sqrt{\varepsilon} \mathbb{E}\Big(\int_{0}^{T \wedge \tau_{R}^{\varepsilon}}\|Z_{t}^{\varepsilon}\|_{\mathbb{H}_{1}}^{2}\|\mathfrak{B}_1(X_{t}^{\phi^{\varepsilon}})\|_{L_{2}(\mathbb{U}_1, \mathbb{H}_{1})}^{2} d t\Big)^{\frac{1}{2}}
 \nonumber\\
\leq\!\!\!\!\!\!\!\!&& C\sqrt{\varepsilon} \mathbb{E}\Big(\sup _{t \in[0, T \wedge \tau_{R}^{\varepsilon}]}\|\mathfrak{B}_1(X_{t}^{\phi^{\varepsilon}})\|_{L_{2}(\mathbb{U}_1, \mathbb{H}_{1})}^{2} \cdot \int_{0}^{T \wedge \tau_{R}^{\varepsilon}}\|Z_{t}^{\varepsilon}\|_{\mathbb{H}_{1}}^{2} d t\Big)^{\frac{1}{2}}
\nonumber\\
\leq\!\!\!\!\!\!\!\!&& C\varepsilon  \mathbb{E}\Big[\sup _{t \in[0, T \wedge \tau_{R}^{\varepsilon}]}(1+\|X_{t}^{\phi^{\varepsilon}}\|_{\mathbb{H}_{1}}^{2})\Big]+C \mathbb{E} \int_{0}^{T \wedge \tau_{R}^{\varepsilon}}\|Z_{t}^{\varepsilon}\|_{\mathbb{H}_{1}}^{2} d t \nonumber\\
\leq\!\!\!\!\!\!\!\!&&C_{x,y,T,M}\varepsilon+C  \int_{0}^{T}\mathbb{E}\Big[\sup_{s\in[0,t\wedge \tau_{R}^{\varepsilon}]}\|Z_{s}^{\varepsilon}\|_{\mathbb{H}_{1}}^{2}\Big] d t.
\end{eqnarray}
Combining (\ref{aaall})-(\ref{K23}), using Gronwall' lemma again, we derive
\begin{equation}\label{aaall2}
\mathbb{E}\Big[\sup _{t \in[0, T \wedge \tau_{R}^{\varepsilon}]}\|Z_{t}^{\varepsilon}\|_{\mathbb{H}_{1}}^{2}\Big]\leq C_{R,x,y,T,M}\Bigg\{\Big[\Big(\zeta+\frac \delta\varepsilon\Big)^{\frac{1}{2}}+\varepsilon\Big]+\sum_{i=2}^{3} \widetilde{K}_{i}\Bigg\}.
\end{equation}
Assume that the following convergence holds for fixed $R>0$, whose proof is given later,
\begin{equation}\label{aaall3}
\sum_{i=2}^{3} \widetilde{K}_{i}\to0,~\text{as}~\varepsilon\to 0.
\end{equation}
By Chebyshev's inequality and H\"older's inequality, it is clear that for any $\epsilon_0>0$,
\begin{eqnarray}\label{aaall4}
\!\!\!\!\!\!\!\!&& \mathbb{P}\Big\{\Big(\sup _{t \in[0, T]}\|Z_{t}^{\varepsilon}\|_{\mathbb{H}_{1}}^{2}\Big)^{\frac{1}{2}}>\epsilon_{0}\Big\} \nonumber\\
\leq \!\!\!\!\!\!\!\!&& \frac{1}{\epsilon_{0}} \mathbb{E}\Big[\Big(\sup _{t \in[0, T]}\|Z_{t}^{\varepsilon}\|_{\mathbb{H}_{1}}^{2}\Big)^{\frac{1}{2}} \mathbf{1}_{\{T \leq \tau_{R}^{\varepsilon}\}}\Big]+\frac{1}{\epsilon_{0}} \mathbb{E}\Big[\Big(\sup _{t \in[0, T]}\|Z_{t}^{\varepsilon}\|_{\mathbb{H}_{1}}^{2}\Big)^{\frac{1}{2}} \mathbf{1}_{\{T>\tau_{R}^{\varepsilon}\}}\Big] \nonumber\\
\leq \!\!\!\!\!\!\!\!&& \frac{1}{\epsilon_{0}} \mathbb{E}\Big[\Big(\sup _{t \in[0, T]}\|Z_{t}^{\varepsilon}\|_{\mathbb{H}_{1}}^{2}\Big)^{\frac{1}{2}} \mathbf{1}_{\{T \leq \tau_{R}^{\varepsilon}\}}\Big]+\frac{1}{\epsilon_{0}}\Big[\mathbb{E}\Big(\sup _{t \in[0, T]}\|Z_{t}^{\varepsilon}\|_{\mathbb{H}_{1}}^{2}\Big)\Big]^{\frac{1}{2}}\Big[\mathbb{P}\left(T>\tau_{R}^{\varepsilon}\right)\Big]^{\frac{1}{2}} \nonumber\\
\leq \!\!\!\!\!\!\!\!&& \frac{1}{\epsilon_{0}} \mathbb{E}\Big[\Big(\sup _{t \in[0, T]}\|Z_{t}^{\varepsilon}\|_{\mathbb{H}_{1}}^{2}\Big)^{\frac{1}{2}} \mathbf{1}_{\{T \leq \tau_{R}^{\varepsilon}\}}\Big]+\frac{C_{x,y,T,M}}{\epsilon_{0} \sqrt{R}}.
\end{eqnarray}
Combining (\ref{aaall2})-(\ref{aaall4}) implies that
\begin{equation}\label{aaall5}
\lim_{R\to \infty }\limsup_{\varepsilon \to 0} \mathbb{P}\Big\{\Big(\sup _{t \in[0, T]}\|Z_{t}^{\varepsilon}\|_{\mathbb{H}_{1}}^{2}\Big)^{\frac{1}{2}}>\varepsilon_{0}\Big\}
\leq\lim_{R\to \infty }\frac{C_{x,y,T,M}}{\sqrt{R}}=0,
\end{equation}
where we used the fact $\frac \delta\varepsilon\to 0$ and choose a suitable $\zeta$ such that $\zeta\to 0$, as $\varepsilon\to 0$.

Now, we consider the case that $\phi^\varepsilon$ converges in distribution to $\phi$.  Recall that $S_M$ is compact and the distribution of $W$ is tight, thus $(\phi^\varepsilon,W)$ is tight in $S_M\times C([0,T];\mathscr{U}_0)$.  Applying Prokhorov theorem and Skorohod's representation theorem, we deduce that there exist  a subsequence  denoted again by $\{\varepsilon\}$ of $\{\varepsilon\}$, a probability space $(\tilde{\Omega},\tilde{\mathscr{F}},\tilde{\mathbb{P}})$ and, on this space, random variables $(\tilde{\phi}^{\varepsilon},\tilde{W}^{\varepsilon})$ and $(\tilde{\phi},\tilde{W})$ such that
\begin{eqnarray*}
\!\!\!\!\!\!\!\!&&\text{(i)}~~(\tilde{\phi}^{\varepsilon},\tilde{W}^{\varepsilon})~\text{converges~to}~(\tilde{\phi},\tilde{W})~\text{in}~S_M\times C([0,T];\mathscr{U}_0),~\tilde{\mathbb{P}}\text{-a.s.,}~\text{as}~\varepsilon\to 0,
 \nonumber\\
\!\!\!\!\!\!\!\!&&\text{(ii)}~~(\tilde{\phi}^{\varepsilon},\tilde{W}^{\varepsilon})~\text{coincides in distribution with}~(\phi^{\varepsilon},W).
\end{eqnarray*}
In view of (\ref{aaall5}), we can replace $(\phi^{\varepsilon},W)$ by $(\tilde{\phi}^{\varepsilon},\tilde{W}^{\varepsilon})$ and obtain
$$\mathcal{G}^{\varepsilon}\Big(\tilde{W}^{\varepsilon}_{\cdot}+\frac 1{\sqrt{\varepsilon}}\int_0^\cdot\tilde{\phi}^{\varepsilon}_s ds\Big)\to \bar{X}_{s}^{\tilde{\phi}},~\text{in probability}.$$
Based on this, we finally get the desired convergence, i.e.,
$$\mathcal{G}^{\varepsilon}\Big(W_{\cdot}+\frac 1{\sqrt{\varepsilon}}\int_0^\cdot\phi^{\varepsilon}_s ds\Big)\to \bar{X}_{s}^{\phi},~\text{in distribution},$$
which completes the verification of (a).

\vspace{1mm}
\textbf{Step 2}:
In this step, we prove the convergence of the term $\widetilde{K}_{2}$ in (\ref{aaall3}). Notice that
\begin{equation}\label{a2all}
\widetilde{K}_{2} \leq \sum_{i=1}^{3} \widetilde{K}_{2i}+\mathbb{E}(\widehat{K}_{24}),
\end{equation}
where
\begin{eqnarray*}
\!\!\!\!\!\!\!\!&&\widetilde{K}_{21}:=\mathbb{E}\Big[\sup _{t \in[0, T \wedge \tau_{R}^{\varepsilon}]}\Big|\int_{0}^{t}\langle \mathfrak{B}_1(\bar{X}_{s}^{\phi})P_1(\phi_{s}^{\varepsilon}-\phi_{s}), Z_{s}^{\varepsilon}-Z_{s(\zeta)}^{\varepsilon}\rangle_{\mathbb{H}_{1}} d s\Big|\Big], \\
\!\!\!\!\!\!\!\!&&\widetilde{K}_{22}:=\mathbb{E}\Big[\sup _{t \in[0, T \wedge \tau_{R}^{\varepsilon}]}\Big|\int_{0}^{t}\langle(\mathfrak{B}_1(\bar{X}_{s}^{\phi})-\mathfrak{B}_1(\bar{X}_{s(\zeta)}^{\phi}))P_1(\phi_{s}^{\varepsilon}-\phi_{s}), Z_{s(\zeta)}^{\varepsilon}\rangle_{\mathbb{H}_{1}} d s\Big|\Big], \\
\!\!\!\!\!\!\!\!&&\widetilde{K}_{23}:=\mathbb{E}\Big[\sup _{t \in[0, T \wedge \tau_{R}^{\varepsilon}]}\Big|\int_{t(\zeta)}^{t}\langle \mathfrak{B}_1(\bar{X}_{s(\zeta)}^{\phi})P_1(\phi_{s}^{\varepsilon}-\phi_{s}), Z_{s(\zeta)}^{\varepsilon}\rangle_{\mathbb{H}_{1}} d s\Big|\Big], \\
\!\!\!\!\!\!\!\!&&\widehat{K}_{24}:=\sum_{k=0}^{[(T \wedge \tau_{R}^{\varepsilon}) / \zeta]-1}\Big|\langle \mathfrak{B}_1(\bar{X}_{k \zeta}^{\phi}) \int_{k \zeta}^{(k+1) \zeta}P_1(\phi_{s}^{\varepsilon}-\phi_{s}) d s, Z_{k \zeta}^{\varepsilon}\rangle_{\mathbb{H}_{1}}\Big| .
\end{eqnarray*}
For $\widetilde{K}_{21}$ and $\widetilde{K}_{23},$ using condition $({\mathbf{A}}{\mathbf{5}})$ and H\"older's inequality we can get
\begin{eqnarray}\label{a21}
\widetilde{K}_{21} \leq \!\!\!\!\!\!\!\!&& \mathbb{E}\Big[\int_{0}^{T \wedge \tau_{R}^{\varepsilon}}\|\mathfrak{B}_1(\bar{X}_{t}^{\phi})\|_{L_{2}(\mathbb{U}_1, \mathbb{H}_{1})}\|P_1\|\|\phi_{t}^{\varepsilon}-\phi_{t}\|_{\mathscr{U}}\|Z_{t}^{\varepsilon}-Z_{t(\zeta)}^{\varepsilon}\|_{\mathbb{H}_{1}} d t\Big] \nonumber\\
\leq \!\!\!\!\!\!\!\!&& \Big\{\mathbb{E}\Big[\int_{0}^{T \wedge \tau_{R}^{\varepsilon}}\|\mathfrak{B}_1(\bar{X}_{t}^{\phi})\|_{L_{2}(\mathbb{U}_1, \mathbb{H}_{1})}^{2}\|P_1\|^2\|\phi_{t}^{\varepsilon}-\phi_{t}\|_{\mathscr{U}}^{2} d t\Big]\Big\}^{\frac{1}{2}}
\nonumber\\
 \!\!\!\!\!\!\!\!&&\cdot
\Big\{\mathbb{E}\Big[\int_{0}^{T \wedge \tau_{R}^{\varepsilon}}\|Z_{t}^{\varepsilon}-Z_{t(\zeta)}^{\varepsilon}\|_{\mathbb{H}_{1}}^{2} d t\Big]\Big\}^{\frac{1}{2}} \nonumber\\
\leq \!\!\!\!\!\!\!\!&& C\Big\{\mathbb{E}\Big[\int_{0}^{T } (1+\|\bar{X}_{t}^{\phi}\|_{\mathbb{H}_{1}}^{2})\|\phi_{t}^{\varepsilon}-\phi_{t}\|_{\mathscr{U}}^{2} d t\Big]\Big\}^{\frac{1}{2}} \nonumber\\
\!\!\!\!\!\!\!\!&& \cdot\Big\{\mathbb{E}\Big[\int_{0}^{T } (\|\bar{X}_{t}^{\phi}-\bar{X}_{t(\zeta)}^{\phi}\|_{\mathbb{H}_{1}}^{2}+\|X_{t}^{\phi^\varepsilon}-X_{t(\zeta)}^{\phi^\varepsilon}\|_{\mathbb{H}_{1}}^{2}) d t\Big]\Big\}^{\frac{1}{2}} \nonumber\\
\leq \!\!\!\!\!\!\!\!&& C_{x,y,T,M} \zeta^{\frac{1}{2}}\Big\{\mathbb{E}\Big[\int_{0}^{T}\|\phi_{t}^{\varepsilon}-\phi_{t}\|_{\mathscr{U}}^{2} d t\Big]\Big\}^{\frac{1}{2}} \nonumber\\
\leq \!\!\!\!\!\!\!\!&& C_{x,y,T,M} \zeta^{\frac{1}{2}},
\end{eqnarray}
where we used Lemma \ref{condiff} and Lemmas \ref{skletonproof}-\ref{MonoSkePrioLemm} in Appendix.
Similarly,
\begin{eqnarray}\label{a22}
\widetilde{K}_{23} \leq \!\!\!\!\!\!\!\!&&
\Big\{\mathbb{E} \sup _{t \in[0, T \wedge \tau_{R}^{\varepsilon}]}\Big|\int_{t(\zeta)}^{t}\|\mathfrak{B}_1(\bar{X}_{s(\zeta)}^{\phi})\|_{L_{2}(\mathbb{U}_1, \mathbb{H}_{1})}\|P_1\|\|\phi_{s}^{\varepsilon}-\phi_{s}\|_{\mathscr{U}} d s\Big|^{2}\Big\}^{\frac{1}{2}} \nonumber\\
\!\!\!\!\!\!\!\!&&\cdot\Big\{\mathbb{E}\Big[\sup _{t \in[0, T \wedge\tau_{R}^{\varepsilon}]}\|X_{t}^{\phi^{\varepsilon}}-\bar{X}_{t}^{\phi}\|_{\mathbb{H}_{1}}^{2}\Big]\Big\}^{\frac{1}{2}} \nonumber\\
\leq\!\!\!\!\!\!\!\!&& \zeta^{\frac{1}{2}}\Big\{\mathbb{E} \int_{0}^{T }(1+\|\bar{X}_{s(\zeta)}^{\phi}\|_{\mathbb{H}_{1}}^{2})\|\phi_{s}^{\varepsilon}-\phi_{s}\|_{\mathscr{U}}^{2} d s\Big\}^{\frac{1}{2}}\Big\{\mathbb{E}\Big[\sup _{t \in[0, T]}\|X_{t}^{\phi^{\varepsilon}}-\bar{X}_{t}^{\phi}\|_{\mathbb{H}_{1}}^{2}\Big]\Big\}^{\frac{1}{2}} \nonumber\\
\leq\!\!\!\!\!\!\!\!&& C_{x,y,T,M} \zeta^{\frac{1}{2}}
\end{eqnarray}
and
\begin{equation}\label{a23}
\widetilde{K}_{22}\leq C_{x,y,T,M} \zeta^{\frac{1}{2}}.
\end{equation}

Next, we consider the convergence of term $\widehat{K}_{24}.$ Since  $\phi^{\varepsilon}$ converges to $\phi$, $\mathbb{P}$-a.s., as $S_M$-valued random elements. Then  the integral $\int_{a}^{b} \phi^\varepsilon_s ds \rightharpoonup \int_{a}^{b} \phi_s ds$ in $\mathscr{U}$, as $\varepsilon\to 0$.
By using the compactness of $\mathfrak{B}_1(\bar{X}_{k \zeta}^{\phi}),$ we
infer that for any fixed $k$,
$$\Big\|\mathfrak{B}_1(\bar{X}_{k \zeta}^{\phi})\Big(\int_{k \zeta}^{(k+1) \zeta} P_1\phi_{s}^{\varepsilon} d s-\int_{k \zeta}^{(k+1) \zeta} P_1\phi_{s} d s\Big)\Big\|_{\mathbb{H}_{1}} \to 0,~~\quad \text { as } \varepsilon \to 0.$$
Based on the dominated convergence theorem, it follows that for any $R>0$,
\begin{equation}\label{a24}
\mathbb{E}(\widehat{K}_{24}) \to 0, \text { as }\varepsilon \to 0.
\end{equation}
Recalling (\ref{a2all})-(\ref{a24}), we have
\begin{equation}\label{K4}
 \limsup _{\varepsilon \to 0} \widetilde{K}_{2}=0,
\end{equation}
where, as stated in Step 1, we choose a suitable $\zeta$ such that $\zeta\to 0$, as $\varepsilon\to 0$.

\vspace{1mm}
\textbf{Step 3}: In this step, we prove the convergence of the term $\widetilde{K}_{3}$ in (\ref{aaall3}). It's easy to see that
\begin{eqnarray}\label{a3all}
\!\!\!\!\!\!\!\!&&\Big|\int_{0}^{t}\langle \mathfrak{f}(X_{s(\zeta)}^{\phi^{\varepsilon}}, \widehat{Y}_{s}^{\varepsilon, \delta})-\bar{\mathfrak{f}}(X_{s(\zeta)}^{\phi^{\varepsilon}}), Z_{s(\zeta)}^{\varepsilon}\rangle_{\mathbb{H}_{1}} d s\Big| \nonumber\\
\leq\!\!\!\!\!\!\!\!&&\sum_{k=0}^{[t / \zeta]-1}\Big|\int_{k \zeta}^{(k+1) \zeta}\langle \mathfrak{f}(X_{s(\zeta)}^{\phi^{\varepsilon}}, \widehat{Y}_{s}^{\varepsilon, \delta})-\bar{\mathfrak{f}}(X_{s(\zeta)}^{\phi^{\varepsilon}}), Z_{s(\zeta)}^{\varepsilon}\rangle_{\mathbb{H}_{1}} d s\Big| \nonumber\\
\!\!\!\!\!\!\!\!&&+\Big|\int_{t(\zeta)}^{t}\langle \mathfrak{f}(X_{s(\zeta)}^{\phi^{\varepsilon}}, \widehat{Y}_{s}^{\varepsilon, \delta})-\bar{\mathfrak{f}}(X_{s(\zeta)}^{\phi^{\varepsilon}}), Z_{s(\zeta)}^{\varepsilon}\rangle_{\mathbb{H}_{1}} d s\Big| \nonumber\\
=:\!\!\!\!\!\!\!\!&& \widetilde{K}_{31}(t)+\widetilde{K}_{32}(t) .
\end{eqnarray}
For the term $\widetilde{K}_{31}(t),$ we can get
\begin{eqnarray}\label{K51all}
\!\!\!\!\!\!\!\!&&\mathbb{E}\Big[\sup _{t \in[0, T \wedge \tau_{R}^{\varepsilon}]} \widetilde{K}_{31}(t)\Big] \nonumber\\
\leq\!\!\!\!\!\!\!\!&& \mathbb{E} \sum_{k=0}^{[T \wedge \tau_{R}^{\varepsilon} / \zeta]-1}\Big|\int_{k \zeta}^{(k+1) \zeta}\langle \mathfrak{f}(X_{k \zeta}^{\phi^{\varepsilon}}, \widehat{Y}_{s}^{\varepsilon, \delta})-\bar{\mathfrak{f}}(X_{k \zeta}^{\phi^{\varepsilon}}), Z_{k \zeta}^{\varepsilon}\rangle_{\mathbb{H}_{1}} d s\Big| \nonumber\\
\leq\!\!\!\!\!\!\!\!&& \frac{C_{T}}{\zeta} \sup _{0 \leq k \leq[T \wedge \tau_{R}^{\varepsilon} / \zeta]-1} \mathbb{E}\Big|\int_{k \zeta}^{(k+1) \zeta}\langle \mathfrak{f}(X_{k \zeta}^{\phi^{\varepsilon}}, \widehat{Y}_{s}^{\varepsilon, \delta})-\bar{\mathfrak{f}}(X_{k \zeta}^{\phi^{\varepsilon}}), Z_{k \zeta}^{\varepsilon}\rangle_{\mathbb{H}_{1}} d s\Big| \nonumber\\
\leq\!\!\!\!\!\!\!\!&& \frac{C_{T} \delta}{\zeta} \sup _{0 \leq k \leq[T \wedge \tau_{R}^{\varepsilon} / \zeta]-1}\Big(\mathbb{E}\|X_{k \zeta}^{\phi^{\varepsilon}}-\bar{X}_{k \zeta}^{\phi}\|_{\mathbb{H}_{1}}^{2}\Big)^{\frac{1}{2}} \nonumber\\
\!\!\!\!\!\!\!\!&&\cdot\Big(\mathbb{E}\Big\|\int_{0}^{\frac{\zeta}{\delta}} \mathfrak{f}(X_{k \zeta}^{\phi^{\varepsilon}}, \widehat{Y}_{s \delta+k \zeta}^{\varepsilon, \delta})-\bar{\mathfrak{f}}(X_{k \zeta}^{\phi^{\varepsilon}}) d s\Big\|_{\mathbb{H}_{1}}^{2}\Big)^{\frac{1}{2}} \nonumber\\
\leq\!\!\!\!\!\!\!\!&& \frac{C_{x,y,T,M} \delta}{\zeta} \sup _{0 \leq k \leq[T \wedge \tau_{R}^{\varepsilon} / \zeta]-1}\Big(\int_{0}^{\frac{\zeta}{\delta}} \int_{r}^{\frac{\zeta}{\delta}} \Phi_{k}(s, r) d s d r\Big)^{\frac{1}{2}},
\end{eqnarray}
where for each  $0 \leq r \leq s \leq \frac{\zeta}{\delta} ,$
$$\Phi_{k}(s, r):=\mathbb{E}\Big[\langle \mathfrak{f}(X_{k \zeta}^{\phi^{\varepsilon}}, \widehat{Y}_{s \delta+k \zeta}^{\varepsilon, \delta})-\bar{\mathfrak{f}}(X_{k \zeta}^{\phi^{\varepsilon}}), \mathfrak{f}(X_{k \zeta}^{\phi^{\varepsilon}}, \widehat{Y}_{r \delta+k \zeta}^{\varepsilon, \delta})-\bar{\mathfrak{f}}(X_{k \zeta}^{\phi^{\varepsilon}})\rangle_{\mathbb{H}_{1}}\Big].$$
Now, we aim to estimate  $\Phi_{k}(s, r)$. Let us consider the following equation
\begin{eqnarray}\label{widetildeY}
\left\{
\begin{aligned}
&d Y_{t}=\frac{1}{\delta} \mathfrak{A}_{2}(X, Y_{t}) dt+\frac{1}{\sqrt{\delta}} \mathfrak{B}_2(X, Y_{t}) d W^2_{t}, t \geq s, \\
&Y_{s}=Y,
\end{aligned}
\right.
\end{eqnarray}
where $X$ is an $\mathscr{F}_s$-measurable
$\mathbb{H}_1$-valued random variable and $Y$ is an $\mathbb{H}_2$-valued random variable. Denote by $\{\widetilde{Y}_{t}^{\delta, s, X, Y}\}_{t \geq 0}$ the unique solution of (\ref{widetildeY}).
We can see see that for any $k \in \mathbb{N}$ and $t \in[k \zeta,(k+1) \zeta]$,
$$\widehat{Y}_{t}^{\varepsilon, \delta}=\widetilde{Y}_{t}^{\delta, k \zeta, X_{k \zeta}^{\phi^{\varepsilon}}, \widehat{Y}_{k \zeta}^{\varepsilon, \delta}}.$$
Since $X_{k \zeta}^{\phi^{\varepsilon}}$ and $\widehat{Y}_{k \zeta}^{\varepsilon, \delta}$ are $\mathscr{F}_{k \zeta}$-measurable and $\{\widetilde{Y}_{s \delta+k \zeta}^{\delta, k \zeta, x, y}\}_{s \geq 0}$ is independent of $\mathscr{F}_{k \zeta}$ for each fixed $x \in \mathbb{H}_{1}$ and $y \in \mathbb{H}_{2},$ we can get
\begin{eqnarray*}
\Phi_{k}(s, r)
\!\!\!\!\!\!\!\!&&=\mathbb{E}\Big[\langle \mathfrak{f}(X_{k \zeta}^{\phi^{\varepsilon}}, \widetilde{Y}_{s \delta+k \zeta}^{\delta, k \zeta, X_{k \zeta}^{\phi^{\varepsilon}}, \widehat{Y}_{k \zeta}^{\varepsilon, \delta}})-\bar{\mathfrak{f}}(X_{k \zeta}^{\phi^{\varepsilon}}),
\mathfrak{f}(X_{k \zeta}^{\phi^{\varepsilon}}, \widetilde{Y}_{r \delta+k \zeta}^{\delta, k \zeta, X_{k \zeta}^{\phi^{\varepsilon}}, \widehat{Y}_{k \zeta}^{\varepsilon, \delta}})-\bar{\mathfrak{f}}(X_{k \zeta}^{\phi^{\varepsilon}})\rangle_{\mathbb{H}_{1}}\Big]  \\
\!\!\!\!\!\!\!\!&&=\mathbb{E}\Big\{\mathbb { E } [\langle \mathfrak{f}(X_{k \zeta}^{\phi^{\varepsilon}}, \widetilde{Y}_{s \delta+k \zeta}^{\delta, k \zeta, X_{k \zeta}^{\phi^{\varepsilon}}, \widehat{Y}_{k \zeta}^{\varepsilon, \delta}})-\bar{\mathfrak{f}}(X_{k \zeta}^{\phi^{\varepsilon}}),
\mathfrak{f}(X_{k \zeta}^{\phi^{\varepsilon}}, \widetilde{Y}_{r \delta+k \zeta}^{\delta, k \zeta, X_{k \zeta}^{\phi^{\varepsilon}}, \widehat{Y}_{k \zeta}^{\varepsilon, \delta}})-\bar{\mathfrak{f}}(X_{k \zeta}^{\phi^{\varepsilon}})\rangle_{\mathbb{H}_{1}} \mid \mathscr{F}_{k \zeta}]\Big\}  \\
\!\!\!\!\!\!\!\!&&=\mathbb{E}\Big\{\mathbb{E}[\langle \mathfrak{f}(x, \tilde{Y}_{s \delta+k \zeta}^{\delta, k \zeta, x, y})-\bar{\mathfrak{f}}(x), \mathfrak{f}(x, \tilde{Y}_{r \delta+k \zeta}^{\delta, k \zeta, x, y})-\bar{\mathfrak{f}}(x)\rangle_{\mathbb{H}_{1}}]|_{(x, y)=(X_{k \zeta}^{\phi^{\varepsilon}}, \widehat{Y}_{k \zeta}^{\varepsilon, \delta})}\Big\}.
\end{eqnarray*}
Since $\widetilde{Y}_{s \delta+k \zeta}^{\delta, k \zeta, x, y}$ is the solution of (\ref{widetildeY}), it follows that
\begin{eqnarray*}
\widetilde{Y}_{s \delta+k \zeta}^{\delta, k \zeta, x, y} \!\!\!\!\!\!\!\!&& =y+\frac{1}{\delta} \int_{k \zeta}^{s \delta+k \zeta} \mathfrak{A}_{2}(x, \widetilde{Y}_{r}^{\delta, k \zeta, x, y}) d r+\frac{1}{\sqrt{\delta}} \int_{k \zeta}^{s \delta+k \zeta} \mathfrak{B}_2(x, \widetilde{Y}_{r }^{\delta, k \zeta, x, y}) d W^2_{r} \\
\!\!\!\!\!\!\!\!&& =y+\frac{1}{\delta} \int_{0}^{s \delta} \mathfrak{A}_{2}(x, \widetilde{Y}_{r+k \zeta}^{\delta, k \zeta, x, y}) d r+\frac{1}{\sqrt{\delta}} \int_{0}^{s \delta} \mathfrak{B}_2(x, \widetilde{Y}_{r \delta+k \zeta}^{\delta, k \zeta, x, y}) d W_{r}^{k \zeta} \\
\!\!\!\!\!\!\!\!&& =y+\int_{0}^{s} \mathfrak{A}_{2}(x, \widetilde{Y}_{r \delta+k \zeta}^{\delta, k \zeta, x, y}) d r+\int_{0}^{s} \mathfrak{B}_2(x, \widetilde{Y}_{r \delta+k \zeta}^{\delta, k \zeta, x, y}) d \bar{W}_{r}^{k \zeta},
\end{eqnarray*}
where $$W_{r}^{k \zeta}:=W^2_{r+k \zeta}-W^2_{k \zeta},~\bar{W}_{r}^{k \zeta}:=\frac{1}{\sqrt{\delta}} W_{r \delta}^{k \zeta}.$$
The uniqueness of solutions to (\ref{widetildeY}) and to the parameterized equation  (\ref{frozenequation}) in Appendix yields that the distribution of random sequence $\{\widetilde{Y}_{s \delta+k \zeta}^{\delta, k \zeta, x, y}\}_{0 \leq s \leq \frac{\zeta}{\delta}}$ coincides with that of  $\{Y_{s}^{x, y}\}_{0 \leq s \leq \frac{\zeta}{\delta}}.$ Then by Markov and homogeneous properties of process $Y^{x, y},$ we have
\begin{eqnarray*}
\!\!\!\!\!\!\!\!&& \Phi_{k}(s, r) \\
= \!\!\!\!\!\!\!\!&& \mathbb{E}\Big\{\mathbb{E}\Big[\langle \mathfrak{f}(x, Y_{s}^{x,y})-\bar{\mathfrak{f}}(x), \mathfrak{f}(x, Y_{r}^{x,y})-\bar{\mathfrak{f}}(x)\rangle_{\mathbb{H}_{1}}|_{(x,y)=(X_{k \zeta}^{\phi^{\varepsilon}}, \widehat{Y}_{k \zeta}^{\varepsilon, \delta})}\Big]\Big\} \\
= \!\!\!\!\!\!\!\!&& \mathbb{E}\Big\{\mathbb{E}\Big[\big\langle\mathbb{E}[\mathfrak{f}(x, Y_{s}^{x,y})-\bar{\mathfrak{f}}(x)|\mathscr{F}_{r}], \mathfrak{f}(x, Y_{r}^{x, y})-\bar{\mathfrak{f}}(x)\big\rangle_{\mathbb{H}_{1}}|_{(x,y)=(X_{k \zeta}^{\phi^{\varepsilon}}, \widehat{Y}_{k \zeta}^{\varepsilon, \delta})}\Big]\Big\} \\
= \!\!\!\!\!\!\!\!&& \mathbb{E}\Big\{\mathbb{E}\Big[\big\langle\mathbb{E}[\mathfrak{f}(x, Y_{s-r}^{x, z})-\bar{\mathfrak{f}}(x)]|_{\{z=Y_{r}^{x, y}\}}, \mathfrak{f}(x, Y_{r}^{x, y})-\bar{\mathfrak{f}}(x)\big\rangle_{\mathbb{H}_{1}}|_{(x,y)=(X_{k \zeta}^{\phi^{\varepsilon}}, \widehat{Y}_{k \zeta}^{\varepsilon, \delta})}\Big]\Big\}. \\
\end{eqnarray*}
In terms of the exponential ergodicity result for the parameterized equation  (see Lemma \ref{expon} in Appendix), the Lipschitz continuity of $\mathfrak{f},~\bar{\mathfrak{f}}$ and estimates (\ref{conslowestimate}), (\ref{auxiliaryY}) and (\ref{frozenestimate}) in Appendix, it follows that
\begin{eqnarray*}
\!\!\!\!\!\!\!\!&& \Phi_{k}(s, r) \\
\leq \!\!\!\!\!\!\!\!&& \mathbb{E}\Big\{\mathbb E\big\{[1+\|x\|_{\mathbb{H}_{1}}+\|Y_{r}^{x, y}\|_{\mathbb{H}_{2}}] e^{-\frac{(s-r) \kappa}{2}}[1+\|x\|_{\mathbb{H}_{1}}+\|Y_{r}^{x, y}\|_{\mathbb{H}_{2}}]\big\}|_{(x,y)=(X_{k \zeta}^{\phi^{\varepsilon}}, \widehat{Y}_{k \zeta}^{\varepsilon, \delta})}\Big\} \\
\leq \!\!\!\!\!\!\!\!&& C_{T} \mathbb{E}\Big[1+\|X_{k \zeta}^{\phi^{\varepsilon}}\|_{\mathbb{H}_{1}}^{2}+\|\widehat{Y}_{k \zeta}^{\varepsilon, \delta}\|_{\mathbb{H}_{2}}^{2}\Big] e^{-\frac{(s-r) \kappa}{2}} \\
\leq \!\!\!\!\!\!\!\!&& C_{x,y,T,M} e^{-\frac{(s-r) \kappa}{2}},
\end{eqnarray*}
which implies that
\begin{eqnarray}\label{K51}
\!\!\!\!\!\!\!\!&& \mathbb{E}\Big[\sup _{t \in[0, T \wedge \tau_{R}^{\varepsilon}]}\widetilde{K}_{31}(t)\Big] \nonumber\\
\leq \!\!\!\!\!\!\!\!&& C_{x,y,T,M} \frac{\delta}{\zeta}\Big(\int_{0}^{\frac{\zeta}{\delta}} \int_{r}^{\frac{\zeta}{\delta}} e^{-\frac{(s-r) \kappa}{2}} d s d r\Big)^{\frac{1}{2}} \nonumber\\
\leq \!\!\!\!\!\!\!\!&&C_{x,y,T,M} \frac{\delta}{\zeta}\Big(\frac{\zeta}{\delta \kappa}-\frac{1}{\kappa^{2}}+\frac{1}{\kappa^{2}} e^{-\frac{\kappa\zeta}{2 \delta}}\Big)^{\frac{1}{2}} .
\end{eqnarray}
By the Lipschitz continuity of $\mathfrak{f}$ and $\bar{\mathfrak{f}}$,  Lemmas \ref{controlestimate}, \ref{auxiliaryestimate} and \ref{skletonproof}, we also obtain
\begin{eqnarray}\label{K52}
\!\!\!\!\!\!\!\!&& \mathbb{E}\Big[\sup _{t \in[0, T \wedge \tau_{R}^{\varepsilon}]}\widetilde{K}_{32}(t)\Big] \nonumber\\
\leq \!\!\!\!\!\!\!\!&& C{\Big\{\mathbb{E}\Big(\sup _{t \in[0, T \wedge \tau_{R}^{\varepsilon}]}\|X_{t}^{\phi^\varepsilon}-\bar{X}_{t}^{\phi}\|_{\mathbb{H}_{1}}^{2}\Big)\Big\}^{\frac{1}{2}} } \nonumber\\
\!\!\!\!\!\!\!\!&& \cdot\Big\{\mathbb{E}\Big(\sup _{t \in[0, T \wedge \tau_{R}^{\varepsilon}]}\Big|\int_{t(\zeta)}^{t}(1+\|X_{s(\zeta)}^{\phi^\varepsilon}\|_{\mathbb{H}_{1}}+\|\widehat{Y}_{s}^{\varepsilon, \delta}\|_{\mathbb{H}_{2}}) d s\Big|^{2}\Big)\Big\}^{\frac{1}{2}} \nonumber\\
\leq \!\!\!\!\!\!\!\!&& C\zeta^{\frac{1}{2}}\Big\{\mathbb{E}\Big(\sup _{t \in[0, T \wedge \tau_{R}^{\varepsilon}]}\|X_{t}^{\phi^\varepsilon}-\bar{X}_{t}^{\phi}\|_{\mathbb{H}_{1}}^{2}\Big)\Big\}^{\frac{1}{2}} \nonumber\\
\!\!\!\!\!\!\!\!&& \cdot\Big\{\mathbb{E}\Big[\int_{0}^{T \wedge \tau_{R}^{\varepsilon}}\Big(1+\|X_{t(\zeta)}^{\phi^\varepsilon}\|_{\mathbb{H}_{1}}^{2}\Big) d t\Big]+\sup _{t \in[0, T]} \mathbb{E}\|\widehat{Y}_{t}^{\varepsilon, \delta}\|_{\mathbb{H}_{2}}^{2}\Big\}^{\frac{1}{2}}  \nonumber\\
\leq \!\!\!\!\!\!\!\!&& C_{x,y,T,M}\zeta^{\frac{1}{2}}.
\end{eqnarray}
Recalling (\ref{a3all}), (\ref{K51}) and (\ref{K52}) leads to
\begin{equation}\label{K5}
\widetilde{K}_{3}
\leq C_{x,y,T,M}\Big(\frac{\delta}{\zeta}+\frac{\delta^{\frac{1}{2}}}{\zeta^{\frac{1}{2}}}+\zeta^{\frac{1}{2}}\Big).
\end{equation}
Taking $\zeta=\delta^{\frac{1}{2}}$ and combining (\ref{K4}) and (\ref{K5}) completes (\ref{aaall3}).
\hspace{\fill}$\Box$

\section{Application to examples}
\setcounter{equation}{0}
 \setcounter{definition}{0}
In this section, we apply our main results to establish the LDP for multi-scale stochastic partial differential equations. The slow component can involve  such as stochastic porous medium equations, stochastic Cahn-Hilliard equations, stochastic 2D Liquid crystal equations and stochastic tamed 3D Navier-Stokes equations, and the fast component includes the stochastic Allen-Cahn equations. It is important to note that the aforementioned models for the slow component are not covered by the framework proposed in \cite{liu2015stochastic} and \cite{HLL}, but they can be addressed using the fully local monotone framework.

Since we  mainly focus on the nonlinear operator $\mathfrak{A}_1$ within slow component, we take the fast component by stochastic reaction-diffusion type equations with a Lipschitz drift for simplicity. We define the operator $\mathcal{R}(v)$  by
\begin{equation*}
\mathcal{R}(v):=\Delta v+c_1v-c_2v^3,
\end{equation*}
where $c_1,~c_2$ are some non-negative constants.

Let $\Lambda \subset \mathbb{R}^{d}$ be an open bounded domain with a smooth
boundary and so is $\mathcal{O} \subset \mathbb{R}$. For $p \in[1, \infty),$ we use $L^{p}(\Lambda,\mathbb{R}^{d})$ (for simplicity, write it down as $L^{p}(\Lambda)$) to denote the vector valued $L^p$-space with the norm $\|\cdot\|_{L^p}.$ We also use $H^m_0(\Lambda;\mathbb{R}^{n})$ (resp.~$H^m_0(\mathcal{O})$) to denote the classical Sobolev space with Dirichlet boundary taking values in $\mathbb{R}^{n}$ (resp.~$\mathbb{R}$) for a non-negative integer $m \geq 0$, which is equipped with the following norm
\begin{equation*}
\|u\|_{m}:=\Big(\int_{\Lambda}|(I-\Delta)^{\frac{m}{2}} u|^{2} d \xi\Big)^{\frac{1}{2}}.
\end{equation*}


\begin{example}
(Porous media equation with fast oscillations)
\end{example}

Let $d\geq1$. Let $\Psi: \mathbb{R} \to \mathbb{R}$ be a function having the following properties:
\begin{enumerate}
\item [$({\mathbf{\Psi}}{\mathbf{1}})$]
$\Psi$ is continuous.
\item [$({\mathbf{\Psi}}{\mathbf{2}})$]
For all $s,t \in \mathbb{R}$
\begin{equation*}
(t-s)(\Psi(t)-\Psi(s)) \geqslant 0.
\end{equation*}
\item [$({\mathbf{\Psi}}{\mathbf{3}})$]
There exist $m \in[2, \infty), c_{1} \in(0, \infty), c_{2} \in[0, \infty)$  such that for all $s \in \mathbb{R}$
\begin{equation*}
s \Psi(s) \geqslant c_{1}|s|^{m}-c_{2}.
\end{equation*}
\item [$({\mathbf{\Psi}}{\mathbf{4}})$]
There exist $c_{3}, c_{4} \in(0, \infty)$  such that for all $s \in \mathbb{R}$
\begin{equation*}
|\Psi(s)| \leqslant c_{3}|s|^{m-1}+c_{4},
\end{equation*}
where $m$ is as in $({\mathbf{\Psi}}{\mathbf{3}}).$
\end{enumerate}

Using the following Gelfand triple for the slow component
$$
\mathbb{V}_{1}:=L^{p}(\Lambda;\mathbb{R}) \subseteq \mathbb{H}_{1}:=(H_{0}^{1}(\Lambda;\mathbb{R}))^{*} \subseteq \mathbb{V}_{1}^{*},
$$
and the following Gelfand triple for the fast component
$$
\mathbb{V}_{2}:=H_{0}^{1}(\mathcal{O}) \subseteq \mathbb{H}_{2}:=L^{2}(\mathcal{O}) \subseteq \mathbb{V}_{2}^{*}.
$$

Now, we consider the stochastic porous media equation with fast oscillations,
\begin{eqnarray}\label{51}
\left\{ \begin{aligned}
&d X_{t}^{\varepsilon,\delta}=[\Delta \Psi(X_{t}^{\varepsilon,\delta})+\mathfrak{f}(X_{t}^{\varepsilon,\delta}, Y_{t}^{\varepsilon,\delta})] d t+\sqrt{\varepsilon}\mathfrak{B}_1(X_{t}^{\varepsilon,\delta}) d W_{t}^{1}, \\
&d Y_{t}^{\varepsilon,\delta}=\frac{1}{\delta}[\mathcal{R}(Y_{t}^{\varepsilon,\delta})+g(X_{t}^{\varepsilon,\delta}, Y_{t}^{\varepsilon,\delta})] d t+\frac{1}{\sqrt{\delta}} \mathfrak{B}_2(X_{t}^{\varepsilon,\delta}, Y_{t}^{\varepsilon,\delta}) d W_{t}^{2}, \\
&X_{0}^{\varepsilon,\delta}=x \in \mathbb{H}_{1}, Y_{0}^{\varepsilon,\delta}=y \in \mathbb{H}_{2},
\end{aligned} \right.
\end{eqnarray}
where the measurable maps
\begin{equation*}
\mathfrak{f}: \mathbb{H}_{1} \times \mathbb{H}_{2} \to \mathbb{H}_{1},  \mathfrak{B}_1: \mathbb{V}_{1} \to L_{2}(\mathbb{U}_{1}, \mathbb{H}_{1}),
\end{equation*}
\begin{equation*}
g: \mathbb{H}_{1} \times \mathbb{V}_{2} \to \mathbb{V}_{2}^{*}, \mathfrak{B}_2: \mathbb{H}_{1} \times \mathbb{V}_{2} \to L_{2}(\mathbb{U}_{2}, \mathbb{H}_{2})
\end{equation*}
are Lipschitz continuous, i.e., for all $u_1,u_2\in \mathbb{H}_1,~v_1,v_2\in \mathbb{H}_2,~u,v \in \mathbb{V}_1,~w_1,w_2 \in \mathbb{V}_2,$
\begin{eqnarray}\label{511}
\!\!\!\!\!\!\!\!&&\|\mathfrak{f}(u_{1}, v_{1})-\mathfrak{f}(u_{2}, v_{2})\|_{\mathbb{H}_{1}} \leq C(\|u_{1}-u_{2}\|_{\mathbb{H}_{1}}+\|v_{1}-v_{2}\|_{\mathbb{H}_{2}}),\\
\!\!\!\!\!\!\!\!&&\|\mathfrak{B}_1(u)-\mathfrak{B}_1(v)\|_{L_{2}(\mathbb{U}_{1}, \mathbb{H}_{1})}^{2} \leq C\|u-v\|_{\mathbb{H}_{1}}^{2},\\
\label{512}
\!\!\!\!\!\!\!\!&&\|g(u_1,w_1)-g(u_2,w_2)\|_{\mathbb{H}_{2}} \leq C\|u_1-u_2\|_{\mathbb{H}_{1}}+L_{g}\|w_1-w_2\|_{\mathbb{H}_{2}},\\
\label{513}
\!\!\!\!\!\!\!\!&&\|\mathfrak{B}_2(u_1,w_1)-\mathfrak{B}_2(u_2,w_2)\|_{L_2(\mathbb{U}_2,\mathbb{H}_2)}\leq C\|u_1-u_2\|_{\mathbb{H}_1}+L_{\mathfrak{B}_2}\|w_1-w_2\|_{\mathbb{H}_2}.\label{514}
\end{eqnarray}
Moreover, we have
\begin{equation}\label{515}
\sup_{w\in \mathbb{V}_2}\|\mathfrak{B}_2(u_1,w)\|_{L_2(\mathbb{U}_2,\mathbb{H}_2)}\leq C(1+\|u_1\|_{\mathbb{H}_1}),
\end{equation}
and for the smallest eigenvalue $\lambda_{1}$ of $-\Delta$ in $\mathbb{H}_2$, the Lipschitz constants $L_{g},~L_{\mathfrak{B}_2}$ satisfy
\begin{equation}\label{516}
2 \lambda_{1}-2 L_{g}-L_{\mathfrak{B}_{2}}^{2}>0.
\end{equation}
\begin{theorem}\label{t51}
Assume that $\Psi$ satisfies $({\mathbf{\Psi}}{\mathbf{1}})$-$({\mathbf{\Psi}}{\mathbf{4}})$ and (\ref{511})-(\ref{516}) hold. If
\begin{equation*}
\lim _{\varepsilon \to 0} \frac{\delta}{\varepsilon}=0,
\end{equation*}
then  $\{X^{\varepsilon,\delta}:\varepsilon>0\}$ in (\ref{51}) satisfies the LDP on $C([0,T];\mathbb{H}_1)$ with the following good rate function
\begin{equation*}
I(f)=\inf _{\{\phi \in L^{2}([0, T] ; \mathscr{U_0}): f=\mathcal{G}^{0}(\int_{0}^{\cdot} \phi_{s} d s)\}}\left \{\frac{1}{2} \int_{0}^{T}\|\phi_{s}\|_{\mathscr{U_0}}^{2} d s\right \},
\end{equation*}
where the measurable map $\mathcal{G}^0:C([0,T];\mathscr{U}_0)\to C([0,T];\mathbb{H}_1)$ is given by
\begin{eqnarray*}
\mathcal{G}^0\Big(\int_0^\cdot\phi_s ds\Big):=\left\{ \begin{aligned}&\bar{X}^\phi,~~\phi\in L^2([0,T]; \mathscr{U});\\
&0,~~~~~~\text{otherwise}.
\end{aligned} \right.
\end{eqnarray*}
Here $\bar{X}^\phi$ is the  solution of the
skeleton equation
\begin{equation*}
\frac{d \bar{X}_{t}^{\phi}}{d t}=\Delta \Psi(\bar{X}_{t}^{\phi})+\bar{\mathfrak{f}}(\bar{X}_{t}^{\phi})+\mathfrak{B}_{1}(\bar{X}_{t}^{\phi})P_1\phi,~\bar{X}_{0}^{\phi}=x.
\end{equation*}

\end{theorem}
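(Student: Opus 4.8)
The plan is to verify that the coefficients of (\ref{51}) fit the abstract framework of Hypotheses \ref{hypo1}--\ref{hypo2} and then invoke Theorem \ref{t1}; once this is done, the averaged drift $\bar{\mathfrak{f}}(x)=\int_{\mathbb{H}_2}\mathfrak{f}(x,z)\mu^x(dz)$, the skeleton equation and the form of the rate function all follow automatically from the abstract result. Throughout we identify $p=m$ in the Gelfand triple of the slow component.

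\textbf{Slow component.} Put $\mathfrak{A}_1(u):=\Delta\Psi(u)$ on $\mathbb{V}_1=L^m(\Lambda)\subset\mathbb{H}_1=(H^1_0(\Lambda))^*\subset\mathbb{V}_1^*$, understood through the pairing ${}_{\mathbb{V}_1^*}\langle\Delta\Psi(u),v\rangle_{\mathbb{V}_1}=-\langle\Psi(u),v\rangle_{L^2}$ adapted to the $H^{-1}$-structure of $\mathbb{H}_1$ (well defined by $({\mathbf{\Psi}}{\mathbf{4}})$ and H\"older's inequality, since $\Psi(u)\in L^{m/(m-1)}$), with $\alpha_1=m$ and $\beta_1=0$. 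Since $H^1_0(\Lambda)\hookrightarrow L^{m/(m-1)}(\Lambda)$ compactly (true in every dimension because $m\ge2$), the dual embedding $\mathbb{V}_1\subset\mathbb{H}_1$ is compact. Condition $({\mathbf{A}}{\mathbf{1}})$ follows from the continuity $({\mathbf{\Psi}}{\mathbf{1}})$; condition $({\mathbf{A}}{\mathbf{2}})$ holds with $\rho\equiv\eta\equiv0$ because ${}_{\mathbb{V}_1^*}\langle\Delta\Psi(u)-\Delta\Psi(v),u-v\rangle_{\mathbb{V}_1}=-\langle\Psi(u)-\Psi(v),u-v\rangle_{L^2}\le0$ by $({\mathbf{\Psi}}{\mathbf{2}})$, so the slow equation is in fact monotone, a special case of (\ref{conful}); condition $({\mathbf{A}}{\mathbf{3}})$ follows from $({\mathbf{\Psi}}{\mathbf{3}})$, which gives ${}_{\mathbb{V}_1^*}\langle\Delta\Psi(u),u\rangle_{\mathbb{V}_1}\le-c_1\|u\|_{L^m}^m+c_2|\Lambda|$, together with $\|\mathfrak{B}_1(u)\|_{L_2(\mathbb{U}_1,\mathbb{H}_1)}^2\le C(1+\|u\|_{\mathbb{H}_1}^2)$, a consequence of the Lipschitz continuity of $\mathfrak{B}_1$ assumed in (\ref{511})--(\ref{516}); condition $({\mathbf{A}}{\mathbf{4}})$ follows from $({\mathbf{\Psi}}{\mathbf{4}})$, which yields $\|\Delta\Psi(u)\|_{\mathbb{V}_1^*}=\|\Psi(u)\|_{L^{m/(m-1)}}\le c_3\|u\|_{L^m}^{m-1}+c_4|\Lambda|^{(m-1)/m}$ and hence $\|\Delta\Psi(u)\|_{\mathbb{V}_1^*}^{m/(m-1)}\le C(1+\|u\|_{\mathbb{V}_1}^m)$; finally $({\mathbf{A}}{\mathbf{5}})$ is (\ref{511}) together with the Lipschitz continuity of $\mathfrak{B}_1$. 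These verifications for $\Delta\Psi$ are, up to notation, those carried out in \cite{liu2015stochastic}.

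\textbf{Fast component.} Put $\mathfrak{A}_2(u,v):=\mathcal{R}(v)+g(u,v)=\Delta v+c_1v-c_2v^3+g(u,v)$ on $\mathbb{V}_2=H^1_0(\mathcal{O})\subset\mathbb{H}_2=L^2(\mathcal{O})\subset\mathbb{V}_2^*$, with $\alpha_2=2$; the embedding $\mathbb{V}_2\subset\mathbb{H}_2$ is compact by Rellich--Kondrachov. Condition $({\mathbf{H}}{\mathbf{1}})$ is routine. For $({\mathbf{H}}{\mathbf{2}})$ one combines ${}_{\mathbb{V}_2^*}\langle\Delta w,w\rangle_{\mathbb{V}_2}=-\|\nabla w\|_{L^2}^2\le-\lambda_1\|w\|_{L^2}^2$ (Poincar\'e inequality), $-c_2\langle v_1^3-v_2^3,v_1-v_2\rangle_{\mathbb{H}_2}\le0$ (monotonicity of $s\mapsto s^3$) and the Lipschitz bounds of $g$ and $\mathfrak{B}_2$ in the second variable; collecting terms and using the structural condition (\ref{516}) gives $({\mathbf{H}}{\mathbf{2}})$ with some $\kappa>0$, while the second inequality in $({\mathbf{H}}{\mathbf{2}})$ is (\ref{514}). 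Condition $({\mathbf{H}}{\mathbf{3}})$ follows by the same computation, dropping the favourable term $-c_2\|v\|_{L^4}^4$, using the Lipschitz bound (\ref{512}) on $g$ and $\|v\|_{\mathbb{V}_2}^2\simeq\|\nabla v\|_{L^2}^2$. For $({\mathbf{H}}{\mathbf{4}})$ one uses that, since $\mathcal{O}\subset\mathbb{R}$, $H^1_0(\mathcal{O})\hookrightarrow L^\infty(\mathcal{O})$ with $\|v\|_{L^\infty}\le C\|v\|_{L^2}^{1/2}\|\nabla v\|_{L^2}^{1/2}$, whence $\|v^3\|_{\mathbb{V}_2^*}\le C\|v^3\|_{L^2}\le C\|v\|_{\mathbb{H}_2}^{2}\|v\|_{\mathbb{V}_2}$, so that $\|\mathfrak{A}_2(u,v)\|_{\mathbb{V}_2^*}^{2}\le C(1+\|v\|_{\mathbb{V}_2}^{2})(1+\|v\|_{\mathbb{H}_2}^{4})+C\|u\|_{\mathbb{H}_1}^{2}$, namely $({\mathbf{H}}{\mathbf{4}})$ with $\beta_2=4$; the bound on $\mathfrak{B}_2$ in $({\mathbf{H}}{\mathbf{4}})$ is (\ref{515}).

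With Hypotheses \ref{hypo1}--\ref{hypo2} verified, Theorem \ref{t1} applies verbatim and yields the LDP for $\{X^{\varepsilon,\delta}:\varepsilon>0\}$ on $C([0,T];\mathbb{H}_1)$ with the stated good rate function, the averaged nonlinearity $\bar{\mathfrak{f}}$ and the skeleton equation as displayed. I would single out as the main obstacle the correct choice of the Gelfand triples and of the polynomial exponents $(\alpha_1,\beta_1)$ and $(\alpha_2,\beta_2)$ so that the coercivity and growth conditions $({\mathbf{A}}{\mathbf{3}})$--$({\mathbf{A}}{\mathbf{4}})$ and $({\mathbf{H}}{\mathbf{3}})$--$({\mathbf{H}}{\mathbf{4}})$ hold --- in particular the cubic term in the fast equation, which is exactly what forces the one-dimensional domain $\mathcal{O}\subset\mathbb{R}$ --- together with the structural inequality (\ref{516}), which is precisely what secures the strict dissipativity $\kappa>0$ in $({\mathbf{H}}{\mathbf{2}})$ and hence the exponential ergodicity of the fast process underlying the definition of $\bar{\mathfrak{f}}$.
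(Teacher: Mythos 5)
Your proposal is correct and takes the same route as the paper: verify that the porous medium operator and the reaction--diffusion fast component satisfy Hypotheses \ref{hypo1}--\ref{hypo2} in the Gelfand triples $L^m(\Lambda)\subset (H_0^1(\Lambda))^*\subset (L^m(\Lambda))^*$ and $H_0^1(\mathcal{O})\subset L^2(\mathcal{O})\subset (H_0^1(\mathcal{O}))^*$, then invoke Theorem \ref{t1}. The paper simply cites \cite[Example 4.1.11]{liu2015stochastic} for the slow-component verification, whereas you carry out the checks of $({\mathbf{A}}{\mathbf{1}})$--$({\mathbf{A}}{\mathbf{4}})$ and $({\mathbf{H}}{\mathbf{1}})$--$({\mathbf{H}}{\mathbf{4}})$ explicitly (with the correct exponents $\alpha_1=m$, $\beta_1=0$, $\alpha_2=2$, $\beta_2=4$), which matches the content of that reference.
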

\begin{proof}
The porous medium operator $\Psi$ can be solved by the classical monotonicity framework, we refer the reader to \cite[Example 4.1.11]{liu2015stochastic}.
\end{proof}

\begin{remark}
A typical example of $\Psi$ is given by
$$\Psi(s):=|s|^{r-1}s,~s\in\mathbb{R}.$$
In the work \cite[Theorem 3.1]{HLL}, the authors have studied the LDP for the stochastic porous media equation with fast oscillations. However, ustilizing our current framework, we are able to  extend the result in  \cite[Theorem 3.1]{HLL} to a more general case, where the fast component can be driven by multiplicative noise.
\end{remark}

\begin{example}
(Cahn-Hilliard equation with fast oscillations)
\end{example}
The stochastic Cahn-Hilliard equation is a well-known model used to characterize phase separation in a
binary alloys and other media. For a comprehensive overview of this model, we refer to the survey \cite{NA}  (see
also \cite{DD,EM} for the stochastic case).

Let $d=1,2,3$. Denote
$$
\mathbb{H}_1:=L^{2}(\Lambda;\mathbb{R}^d),~~\mathbb{V}_1:=\big\{u \in H^{2}(\Lambda;\mathbb{R}^d): \nabla u \cdot \nu=\nabla(\Delta u) \cdot \nu=0 \text { on } \partial \Lambda\big\}.
$$

Choosing the Gelfand triple for the
slow component
$$
\mathbb{V}_{1} \subseteq \mathbb{H}_{1} \subseteq \mathbb{V}_{1}^{*},
$$
and the following Gelfand triple for the fast component
$$
\mathbb{V}_{2}:=H_{0}^{1}(\mathcal{O}) \subseteq \mathbb{H}_{2}:=L^{2}(\mathcal{O}) \subseteq \mathbb{V}_{2}^{*}.
$$
The operator $\mathfrak{A}_{1}(\cdot): \mathbb{V}_{1} \to \mathbb{V}_{1}^{*}$ is defined by
\begin{equation*}
\mathfrak{A}_1(u):=-\Delta^{2} u+\Delta \varphi(u),
\end{equation*}
where $\varphi \in C^{1}(\mathbb{R})$ and there exist $C>0,~ p \in [2, \frac{d+4}{d}]$ such that for all$x, y \in \mathbb{R}$,
\begin{eqnarray}\label{531}
\!\!\!\!\!\!\!\!&&\varphi^{\prime}(x) \geq-C,\\
\!\!\!\!\!\!\!\!&&|\varphi(x)| \leq C(1+|x|^{p}),\\\label{532}
\!\!\!\!\!\!\!\!&&|\varphi(x)-\varphi(y)| \leq C(1+|x|^{p-1}+|y|^{p-1})|x-y|.\label{533}
\end{eqnarray}

Now, we consider the stochastic Cahn-Hilliard equation with fast oscillations,
\begin{eqnarray}\label{53}
\left\{ \begin{aligned}
&d X_{t}^{\varepsilon,\delta}=[-\Delta^{2} X_{t}^{\varepsilon,\delta}+\Delta \varphi(X_{t}^{\varepsilon,\delta})+\mathfrak{f}(X_{t}^{\varepsilon,\delta}, Y_{t}^{\varepsilon,\delta})] d t+\sqrt{\varepsilon}\mathfrak{B}_{1}(X_{t}^{\varepsilon,\delta}) d W_{t}^{1}, \\
&d Y_{t}^{\varepsilon,\delta}=\frac{1}{\delta}[\mathcal{R}(Y_{t}^{\varepsilon,\delta})+g(X_{t}^{\varepsilon,\delta}, Y_{t}^{\varepsilon,\delta})] d t+\frac{1}{\sqrt{\delta}} \mathfrak{B}_{2}(X_{t}^{\varepsilon,\delta}, Y_{t}^{\varepsilon,\delta}) d W_{t}^{2}, \\
&X_{0}^{\varepsilon,\delta}=x \in \mathbb{H}_{1}, Y_{0}^{\varepsilon,\delta}=y \in \mathbb{H}_{2},
\end{aligned} \right.
\end{eqnarray}
where the measurable maps  $\mathfrak{f},$ $\mathfrak{B}_1,$ $g$ and $\mathfrak{B}_2$ satisfy conditions (\ref{511})-(\ref{516}).

\begin{theorem}\label{t53}
Assume that $\varphi$ satisfies (\ref{531})-(\ref{533}) and (\ref{511})-(\ref{516}) hold. If
\begin{equation*}
\lim _{\varepsilon \to 0} \frac{\delta}{\varepsilon}=0,
\end{equation*}
then  $\{X^{\varepsilon,\delta}:\varepsilon>0\}$ in (\ref{53}) satisfies the LDP on $C([0,T];\mathbb{H}_1)$ with the following good rate function
\begin{equation*}
I(f)=\inf _{\{\phi \in L^{2}([0, T] ; \mathscr{U_0}): f=\mathcal{G}^{0}(\int_{0}^{\cdot} \phi_{s} d s)\}}\left \{\frac{1}{2} \int_{0}^{T}\|\phi_{s}\|_{\mathscr{U_0}}^{2} d s\right \},
\end{equation*}
where the measurable map $\mathcal{G}^0:C([0,T];\mathscr{U}_0)\to C([0,T];\mathbb{H}_1)$ is given by
\begin{eqnarray*}
\mathcal{G}^0\Big(\int_0^\cdot\phi_s ds\Big):=\left\{ \begin{aligned}&\bar{X}^\phi,~~\phi\in L^2([0,T]; \mathscr{U});\\
&0,~~~~~~\text{otherwise}.
\end{aligned} \right.
\end{eqnarray*}
Here $\bar{X}^\phi$ is the  solution of the
skeleton equation
\begin{equation*}
\frac{d \bar{X}_{t}^{\phi}}{d t}=-\Delta^{2}\bar{X}_{t}^{\phi}+\Delta \varphi(\bar{X}_{t}^{\phi})+\bar{\mathfrak{f}}(\bar{X}_{t}^{\phi})+\mathfrak{B}_{1}(\bar{X}_{t}^{\phi})P_1\phi,~\bar{X}_{0}^{\phi}=x.
\end{equation*}
\end{theorem}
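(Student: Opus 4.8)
The plan is to show that the stochastic Cahn--Hilliard system (\ref{53}) is a particular instance of the abstract framework (\ref{aimequation}) satisfying Hypotheses \ref{hypo1}--\ref{hypo2}, and then to invoke Theorem \ref{t1}. Concretely, I would take the slow operator $\mathfrak{A}_1(u):=-\Delta^2 u+\Delta\varphi(u)$ on the Gelfand triple $\mathbb{V}_1\subseteq\mathbb{H}_1=L^2(\Lambda;\mathbb{R}^d)\subseteq\mathbb{V}_1^*$, the fast operator $\mathfrak{A}_2(u,v):=\mathcal{R}(v)+g(u,v)=\Delta v+c_1v-c_2v^3+g(u,v)$ on $\mathbb{V}_2=H_0^1(\mathcal{O})\subseteq\mathbb{H}_2=L^2(\mathcal{O})\subseteq\mathbb{V}_2^*$, and $\mathfrak{B}_1,\mathfrak{B}_2,\mathfrak{f}$ as in (\ref{53}), working with $\alpha_1=\alpha_2=2$. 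Once Hypotheses \ref{hypo1}--\ref{hypo2} are checked, Lemma \ref{Lem2.1} gives well-posedness of (\ref{53}) and Theorem \ref{t1} gives the LDP on $C([0,T];\mathbb{H}_1)$ with the rate function (\ref{ratef}), the map $\mathcal{G}^0$ being the one attached to the skeleton equation (\ref{skletonequation}), which here is the equation displayed in the statement; so the whole task reduces to verifying the structural conditions.

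First I would dispose of the routine conditions. The embeddings $\mathbb{V}_1\subset H^2(\Lambda;\mathbb{R}^d)$ and $\mathbb{V}_2=H_0^1(\mathcal{O})$ into $\mathbb{H}_1$, $\mathbb{H}_2$ are compact by Rellich--Kondrachov since $\Lambda$, $\mathcal{O}$ are bounded. Hemicontinuity $(\mathbf{A1})$, $(\mathbf{H1})$ follows from the continuity of $\varphi$ (resp. of the cubic nonlinearity together with the Lipschitz map $g$) and dominated convergence, using the polynomial growth of $\varphi$. Condition $(\mathbf{A5})$ is the Lipschitz continuity of $\mathfrak{f}$ and $\mathfrak{B}_1$ assumed in (\ref{511})--(\ref{516}), the second bound in $(\mathbf{H2})$ is the $\mathfrak{B}_2$-Lipschitz bound (\ref{514}), and the $\mathfrak{B}_2$-bound in $(\mathbf{H4})$ is (\ref{515}). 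For coercivity, integrating by parts with the boundary conditions defining $\mathbb{V}_1$ gives
\[
{}_{\mathbb{V}_1^*}\langle\mathfrak{A}_1(u),u\rangle_{\mathbb{V}_1}=-\|\Delta u\|_{L^2}^2-\int_\Lambda\varphi'(u)|\nabla u|^2\,d\xi\le-\|\Delta u\|_{L^2}^2+C\|\nabla u\|_{L^2}^2
\]
by (\ref{531}); absorbing $\|\nabla u\|_{L^2}^2$ into $\frac{1}{2}\|\Delta u\|_{L^2}^2+C\|u\|_{L^2}^2$ and using that $\|\Delta u\|_{L^2}^2$ dominates $\|u\|_{\mathbb{V}_1}^2$ modulo lower order, together with the (linearly growing) $\mathfrak{B}_1$, yields $(\mathbf{A3})$ with $\alpha_1=2$; likewise $2{}_{\mathbb{V}_2^*}\langle\mathfrak{A}_2(u,v),v\rangle_{\mathbb{V}_2}\le-2\|\nabla v\|_{L^2}^2-2c_2\|v\|_{L^4}^4+C\|v\|_{\mathbb{H}_2}^2+C(1+\|u\|_{\mathbb{H}_1}^2)$ gives $(\mathbf{H3})$ with $\alpha_2=2$ after discarding the non-positive quartic term and using (\ref{512}). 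The growth condition $(\mathbf{A4})$ comes from $\|\mathfrak{A}_1(u)\|_{\mathbb{V}_1^*}\le C\|u\|_{\mathbb{V}_1}+C\|\varphi(u)\|_{L^2}$ together with the polynomial growth of $\varphi$ and Gagliardo--Nirenberg interpolation, and $(\mathbf{H4})$ from the one-dimensional embedding $H_0^1(\mathcal{O})\hookrightarrow L^\infty(\mathcal{O})$, which gives $\|v^3\|_{\mathbb{V}_2^*}\le C\|v^3\|_{L^1}=C\|v\|_{L^3}^3\le C\|v\|_{\mathbb{V}_2}\|v\|_{\mathbb{H}_2}^2$; both produce finite powers $\beta_1,\beta_2$.

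The two substantive points are the monotonicity conditions. For $(\mathbf{H2})$ I would write $w=v_1-v_2$, note that the cubic term contributes $-2c_2\int_\mathcal{O}w^2(v_1^2+v_1v_2+v_2^2)\,d\xi\le0$, the Laplacian contributes $-2\|\nabla w\|_{L^2}^2\le-2\lambda_1\|w\|_{L^2}^2$ by Poincaré, and (\ref{512}), (\ref{514}) bound the $g$- and $\mathfrak{B}_2$-contributions by $(2L_g+L_{\mathfrak{B}_2}^2)\|w\|_{L^2}^2$, so that the spectral-gap assumption (\ref{516}) delivers a genuine dissipativity constant $\kappa>0$. For $(\mathbf{A2})$, setting $w=u-v$ and integrating by parts,
\[
{}_{\mathbb{V}_1^*}\langle\mathfrak{A}_1(u)-\mathfrak{A}_1(v),w\rangle_{\mathbb{V}_1}=-\|\Delta w\|_{L^2}^2-\int_\Lambda\nabla\bigl(\varphi(u)-\varphi(v)\bigr)\cdot\nabla w\,d\xi ,
\]
and after shifting $\varphi$ by a linear term so that its derivative becomes non-negative, the remaining indefinite cross term is to be estimated, via H\"older's inequality, the local Lipschitz bound (\ref{533}) and Gagliardo--Nirenberg interpolation among $H^2$, $H^1$ and $L^2$, by $\frac{1}{2}\|\Delta w\|_{L^2}^2+\bigl(C+\rho(u)+\eta(v)\bigr)\|w\|_{L^2}^2$ with $\rho,\eta$ obeying (\ref{conloc}) with $\alpha_1=2$.

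I expect this last estimate — the $(\mathbf{A2})$ bound (and, to a lesser degree, $(\mathbf{A4})$) — to be the main obstacle: the biharmonic dissipation $-\|\Delta w\|_{L^2}^2$ must absorb the lower-order indefinite nonlinearity at the level of an $\mathbb{H}_1$-estimate, and this is exactly the point at which the dimension restriction $d\le3$ and the growth-exponent restriction $p\le\frac{d+4}{d}$ are needed, these being precisely the conditions under which the relevant Gagliardo--Nirenberg exponents close. Since this computation is classical for the Cahn--Hilliard operator in the fully local monotone setting, I would simply import it from \cite{RSZ} (see also \cite{liu2015stochastic}); granting it, all of Hypotheses \ref{hypo1}--\ref{hypo2} hold and Theorem \ref{t1} yields the assertion, with the rate function and the map $\mathcal{G}^0$ specialized as in the statement.
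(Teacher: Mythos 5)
Your proposal is correct and follows essentially the same route as the paper: reduce (\ref{53}) to the abstract framework by verifying Hypotheses \ref{hypo1}--\ref{hypo2} for $\mathfrak{A}_1(u)=-\Delta^2u+\Delta\varphi(u)$ on the stated Gelfand triples (the conditions on $\mathfrak{f},\mathfrak{B}_1,g,\mathfrak{B}_2$ being exactly (\ref{511})--(\ref{516})), with the key local monotonicity and growth estimates obtained from (\ref{531})--(\ref{533}) via Gagliardo--Nirenberg interpolation as in \cite[Example 5.2.27]{liu2015stochastic} and \cite{RSZ}, and then invoke Theorem \ref{t1}. The paper's proof is just this reduction, with the explicit choice $\rho(u)=\|u\|_{\mathbb{V}_1}^{d(p-1)/2}\|u\|_{\mathbb{H}_1}^{(4-d)(p-1)/2}$, $\eta(v)=\|v\|_{\mathbb{V}_1}^{d(p-1)/2}\|v\|_{\mathbb{H}_1}^{(4-d)(p-1)/2}$ in $(\mathbf{A2})$, matching your identification of where the restrictions $d\le3$ and $p\le\frac{d+4}{d}$ enter.
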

\begin{proof}
The conditions of $\varphi$ and the Gagliardo-Nirenberg inequality implies that $\mathfrak{A}_1$ satisfies $({\mathbf{A}}{\mathbf{1}})$-$({\mathbf{A}}{\mathbf{4}})$ (see \cite[Example 5.2.27]{liu2015stochastic} for some details). For example, the condition $({\mathbf{A}}{\mathbf{2}})$ reads as
\begin{equation*}
_{\mathbb{V}_{1}^{*}}\left\langle \mathfrak{A}_{1}(u)-\mathfrak{A}_{1}(v), u-v\right\rangle_{\mathbb{V}_{1}} \leq -\frac{1}{2}\|u-v\|_{\mathbb{V}_1}^{2}+C\big(1+\rho(u)+\eta(v)\big)\|u-v\|_{\mathbb{H}_1}^{2},
\end{equation*}
where $$\rho(u):=\|u\|_{\mathbb{V}_1}^{\frac{d(p-1)}{2}}\|u\|_{\mathbb{H}_1}^{\frac{(4-d)(p-1)}{2}},~~\eta(v):=\|v\|_{\mathbb{V}_1}^{\frac{d(p-1)}{2}}\|v\|_{\mathbb{H}_1}^{\frac{(4-d)(p-1)}{2}}.$$ Therefore, Theorem \ref{t53} is a direct consequence of Theorem \ref{t1}.
\end{proof}

\begin{remark}
To the best of our knowledge, this is the first paper to deal with the LDP for stochastic Cahn-Hilliard equations with fast oscillations. As a classical example, the function $\varphi$ can be chosen as  $\varphi(x):=x^3-x$, which represents
the derivative of the double well potential $\Phi(x):=\frac{1}{4}(x^2-1)^2$.
\end{remark}

\begin{example}
(Liquid crystal equation with fast oscillations)
\end{example}

The 2D Liquid crystal equation is a simplified version that captures the essential mathematical properties of the original Ericksen-Leslie equations with the Ginzburg-Landau approximation, we refer the reader to \cite{LL} for more details.

We assume that $\Phi: \mathbb{R}^{3} \to \mathbb{R}^{3}$ satisfies the following condition: there exists a
$k$-th polynomial $\psi:[0, \infty) \to \mathbb{R}$, for some $k \in \mathbb{N}$, such that
\begin{equation*}
\Phi(x):=\psi(|x|^{2}) x=(\sum_{i=0}^{k} a_{i}|x|^{2 i}) x,
\end{equation*}
where $a_i \in \mathbb{R},~i=1,2,\dots,k-1$ and $a_k> 0.$
Denote
$$
\mathbb{H}_1:=H_1 \times[H^{1}(\Lambda;\mathbb{R}^2)], \quad \mathbb{V}_1:=V_1 \times\big\{w_2 \in H^{2}(\Lambda;\mathbb{R}^2): \frac{\partial w_2}{\partial \nu}=0\big\}.
$$
where $V_1:=\big\{w_1 \in H^{1}(\Lambda;\mathbb{R}^2): \nabla \cdot w_1=0,w_1|_{\partial \Lambda}=0\big\}$ and $H_1$ is the closure of $V_1$ w.r.t.~$L^2$-norm. The norms in $\mathbb{H}_1$ and in $\mathbb{V}_1$ are denoted respectively by
$$
\|u\|_{\mathbb{H}_1}^{2}:=\|w_1\|_{H_1}^{2}+\|w_2\|_{1}^{2}, \quad\|u\|_{\mathbb{V}_1}^{2}:=\|w_1\|_{V}^{2}+\|w_2\|_{2}^{2},
$$
for any $u:=(w_1,w_2).$

We choose the Gelfand triple for the slow component
$$
\mathbb{V}_{1} \subseteq \mathbb{H}_{1} \subseteq \mathbb{V}_{1}^{*},
$$
and the following Gelfand triple for the fast component
$$
\mathbb{V}_{2}:=H_{0}^{1}(\mathcal{O}) \subseteq \mathbb{H}_{2}:=L^{2}(\mathcal{O}) \subseteq \mathbb{V}_{2}^{*}.
$$
The operator $\mathfrak{A}_{1}(\cdot): \mathbb{V}_{1} \to \mathbb{V}_{1}^{*}$ is defined by
\begin{eqnarray*}
\mathfrak{A}_1(u):=\left( \begin{aligned}
P_{\mathbb{H}_1}[&\Delta w_1-(w_1 \cdot \nabla) w_1-\nabla w_2 \cdot \Delta w_2] \\
&\Delta w_2-(w_1\cdot \nabla) w_2-\Phi(w_2)
\end{aligned} \right),
\end{eqnarray*}
where $P_{\mathbb{H}_1}$ is the Helmholtz-Leray projection.

Now, we consider the stochastic 2D Liquid crystal equation with fast oscillations,
\begin{eqnarray}\label{54}
\left\{ \begin{aligned}
&d X_{t}^{\varepsilon,\delta}=[\mathfrak{A}_1(X_{t}^{\varepsilon,\delta})+\mathfrak{f}(X_{t}^{\varepsilon,\delta}, Y_{t}^{\varepsilon,\delta})] d t+\sqrt{\varepsilon}\mathfrak{B}_{1}(X_{t}^{\varepsilon,\delta}) d W_{t}^{1}, \\
&d Y_{t}^{\varepsilon,\delta}=\frac{1}{\delta}[\mathcal{R}(Y_{t}^{\varepsilon,\delta})+g(X_{t}^{\varepsilon,\delta}, Y_{t}^{\varepsilon,\delta})] d t+\frac{1}{\sqrt{\delta}} \mathfrak{B}_{2}(X_{t}^{\varepsilon,\delta}, Y_{t}^{\varepsilon,\delta}) d W_{t}^{2}, \\
&X_{0}^{\varepsilon,\delta}=x \in \mathbb{H}_{1}, Y_{0}^{\varepsilon,\delta}=y \in \mathbb{H}_{2},
\end{aligned} \right.
\end{eqnarray}
where the coefficients $\mathfrak{f},$ $\mathfrak{B}_1,$ $g$ and $\mathfrak{B}_2$ satisfy conditions (\ref{511})-(\ref{516}).
\begin{theorem}\label{t54}
Assume that  (\ref{511})-(\ref{516}) hold. If
\begin{equation*}
\lim _{\varepsilon \to 0} \frac{\delta}{\varepsilon}=0,
\end{equation*}
then  $\{X^{\varepsilon,\delta}:\varepsilon>0\}$ in (\ref{54}) satisfies the LDP on $C([0,T]; \mathbb{H}_{1})$ with the following good rate function
\begin{equation*}
I(f)=\inf _{\{\phi \in L^{2}([0, T] ; \mathscr{U_0}): f=\mathcal{G}^{0}(\int_{0}^{\cdot} \phi_{s} d s)\}}\left \{\frac{1}{2} \int_{0}^{T}\|\phi_{s}\|_{\mathscr{U_0}}^{2} d s\right \},
\end{equation*}
where the measurable map $\mathcal{G}^0:C([0,T];\mathscr{U}_0)\to C([0,T];\mathbb{H}_1)$ is given by
\begin{eqnarray*}
\mathcal{G}^0\Big(\int_0^\cdot\phi_s ds\Big):=\left\{ \begin{aligned}&\bar{X}^\phi,~~\phi\in L^2([0,T]; \mathscr{U});\\
&0,~~~~~~\text{otherwise}.
\end{aligned} \right.
\end{eqnarray*}
Here $\bar{X}^\phi$ is the  solution of the
skeleton equation
\begin{equation*}
\frac{d \bar{X}_{t}^{\phi}}{d t}=\mathfrak{A}_1(\bar{X}_{t}^{\phi})+\bar{\mathfrak{f}}(\bar{X}_{t}^{\phi})+\mathfrak{B}_{1}(\bar{X}_{t}^{\phi})P_1\phi,~\bar{X}_{0}^{\phi}=x.
\end{equation*}
\end{theorem}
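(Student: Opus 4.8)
The plan is to obtain Theorem \ref{t54} as a direct application of the general result Theorem \ref{t1}: it suffices to check that the drift operator $\mathfrak{A}_1$ of the $2$D Liquid crystal system satisfies Hypothesis \ref{hypo1} and that the fast data $\mathfrak{A}_2(u,v):=\mathcal{R}(v)+g(u,v)$ and $\mathfrak{B}_2$ satisfy Hypothesis \ref{hypo2}; once this is done the LDP, the good rate function and the skeleton equation are exactly those furnished by Theorem \ref{t1}. Conditions (\ref{511})--(\ref{513}) already give $(\mathbf{A5})$ and the Lipschitz part of $(\mathbf{H2})$ for $\mathfrak{B}_2$, and (\ref{515}) gives the bound on $\mathfrak{B}_2$ in $(\mathbf{H4})$, so the real work is the structural conditions on $\mathfrak{A}_1$ and on the reaction--diffusion part of $\mathfrak{A}_2$.

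First I would verify Hypothesis \ref{hypo1}. The compactness of $\mathbb{V}_1\subset\mathbb{H}_1$ is Rellich--Kondrachov since $\Lambda\subset\mathbb{R}^2$ is bounded with smooth boundary, and the hemicontinuity $(\mathbf{A1})$ holds because, after pairing, $\lambda\mapsto\,_{\mathbb{V}_1^*}\langle\mathfrak{A}_1(u+\lambda v),w\rangle_{\mathbb{V}_1}$ is polynomial in $\lambda$. The heart of the matter is the fully local monotonicity $(\mathbf{A2})$: writing $u=(w_1,w_2)$ and $v=(v_1,v_2)$, the linear parts $\Delta w_1$, $\Delta w_2$ give a dissipation $-c\|u-v\|_{\mathbb{V}_1}^2$, the Navier--Stokes-type convection $(w_1\cdot\nabla)w_1$, the director transport $(w_1\cdot\nabla)w_2$, and the coupling term $\nabla w_2\cdot\Delta w_2$ are controlled after integration by parts by the two-dimensional Ladyzhenskaya / Gagliardo--Nirenberg inequality $\|g\|_{L^4}^2\le C\|g\|_{L^2}\|\nabla g\|_{L^2}$ together with interpolation between $\mathbb{H}_1$ and $\mathbb{V}_1$, so that half of the dissipation is absorbed and what remains is bounded by $(C+\rho(u)+\eta(v))\|u-v\|_{\mathbb{H}_1}^2$ with $\rho,\eta$ of the polynomial type (\ref{conloc}); the Ginzburg--Landau nonlinearity $\Phi(w_2)=\psi(|w_2|^2)w_2$ with $a_k>0$ is monotone up to a bounded perturbation (the uniform lower bound on the derivative of $s\mapsto\psi(s^2)s$), so it contributes only to the constant $C$. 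The coercivity $(\mathbf{A3})$ and the growth bound $(\mathbf{A4})$ then follow from the same two-dimensional estimates, using that the convection terms are antisymmetric when tested against $u$; this is the adaptation to the coupled system of the arguments in \cite{liu2015stochastic,RSZ}.

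Next I would verify Hypothesis \ref{hypo2} for $\mathfrak{A}_2(u,v)=\Delta v+c_1v-c_2v^3+g(u,v)$ on the Gelfand triple $\mathbb{V}_2=H_0^1(\mathcal{O})\subset\mathbb{H}_2=L^2(\mathcal{O})\subset\mathbb{V}_2^*$ with $\mathcal{O}\subset\mathbb{R}$ bounded: compactness of $\mathbb{V}_2\subset\mathbb{H}_2$ and hemicontinuity $(\mathbf{H1})$ are clear. For the strict monotonicity $(\mathbf{H2})$, the term $\Delta(v_1-v_2)$ gives $-\|\nabla(v_1-v_2)\|_{L^2}^2\le-\lambda_1\|v_1-v_2\|_{\mathbb{H}_2}^2$, the cubic $-c_2(v_1^3-v_2^3)$ is monotone, $c_1(v_1-v_2)$ and the Lipschitz drift $g$ (constant $L_g$) and diffusion $\mathfrak{B}_2$ (constant $L_{\mathfrak{B}_2}$) are absorbed, and (\ref{516}) makes the net coefficient $\kappa:=2\lambda_1-2L_g-L_{\mathfrak{B}_2}^2$ strictly positive. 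Coercivity $(\mathbf{H3})$ comes from testing against $v$ (the cubic giving $-c_2\|v\|_{L^4}^4$) and Young's inequality, while the growth bound $(\mathbf{H4})$ uses the one-dimensional embedding $H_0^1(\mathcal{O})\hookrightarrow L^\infty(\mathcal{O})$ to control $\|v^3\|_{\mathbb{V}_2^*}$ by a polynomial in $\|v\|_{\mathbb{V}_2},\|v\|_{\mathbb{H}_2}$ plus the Lipschitz bound on $g$. With Hypotheses \ref{hypo1}--\ref{hypo2} in hand, Theorem \ref{t1} applies verbatim and yields the stated LDP.

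The main obstacle will be condition $(\mathbf{A2})$ for the coupled operator $\mathfrak{A}_1$: the mixed term $\nabla w_2\cdot\Delta w_2$ ties the velocity component to second-order derivatives of the director, and it must be balanced against the full $\mathbb{V}_1$-dissipation using only the planar Ladyzhenskaya inequality — it is precisely this estimate that prevents the classical monotone or locally monotone frameworks from applying and makes the fully local monotonicity condition (\ref{conful}) indispensable.
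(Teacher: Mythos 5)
Your proposal is correct and follows essentially the same route as the paper: reduce Theorem \ref{t54} to Theorem \ref{t1} by verifying Hypotheses \ref{hypo1}--\ref{hypo2}, with the key point being the fully local monotonicity $(\mathbf{A2})$ for the coupled Liquid crystal operator (the paper delegates this verification to \cite[Example 4.5]{RSZ}, recording only the resulting $\rho(u)=\|u\|_{\mathbb{H}_1}^{4k}$ and $\eta(v)=\|v\|_{\mathbb{H}_1}^{4k}+\|v\|_{\mathbb{V}_1}^{2}\|v\|_{\mathbb{H}_1}^{2}$). Your sketch of the Ladyzhenskaya/Gagliardo--Nirenberg estimates and of Hypothesis \ref{hypo2} for the reaction--diffusion fast component matches what that reference and the paper's setup of (\ref{511})--(\ref{516}) supply.
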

\begin{proof}
The construction of $\psi$ and the Gagliardo-Nirenberg inequality imply that $\mathfrak{A}_1$ satisfies $({\mathbf{A}}{\mathbf{1}})$-$({\mathbf{A}}{\mathbf{4}})$ (see \cite[Example 4.5]{RSZ} for details). For example, the condition $({\mathbf{A}}{\mathbf{2}})$ reads as
\begin{equation*}
_{\mathbb{V}_{1}^{*}}\left\langle \mathfrak{A}_{1}(u)-\mathfrak{A}_{1}(v), u-v\right\rangle_{\mathbb{V}_{1}} \leq -\frac{1}{2}\|u-v\|_{\mathbb{V}_1}^{2}+C\big(1+\rho(u)+\eta(v)\big)\|u-v\|_{\mathbb{H}_1}^{2},
\end{equation*}
where
$$\rho(u):=\|u\|_{\mathbb{H}_1}^{4k},~\eta(v):=\|v\|_{\mathbb{H}_1}^{4k}+\|v\|_{\mathbb{V}_1}^{2}\|v\|_{\mathbb{H}_1}^{2}.$$
Then Theorem \ref{t54} is a direct consequence of Theorem \ref{t1}.
\end{proof}

\begin{example}
(Tamed 3D Navier-Stokes equation with fast oscillations)
\end{example}

The stochastic tamed 3D Navier-Stokes equation was first proposed by R\"{o}ckner and Zhang in \cite{RZ}, which is helpful to understand certain characteristics of the stochastic 3D Navier-Stokes equation.

Let $\mathbb{T}^3$ be a three dimensional torus. Set
$$
\mathbb{H}^{m}:=\big\{u \in H^{m}_0(\mathbb{T}^3;\mathbb{R}^3): \operatorname{div}(u)=0\big\},
$$
with the norm denoted also by $\|\cdot\|_{m}$ and the inner product denoted  by $\langle\cdot, \cdot\rangle_{m}$. Note that $\mathbb{H}^{0}$ is a closed linear subspace of Hilbert space $L^2(\mathbb{T}^3;\mathbb{R}^3)$.

We choose the Gelfand triple for the slow component
$$
\mathbb{V}_{1}:=\mathbb{H}^{2}(\mathbb{T}^3;\mathbb{R}^3) \subseteq \mathbb{H}_{1}:=\mathbb{H}^{1}(\mathbb{T}^3;\mathbb{R}^3) \subseteq \mathbb{V}_{1}^{*},
$$
and the following Gelfand triple for the fast component
$$
\mathbb{V}_{2}:=H_{0}^{1}(\mathcal{O};\mathbb{R}) \subseteq \mathbb{H}_{2}:=L^{2}(\mathcal{O};\mathbb{R}) \subseteq \mathbb{V}_{2}^{*}.
$$
Let $P_{\mathbb{H}_1}$ be the orthogonal projection from $L^{2}(\mathbb{T}^3;\mathbb{R}^3)$ to $\mathbb{H}_1$. It is well-known that $P_{\mathbb{H}_1}$ commutes with the derivative operators, and that $P_{\mathbb{H}_1}$ can be restricted to a bounded linear operator from $H^m(\mathbb{T}^3;\mathbb{R}^3)$ to $\mathbb{H}^{m}$.
For any $u \in \mathbb{H}^{0}$ and $v \in L^{2}(\mathbb{T}^3;\mathbb{R}^3),$ we have
\begin{equation*}
\langle u, v\rangle_{0}:=\langle u, P_{\mathbb{H}_1} v\rangle_{0}=\langle u, v\rangle_{L^{2}}.
\end{equation*}
Next, we can define the operator $\mathfrak{A}_{1}(\cdot): \mathbb{V}_{1} \to \mathbb{V}_{1}^{*}$ by
\begin{equation*}
\mathfrak{A}_1(u):=P_{\mathbb{H}_1}(\Delta u-(u \cdot \nabla) u-g_N(|u|^2)u),
\end{equation*}
where $g_{N}: \mathbb{R}_{+} \to \mathbb{R}_{+}$ is a smooth function being nonzero only for large arguments, i.e. for a constant $N>0,$
\begin{eqnarray}\label{551}
\left\{ \begin{aligned}
&g_{N}(r)=0, \quad r \leq N ; \\
&g_{N}(r)=(r-N), \quad r \geq N+1 ; \\
&0 \leq g_{N}^{\prime}(r) \leq C, \quad r \geq 0 .
\end{aligned} \right.
\end{eqnarray}

Now, we consider the stochastic tamed 3D Navier-Stokes equation with fast oscillations,
\begin{eqnarray}\label{55}
\left\{ \begin{aligned}
&d X_{t}^{\varepsilon,\delta}=[P_{\mathbb{H}_1}(\Delta X_{t}^{\varepsilon,\delta}-(X_{t}^{\varepsilon,\delta} \cdot \nabla) X_{t}^{\varepsilon,\delta}-g_N(|X_{t}^{\varepsilon,\delta}|^2)X_{t}^{\varepsilon,\delta})+\mathfrak{f}(X_{t}^{\varepsilon,\delta}, Y_{t}^{\varepsilon,\delta})] d t\\
&\quad\quad\quad\,\,\,\,+\sqrt{\varepsilon}\mathfrak{B}_{1}(X_{t}^{\varepsilon,\delta}) d W_{t}^{1}, \\
&dY_{t}^{\varepsilon,\delta}=\frac{1}{\delta}[\mathcal{R}(Y_{t}^{\varepsilon,\delta})+g(X_{t}^{\varepsilon,\delta}, Y_{t}^{\varepsilon,\delta})] d t+\frac{1}{\sqrt{\delta}} \mathfrak{B}_{2}(X_{t}^{\varepsilon,\delta}, Y_{t}^{\varepsilon,\delta}) d W_{t}^{2}, \\
&X_{0}^{\varepsilon,\delta}=x \in \mathbb{H}_{1}, Y_{0}^{\varepsilon,\delta}=y \in \mathbb{H}_{2},
\end{aligned} \right.
\end{eqnarray}
where the coefficients $\mathfrak{f},$ $\mathfrak{B}_1,$ $g$ and $\mathfrak{B}_2$ satisfy conditions (\ref{511})-(\ref{516}).
\begin{theorem}\label{t55}
Assume that $g_N$ satisfies (\ref{551}) and (\ref{511})-(\ref{516}) hold. If
\begin{equation*}
\lim _{\varepsilon \to 0} \frac{\delta}{\varepsilon}=0,
\end{equation*}
then $\{X^{\varepsilon,\delta}:\varepsilon>0\}$ in (\ref{55}) satisfies the LDP on $C([0,T];\mathbb{H}_1)$ with the following good rate function
\begin{equation*}
I(f)=\inf _{\{\phi \in L^{2}([0, T] ; \mathscr{U_0}): f=\mathcal{G}^{0}(\int_{0}^{\cdot} \phi_{s} d s)\}}\left \{\frac{1}{2} \int_{0}^{T}\|\phi_{s}\|_{\mathscr{U_0}}^{2} d s\right \},
\end{equation*}
where the measurable map $\mathcal{G}^0:C([0,T];\mathscr{U}_0)\to C([0,T];\mathbb{H}_1)$ is given by
\begin{eqnarray*}
\mathcal{G}^0\Big(\int_0^\cdot\phi_s ds\Big):=\left\{ \begin{aligned}&\bar{X}^\phi,~~\phi\in L^2([0,T]; \mathscr{U});\\
&0,~~~~~~\text{otherwise}.
\end{aligned} \right.
\end{eqnarray*}
Here $\bar{X}^\phi$ is the  solution of the
skeleton equation
\begin{equation*}
\frac{d \bar{X}_{t}^{\phi}}{d t}=\mathfrak{A}_1(\bar{X}_{t}^{\phi})+\bar{\mathfrak{f}}(\bar{X}_{t}^{\phi})+\mathfrak{B}_{1}(\bar{X}_{t}^{\phi})P_1\phi,~\bar{X}_{0}^{\phi}=x.
\end{equation*}
\end{theorem}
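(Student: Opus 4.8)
The plan is to obtain Theorem~\ref{t55} as a direct consequence of Theorem~\ref{t1}: the whole task reduces to checking that the coefficients of (\ref{55}) satisfy Hypotheses~\ref{hypo1}--\ref{hypo2}, after which the LDP, the explicit good rate function and the skeleton equation are inherited word for word. Two ingredients are immediate. The compactness of the embeddings $\mathbb{V}_1=\mathbb{H}^2\hookrightarrow\mathbb{H}_1=\mathbb{H}^1$ on the torus $\mathbb{T}^3$ and of $\mathbb{V}_2=H^1_0(\mathcal{O})\hookrightarrow\mathbb{H}_2=L^2(\mathcal{O})$ follows from the Rellich--Kondrachov theorem. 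And the fast component is the same reaction--diffusion system $\mathfrak{A}_2(u,v)=\mathcal{R}(v)+g(u,v)=\Delta v+c_1v-c_2v^3+g(u,v)$ treated in the previous examples, so conditions $({\mathbf{H}}{\mathbf{1}})$--$({\mathbf{H}}{\mathbf{4}})$ with $\alpha_2=2$ follow from standard arguments (cf.~\cite{liu2015stochastic}): hemicontinuity from the polynomial structure, strict monotonicity $({\mathbf{H}}{\mathbf{2}})$ from dissipativity of $\Delta$, the monotone decreasing character of $v\mapsto-c_2v^3$, the Lipschitz bounds (\ref{512}) on $g$ and (\ref{514}) on $\mathfrak{B}_2$ together with the gap condition (\ref{516}) producing $\kappa>0$; coercivity $({\mathbf{H}}{\mathbf{3}})$ from $-c_2\int_{\mathcal{O}}v^4\le0$ and Poincar\'e's inequality; growth $({\mathbf{H}}{\mathbf{4}})$ from $H^1_0(\mathcal{O})\hookrightarrow L^4(\mathcal{O})$ and (\ref{515}).

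The substantive part is Hypothesis~\ref{hypo1} for the tamed operator $\mathfrak{A}_1(u)=P_{\mathbb{H}_1}\big(\Delta u-(u\cdot\nabla)u-g_N(|u|^2)u\big)$ in the Gelfand triple $\mathbb{V}_1=\mathbb{H}^2\subset\mathbb{H}_1=\mathbb{H}^1\subset\mathbb{V}_1^*$ with $\alpha_1=2$. Hemicontinuity $({\mathbf{A}}{\mathbf{1}})$ is clear, each term being a polynomial nonlinearity composed with bounded linear maps. For the growth bound $({\mathbf{A}}{\mathbf{4}})$ I would estimate, by duality against test functions in $\mathbb{H}^2$ and the three-dimensional embeddings $\mathbb{H}^1\hookrightarrow L^4$ and $\mathbb{H}^2\hookrightarrow L^\infty$, that $\|\Delta u\|_{\mathbb{V}_1^*}\le\|u\|_{\mathbb{V}_1}$, $\|(u\cdot\nabla)u\|_{\mathbb{V}_1^*}\le C\|u\|_{\mathbb{H}_1}^2$ and $\|g_N(|u|^2)u\|_{\mathbb{V}_1^*}\le C\||u|^3\|_{L^1}\le C\|u\|_{\mathbb{H}_1}^3$, whence $\|\mathfrak{A}_1(u)\|_{\mathbb{V}_1^*}^{2}\le C(1+\|u\|_{\mathbb{V}_1}^2)(1+\|u\|_{\mathbb{H}_1}^{6})$, i.e.\ $({\mathbf{A}}{\mathbf{4}})$ with $\beta_1=6$. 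Coercivity $({\mathbf{A}}{\mathbf{3}})$ with the same $\alpha_1=2$ follows from $\langle\Delta u,u\rangle_{\mathbb{H}^1}=-\|\nabla u\|_{L^2}^2-\|\Delta u\|_{L^2}^2\le-\tfrac{1}{2}\|u\|_{\mathbb{V}_1}^2+C\|u\|_{\mathbb{H}_1}^2$, from the fact that $-\langle g_N(|u|^2)u,u\rangle_{\mathbb{H}^1}$ is bounded above and absorbs the top-order part of $-\langle(u\cdot\nabla)u,u\rangle_{\mathbb{H}^1}$, and from $\|\mathfrak{B}_1(u)\|_{L_2(\mathbb{U}_1,\mathbb{H}_1)}^2\le C(1+\|u\|_{\mathbb{H}_1}^2)$; finally $({\mathbf{A}}{\mathbf{5}})$ is precisely the Lipschitz bounds on $\mathfrak{f}$ and $\mathfrak{B}_1$ recorded in (\ref{511}). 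Combining this with the fast-component verification, Theorem~\ref{t1} applies and yields the stated LDP on $C([0,T];\mathbb{H}_1)$.

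I expect the main obstacle to be the fully local monotonicity $({\mathbf{A}}{\mathbf{2}})$, namely
\begin{equation*}
{}_{\mathbb{V}_1^{*}}\langle\mathfrak{A}_1(u)-\mathfrak{A}_1(v),u-v\rangle_{\mathbb{V}_1}\le-\tfrac{1}{2}\|u-v\|_{\mathbb{V}_1}^2+\big(C+\rho(u)+\eta(v)\big)\|u-v\|_{\mathbb{H}_1}^2
\end{equation*}
with $\rho,\eta$ of polynomial type obeying (\ref{conloc}) for a $\beta_1$ compatible with $({\mathbf{A}}{\mathbf{4}})$. The viscous term supplies the coercive $-\tfrac{1}{2}\|u-v\|_{\mathbb{V}_1}^2$; the convective difference has to be controlled through the cancellation $\langle(w\cdot\nabla)w,w\rangle_{L^2}=0$ combined with sharp Gagliardo--Nirenberg interpolation in dimension three, which generates the factors $\rho,\eta$; and the taming difference has to be handled by exploiting $0\le g_N'\le C$ and $g_N\ge0$ from (\ref{551}), which make it essentially monotone up to a remainder absorbable into $\tfrac{1}{4}\|u-v\|_{\mathbb{V}_1}^2$ and into $\|u-v\|_{\mathbb{H}_1}^2$. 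This is the delicate estimate of R\"{o}ckner and Zhang \cite{RZ} for the tamed $3$D Navier--Stokes equation, recast within the fully local monotone framework in \cite{RSZ}; verifying that the $\rho,\eta$ so produced meet (\ref{conloc}) is the last bookkeeping step, after which everything combines to give Theorem~\ref{t55}.
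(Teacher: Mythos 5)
Your proposal is correct and follows essentially the same route as the paper: reduce Theorem \ref{t55} to Theorem \ref{t1} by verifying Hypotheses \ref{hypo1}--\ref{hypo2}, with the fully local monotonicity $({\mathbf{A}}{\mathbf{2}})$ for the tamed operator as the key point. The only difference is one of completeness rather than method: the paper carries out the estimates for $({\mathbf{A}}{\mathbf{2}})$--$({\mathbf{A}}{\mathbf{4}})$ explicitly (splitting $\mathfrak{A}_1$ into viscous, convective and taming parts and invoking the bounds (3.31)--(3.32) of \cite{HLLL} to produce $\rho(u)=\|u\|_{1}^{4}$, $\eta(v)=\|v\|_{1}^{4}+\|v\|_{1}\|v\|_{2}$), whereas you defer that computation to \cite{RZ,RSZ}; your identification of the structure, the exponents $\alpha_1=2$, $\beta_1=6$, and the role of the cancellation and the taming term all agree with the paper's argument.
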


\begin{proof}
We are going to check that $\mathfrak{A}_1$ satisfies $({\mathbf{A}}{\mathbf{1}})$-$({\mathbf{A}}{\mathbf{4}}).$ Let us denote the operator
\begin{equation*}
\widetilde{\mathfrak{A}}(u):=P_{\mathbb{H}_1}(\Delta u),~~~~\widehat{\mathfrak{A}}(u):=P_{\mathbb{H}_1}((u \cdot \nabla) u).
\end{equation*}
For any $u,~v \in \mathbb{V}_1,$ it is easy to see that
\begin{eqnarray}\label{552}
_{\mathbb{V}_{1}^{*}}\left\langle \mathfrak{A}_{1}(u)-\mathfrak{A}_{1}(v), u-v\right\rangle_{\mathbb{V}_{1}}= \!\!\!\!\!\!\!\!&& \langle \widetilde{\mathfrak{A}} (u-v),(I-\Delta) (u-v)\rangle_{0}  \nonumber\\
\!\!\!\!\!\!\!\!&& -\langle \widehat{\mathfrak{A}}(u)-\widehat{\mathfrak{A}}(v),(I-\Delta) (u-v)\rangle_{0} \nonumber\\
\!\!\!\!\!\!\!\!&& -\langle P_{\mathbb{H}_1}(g_{N}(|u|^{2}) u)-P_{\mathbb{H}_1}(g_{N}(|v|^{2}) v),(I-\Delta) (u-v)\rangle_{0}\nonumber\\
\!\!\!\!\!\!\!\!&&=:\uppercase\expandafter{\romannumeral1}+\uppercase\expandafter{\romannumeral2}+\uppercase\expandafter{\romannumeral3}.
\end{eqnarray}
Below, we estimate the terms in (\ref{552}), respectively.
\begin{eqnarray}\label{553}
\uppercase\expandafter{\romannumeral1}= \!\!\!\!\!\!\!\!&& -\langle(I-\Delta) (u-v),(I-\Delta) (u-v)\rangle_{0}+\langle u-v,(I-\Delta) (u-v)\rangle_{0} \nonumber\\
\!\!\!\!\!\!\!\!&&=-\|u-v\|_{2}^{2}+\|u-v\|_{0}^{2}+\|\nabla (u-v)\|_{0}^{2}\nonumber\\
\!\!\!\!\!\!\!\!&&\leq-\|u-v\|_{2}^{2}+C \|u-v\|_{1}^{2}.
\end{eqnarray}
According to \cite[(3.31)]{HLLL} and Young's inequality, we have
\begin{eqnarray}\label{554}
\uppercase\expandafter{\romannumeral2} \leq \!\!\!\!\!\!\!\!&& \|\widehat{\mathfrak{A}}(u)-\widehat{\mathfrak{A}}(v)\|_{0}\|u-v\|_{2}  \nonumber\\
\leq \!\!\!\!\!\!\!\!&& C \|u-v\|_{2}^{\frac{3}{2}}\|u-v\|_{1}^{\frac{1}{2}}\|u\|_{1}+ C \|v\|_{2}^{\frac{1}{2}}\|v\|_{1}^{\frac{1}{2}}\|u-v\|_{1}\|u-v\|_{2} \nonumber\\
\leq \!\!\!\!\!\!\!\!&&\frac{1}{4} \|u-v\|_{2}^{2}+C \|u-v\|_{1}^{2}\|u\|_{1}^{4}+C \|v\|_{2}\|v\|_{1}\|u-v\|_{1}^{2}.
\end{eqnarray}
For $\uppercase\expandafter{\romannumeral3},$ by \cite[(3.32)]{HLLL} and Young's inequality, it follows that
\begin{eqnarray}\label{555}
\uppercase\expandafter{\romannumeral3} \!\!\!\!\!\!\!\!&&\leq \|g_{N}(|u|^{2}) u-g_{N}(|v|^{2}) v\|_{0}\|u-v\|_{2}  \nonumber\\
\!\!\!\!\!\!\!\!&& \leq C \|u-v\|_{1}(\|u\|_{1}^{2}+\|v\|_{1}^{2})\|u-v\|_{2} \nonumber\\
\!\!\!\!\!\!\!\!&& \leq \frac{1}{4} \|u-v\|_{2}^{2}+C\|u-v\|_{1}^{2}(\|u\|_{1}^{4}+\|v\|_{1}^{4}),
\end{eqnarray}
Substituting (\ref{553})-(\ref{555}) into (\ref{552}), we have
\begin{equation*}
_{\mathbb{V}_{1}^{*}}\left\langle \mathfrak{A}_{1}(u)-\mathfrak{A}_{1}(v), u-v\right\rangle_{\mathbb{V}_{1}}
\leq -\frac{1}{2} \|u-v\|_{2}^{2}+ C\big(1+\rho(u)+\eta(v)\big)\|u-v\|_{1}^{2},
\end{equation*}
where $$\rho(u):=\|u\|_{1}^{4},~\eta(v):=\|v\|_{1}^{4}+\|v\|_{1}\|v\|_{2}.$$
Therefore, $({\mathbf{A}}{\mathbf{2}})$ is satisfied. Using Young's inequality, the definition of function $g_N$ and \cite[(3.7)]{HLLL}, it follows that
\begin{eqnarray*}
_{\mathbb{V}_{1}^{*}}\left\langle \mathfrak{A}_{1}(u), u\right\rangle_{\mathbb{V}_{1}}= \!\!\!\!\!\!\!\!&& \langle \widetilde{\mathfrak{A}} (u),(I-\Delta) u\rangle_{0}-\langle \widehat{\mathfrak{A}}(u),(I-\Delta)u\rangle_{0} -\langle P_{\mathbb{H}_1}(g_{N}(|u|^{2}) u),(I-\Delta) u\rangle_{0}\nonumber\\
\!\!\!\!\!\!\!\!&&\leq-\|u\|_{2}^{2}+C \|u\|_{1}^{2}+\|\widetilde{\mathfrak{A}} (u)\|_0\|(I-\Delta) u\|_0-\||u|\cdot |\nabla u|\|_0^2+N\|\nabla u\|_0^2\nonumber\\
\!\!\!\!\!\!\!\!&&\leq-\|u\|_{2}^{2}+C_N \|u\|_{1}^{2}+\frac{1}{2}\|(I-\Delta) u\|^2_0+\frac{1}{2}\|(u\cdot \nabla)u\|_{0}^{2}-\||u|\cdot |\nabla u|\|_0^2\nonumber\\
\!\!\!\!\!\!\!\!&&\leq-\frac{1}{2}\|u\|_{2}^{2}+C_N \|u\|_{1}^{2}-\frac{1}{2}\||u|\cdot |\nabla u|\|_0^2\nonumber\\
\!\!\!\!\!\!\!\!&&\leq-\frac{1}{2}\|u\|_{2}^{2}+C_N \|u\|_{1}^{2}.
\end{eqnarray*}
Thus, the operator $\mathfrak{A}_1(u)$ satisfies condition $({\mathbf{A}}{\mathbf{3}}).$ Due to \cite[(3.31)-(3.32)]{HLLL}, we have
\begin{eqnarray*}
\|\mathfrak{A}_{1}(u)\|_0^2\!\!\!\!\!\!\!\!&&\leq C(\|\Delta u\|_0^2+\|\widehat{\mathfrak{A}}(u)\|_0^2+\|g_N(|u|^2)u\|_0^2)\\
\!\!\!\!\!\!\!\!&&\leq C(1+ \|u\|_2^2+\|u\|_2\|u\|_1^3+\|u\|_1^6).
\end{eqnarray*}
Hence, $({\mathbf{A}}{\mathbf{4}})$ is satisfied. The hemicontinuity condition $({\mathbf{A}}{\mathbf{1}})$
can be easily verified by the dominated convergence theorem.
\end{proof}

\section{Appendix}
\setcounter{equation}{0}
 \setcounter{definition}{0}
In this section, we consider the parameterized equation  associated with the fast component in (\ref{aimequation}) for
any fixed $x \in \mathbb{H}_1$ and the skeleton equation (\ref{skletonequation}). More precisely, we recall  the exponential ergodicity of the parameterized equation,  and then prove the well-posedness of the skeleton equation (\ref{skletonequation}) with some crucial estimates.

For any fixed  $x\in \mathbb{H}_1$, we consider the following parameterized equation
\begin{eqnarray}\label{frozenequation}
\left\{ \begin{aligned}
&d Y_{t}=\mathfrak{A}_{2}(x, Y_{t}) d t+\mathfrak{B}_{2}(x, Y_{t}) d \widetilde{W}_{t}^{2}, \\
&Y_{0}=y \in \mathbb{H}_{2},
\end{aligned}\right.
\end{eqnarray}
where $\widetilde{W}_{t}^{2}$ denotes an $\mathbb{U}_{2}$-valued cylindrical Wiener process on another complete filtered probability space $(\Omega, \mathscr{F},\mathbb{P})$, which is independent of $W^2_t$.

Assume that $({\mathbf{H}}{\mathbf{1}})$-$({\mathbf{H}}{\mathbf{4}})$  hold, from \cite[Theorem 4.2.4]{liu2015stochastic}, we can get for any fixed $x$ there exists a unique solution denoted by $Y_{t}^{x, y}$ to the parameterized equation  (\ref{frozenequation}), which is a homogeneous Markov process.

Set $P_{t}^{x}$ is the Markov transition semigroup of process $Y_{t}^{x, y},$ we have for any bounded measurable map $\varphi:\mathbb{H}_2\to \mathbb{R}$,
$$P^{x}_t\varphi(y):=\mathbb E[\varphi(Y^{x,y}_t)],~~y\in \mathbb{H}_2,\,t>0.$$
By \cite[Theorem 4.3.9]{liu2009harnack}, $P_{t}^{x}$ has the unique invariant probability measure $\mu^{x}$.

The following estimates and the exponential ergodicity result are from \cite[Propositions 4.1-4.2]{HLL4}.
\begin{lemma}
There exists a constant $C>0$ such that for any $x,~x_{1},~x_{2} \in \mathbb{H}_{1},~y \in \mathbb{H}_{2}$, we have
\begin{equation}
\sup _{t \in[0, \infty)} {\mathbb{E}}\|Y_{t}^{x_{1}, y}-Y_{t}^{x_{2}, y}\|_{\mathbb{H}_{2}}^{2} \leq C\|x_{1}-x_{2}\|_{\mathbb{H}_{1}}^{2},
\end{equation}
\begin{equation}\label{frozenestimate}
\sup _{t \in[0, \infty)} {\mathbb{E}}\|Y_{t}^{x, y}\|_{\mathbb{H}_{2}}^{2} \leq C(1+\|x\|_{\mathbb{H}_{1}}^{2}+\|y\|_{\mathbb{H}_{2}}^{2}).
\end{equation}
\end{lemma}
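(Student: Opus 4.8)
The plan is to derive both inequalities from It\^{o}'s formula applied to the square of the $\mathbb{H}_2$-norm of the relevant process, exploiting in an essential way the strict dissipativity built into $({\mathbf{H}}{\mathbf{2}})$--$({\mathbf{H}}{\mathbf{4}})$. This is a simplified instance (no control term, and the slow variable is frozen) of the a priori estimates already carried out in Section 3 for the controlled fast component, and the two bounds are exactly \cite[Propositions 4.1--4.2]{HLL4}, so it suffices to indicate the main steps.

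For the uniform second-moment bound I would apply It\^{o}'s formula to $t\mapsto\|Y_t^{x,y}\|_{\mathbb{H}_2}^2$ along the variational solution of \eqref{frozenequation}, producing the drift $2\,_{\mathbb{V}_2^*}\langle\mathfrak{A}_2(x,Y_s^{x,y}),Y_s^{x,y}\rangle_{\mathbb{V}_2}+\|\mathfrak{B}_2(x,Y_s^{x,y})\|_{L_2(\mathbb{U}_2,\mathbb{H}_2)}^2$ plus a local martingale. By the coercivity-type estimate \eqref{e1} (which holds for $\mathfrak{A}_2$ under $({\mathbf{H}}{\mathbf{2}})$--$({\mathbf{H}}{\mathbf{3}})$ for some $\lambda\in(0,\kappa)$) together with the uniform growth bound $\sup_{v\in\mathbb{V}_2}\|\mathfrak{B}_2(x,v)\|_{L_2(\mathbb{U}_2,\mathbb{H}_2)}\le C(1+\|x\|_{\mathbb{H}_1})$ from $({\mathbf{H}}{\mathbf{4}})$, this drift is bounded above by $-\lambda\|Y_s^{x,y}\|_{\mathbb{H}_2}^2+C(1+\|x\|_{\mathbb{H}_1}^2)$. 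Killing the martingale by a localizing sequence of stopping times, taking expectations and letting the localization run out, one obtains the integral form of $\frac{d}{dt}\mathbb{E}\|Y_t^{x,y}\|_{\mathbb{H}_2}^2\le-\lambda\,\mathbb{E}\|Y_t^{x,y}\|_{\mathbb{H}_2}^2+C(1+\|x\|_{\mathbb{H}_1}^2)$; Gronwall's lemma then gives $\mathbb{E}\|Y_t^{x,y}\|_{\mathbb{H}_2}^2\le e^{-\lambda t}\|y\|_{\mathbb{H}_2}^2+\frac{C}{\lambda}(1+\|x\|_{\mathbb{H}_1}^2)$, which is dominated by $C(1+\|x\|_{\mathbb{H}_1}^2+\|y\|_{\mathbb{H}_2}^2)$ uniformly in $t\ge0$.

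For the Lipschitz dependence on the frozen parameter, set $D_t:=Y_t^{x_1,y}-Y_t^{x_2,y}$, so that $D_0=0$ and $dD_t=[\mathfrak{A}_2(x_1,Y_t^{x_1,y})-\mathfrak{A}_2(x_2,Y_t^{x_2,y})]\,dt+[\mathfrak{B}_2(x_1,Y_t^{x_1,y})-\mathfrak{B}_2(x_2,Y_t^{x_2,y})]\,d\widetilde{W}_t^2$, and apply It\^{o}'s formula to $\|D_t\|_{\mathbb{H}_2}^2$. The drift together with the quadratic-variation term equals $2\,_{\mathbb{V}_2^*}\langle\mathfrak{A}_2(x_1,Y_s^{x_1,y})-\mathfrak{A}_2(x_2,Y_s^{x_2,y}),D_s\rangle_{\mathbb{V}_2}+\|\mathfrak{B}_2(x_1,Y_s^{x_1,y})-\mathfrak{B}_2(x_2,Y_s^{x_2,y})\|_{L_2(\mathbb{U}_2,\mathbb{H}_2)}^2$, which by the strict monotonicity condition $({\mathbf{H}}{\mathbf{2}})$ is at most $-\kappa\|D_s\|_{\mathbb{H}_2}^2+C\|x_1-x_2\|_{\mathbb{H}_1}^2$. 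The same localization, expectation and Gronwall routine then yields $\mathbb{E}\|D_t\|_{\mathbb{H}_2}^2\le\frac{C}{\kappa}\|x_1-x_2\|_{\mathbb{H}_1}^2(1-e^{-\kappa t})\le C\|x_1-x_2\|_{\mathbb{H}_1}^2$, uniformly in $t\ge0$.

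The one point that genuinely needs care, and the reason the hypotheses demand \emph{strict} dissipativity ($\lambda,\kappa>0$) rather than mere monotonicity, is the uniformity in time: it is precisely the negative linear feedback $-\lambda\,\mathbb{E}\|Y_t^{x,y}\|_{\mathbb{H}_2}^2$ (resp.\ $-\kappa\,\mathbb{E}\|D_t\|_{\mathbb{H}_2}^2$) that replaces the usual growing Gronwall factor $e^{Ct}$ by the bounded quantity $1-e^{-\lambda t}$ (resp.\ $1-e^{-\kappa t}$). Everything else — the validity of the It\^{o} formula for variational solutions, the stopping-time localization removing the stochastic integral, and the resulting comparison inequality — is routine and parallels the proofs of Lemmas \ref{controlestimate} and \ref{auxiliaryestimate}; accordingly these estimates can simply be quoted from \cite{HLL4}.
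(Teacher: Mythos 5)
Your proposal is correct and matches the paper: the paper offers no proof of this lemma at all, simply quoting it from \cite[Propositions 4.1--4.2]{HLL4}, and your It\^{o}--dissipativity--Gronwall sketch is precisely the standard argument behind those propositions (and parallels the paper's own Lemmas \ref{controlestimate} and \ref{auxiliaryestimate}). One small caveat: in the difference estimate you invoke $({\mathbf{H}}{\mathbf{2}})$ with two different first arguments $x_1,x_2$ in both $\mathfrak{A}_2$ and $\mathfrak{B}_2$, whereas as typeset in this paper $({\mathbf{H}}{\mathbf{2}})$ only compares $\mathfrak{A}_2(u_1,v_1)$ with $\mathfrak{A}_2(u_1,v_2)$; you therefore need the cross-argument form of the condition as in \cite{HLL4}, which is evidently what is intended here, since otherwise the term $C\|u_1-u_2\|_{\mathbb{H}_1}^2$ on its right-hand side would be vacuous.
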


\begin{lemma}\label{expon}
Assume $({\mathbf{H}}{\mathbf{1}})$-$({\mathbf{H}}{\mathbf{4}})$ hold and the embedding $\mathbb{V}_2 \subset \mathbb{H}_2$ is compact. Then there is a constant $C>0$ such that for any $x \in \mathbb{H}_{1},~y \in \mathbb{H}_{2}$ and Lipschitz function $\mathfrak{f}:\mathbb{H}_1\times\mathbb{H}_2\to \mathbb{H}_1$, we have
$$\Big\|\mathbb{E}\mathfrak{f}(x,Y^{x,y}_t)-\bar{\mathfrak{f}}(x)\Big\|_{\mathbb{H}_1}\leq C(1+\|x\|_{\mathbb{H}_1}+\|y\|_{\mathbb{H}_2})e^{-\frac {\kappa t}2}.$$
\end{lemma}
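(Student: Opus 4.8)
The plan is to exploit the invariance of $\mu^x$ together with the strict monotonicity condition $({\mathbf{H}}{\mathbf{2}})$, which forces two solutions of the parameterized equation (\ref{frozenequation}) started at different initial data to converge to each other exponentially fast in $\mathbb{H}_2$. First, since $\mu^x$ is invariant for the semigroup $P^x_t$ and $\mathfrak{f}(x,\cdot)$ has linear growth (being Lipschitz) so that $z\mapsto\mathbb{E}\mathfrak{f}(x,Y^{x,z}_t)$ is $\mu^x$-integrable, Fubini's theorem gives, for every $t\geq0$,
\[
\bar{\mathfrak{f}}(x)=\int_{\mathbb{H}_2}\mathfrak{f}(x,z)\,\mu^x(dz)=\int_{\mathbb{H}_2}\mathbb{E}\big[\mathfrak{f}(x,Y^{x,z}_t)\big]\,\mu^x(dz).
\]
Consequently
\[
\mathbb{E}\mathfrak{f}(x,Y^{x,y}_t)-\bar{\mathfrak{f}}(x)=\int_{\mathbb{H}_2}\Big(\mathbb{E}\big[\mathfrak{f}(x,Y^{x,y}_t)\big]-\mathbb{E}\big[\mathfrak{f}(x,Y^{x,z}_t)\big]\Big)\mu^x(dz),
\]
and by the Lipschitz continuity of $\mathfrak{f}$ in the second argument together with Jensen's inequality,
\[
\big\|\mathbb{E}\mathfrak{f}(x,Y^{x,y}_t)-\bar{\mathfrak{f}}(x)\big\|_{\mathbb{H}_1}\leq C\int_{\mathbb{H}_2}\mathbb{E}\|Y^{x,y}_t-Y^{x,z}_t\|_{\mathbb{H}_2}\,\mu^x(dz).
\]

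Second, I would establish the contraction estimate $\mathbb{E}\|Y^{x,y}_t-Y^{x,z}_t\|_{\mathbb{H}_2}^2\leq e^{-\kappa t}\|y-z\|_{\mathbb{H}_2}^2$. Applying It\^o's formula to $\|Y^{x,y}_t-Y^{x,z}_t\|_{\mathbb{H}_2}^2$, taking expectation to remove the local martingale term, and using $({\mathbf{H}}{\mathbf{2}})$ with $u_1=u_2=x$ (so that the $C\|u_1-u_2\|_{\mathbb{H}_1}^2$ term vanishes) yields
\[
\frac{d}{dt}\mathbb{E}\|Y^{x,y}_t-Y^{x,z}_t\|_{\mathbb{H}_2}^2\leq-\kappa\,\mathbb{E}\|Y^{x,y}_t-Y^{x,z}_t\|_{\mathbb{H}_2}^2 ,
\]
so Gronwall's lemma gives the claimed bound, and by Jensen's inequality $\mathbb{E}\|Y^{x,y}_t-Y^{x,z}_t\|_{\mathbb{H}_2}\leq\big(\mathbb{E}\|Y^{x,y}_t-Y^{x,z}_t\|_{\mathbb{H}_2}^2\big)^{1/2}\leq e^{-\kappa t/2}\|y-z\|_{\mathbb{H}_2}$.

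Third, a first moment bound on the invariant measure is needed. From (\ref{frozenestimate}) one has $\sup_{t\geq0}\mathbb{E}\|Y^{x,0}_t\|_{\mathbb{H}_2}^2\leq C(1+\|x\|_{\mathbb{H}_1}^2)$; since $\mu^x$ is, via the Krylov--Bogoliubov construction, a weak limit of time averages of the laws of $Y^{x,0}_t$, the lower semicontinuity of $z\mapsto\|z\|_{\mathbb{H}_2}^2$ gives $\int_{\mathbb{H}_2}\|z\|_{\mathbb{H}_2}^2\mu^x(dz)\leq C(1+\|x\|_{\mathbb{H}_1}^2)$, and hence by Cauchy--Schwarz $\int_{\mathbb{H}_2}\|z\|_{\mathbb{H}_2}\mu^x(dz)\leq C(1+\|x\|_{\mathbb{H}_1})$. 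Inserting the contraction estimate of the previous step and this moment bound into the integral above,
\[
\big\|\mathbb{E}\mathfrak{f}(x,Y^{x,y}_t)-\bar{\mathfrak{f}}(x)\big\|_{\mathbb{H}_1}\leq Ce^{-\kappa t/2}\Big(\|y\|_{\mathbb{H}_2}+\int_{\mathbb{H}_2}\|z\|_{\mathbb{H}_2}\mu^x(dz)\Big)\leq C(1+\|x\|_{\mathbb{H}_1}+\|y\|_{\mathbb{H}_2})e^{-\kappa t/2},
\]
which is the assertion.

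The main obstacle is the exponential $\mathbb{H}_2$-contraction between solutions with different starting points, but once $({\mathbf{H}}{\mathbf{2}})$ is invoked after It\^o's formula and Gronwall's lemma this is essentially immediate; the only other points requiring care are the moment bound on $\mu^x$ and the interchange of expectation and $\mu^x$-integration, both of which rely solely on the linear growth of $\mathfrak{f}$, estimate (\ref{frozenestimate}), and standard properties of invariant measures, exactly as in \cite[Propositions 4.1-4.2]{HLL4}.
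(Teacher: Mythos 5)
Your argument is correct in substance, and it is worth noting that the paper itself does not prove Lemma \ref{expon} at all: it imports it, together with the moment estimates including (\ref{frozenestimate}), from \cite[Propositions 4.1--4.2]{HLL4}. What you wrote is exactly the standard argument underlying that citation: synchronous coupling, the contraction $\mathbb{E}\|Y^{x,y}_t-Y^{x,z}_t\|_{\mathbb{H}_2}^2\le e^{-\kappa t}\|y-z\|_{\mathbb{H}_2}^2$ obtained from $({\mathbf{H}}{\mathbf{2}})$ with $u_1=u_2=x$ via It\^o's formula and Gronwall, invariance of $\mu^x$, and a second-moment bound on $\mu^x$. Three routine points should be made explicit to make it airtight: (i) in the contraction step both solutions must be driven by the same Wiener process $\widetilde W^2$ (this is the coupling you implicitly use), and the expectation of the stochastic-integral term should be removed by a localization/stopping-time argument followed by Fatou or monotone convergence rather than asserted directly; (ii) the identity $\bar{\mathfrak{f}}(x)=\int_{\mathbb{H}_2}\mathbb{E}\big[\mathfrak{f}(x,Y^{x,z}_t)\big]\mu^x(dz)$ applies invariance to an unbounded, $\mathbb{H}_1$-valued integrand, so one should pass through the scalar functionals $\langle\mathfrak{f}(x,\cdot),e\rangle_{\mathbb{H}_1}$ (or truncate) and use the linear growth of $\mathfrak{f}$ together with the finite second moment of $\mu^x$ that you establish; (iii) the Krylov--Bogoliubov identification of $\mu^x$ as a limit of Ces\`aro averages needs tightness of those averages, which here follows from the coercivity $({\mathbf{H}}{\mathbf{3}})$ and the compact embedding $\mathbb{V}_2\subset\mathbb{H}_2$ assumed in the lemma, plus the uniqueness of the invariant measure to identify the limit; alternatively, the bound $\int_{\mathbb{H}_2}\|z\|_{\mathbb{H}_2}^2\mu^x(dz)\le C(1+\|x\|_{\mathbb{H}_1}^2)$ can be extracted directly from invariance, (\ref{frozenestimate}) and a truncation argument. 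With these additions your proof is complete and coincides with the argument of \cite{HLL4}.
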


Now we recall the skeleton equation (\ref{skletonequation}):
\begin{eqnarray}\label{sk2}
\left\{\begin{aligned}
&\displaystyle\frac{d \bar{X}_{t}^{\phi}}{d t}=\mathfrak{A}_1(\bar{X}_{t}^{\phi})+\bar{\mathfrak{f}}(\bar{X}_{t}^{\phi})+\mathfrak{B}_{1}(\bar{X}_{t}^{\phi})P_1\phi,\\
&\bar{X}_{0}^{\phi}=x.
\end{aligned}\right.
\end{eqnarray}

\begin{lemma}\label{skletonproof}
Assume $({\mathbf{A}}{\mathbf{1}})$-$({\mathbf{A}}{\mathbf{5}})$ hold. Then skeleton equation (\ref{skletonequation}) has a unique solution. Moreover,
\begin{equation}
\sup _{\phi \in S_{M}}\Big\{\sup _{t \in[0, T]}\|\bar{X}_{t}^{\phi}\|_{\mathbb{H}_{1}}^{2}+ \int_{0}^{T}\|\bar{X}_{t}^{\phi}\|_{\mathbb{V}_{1}}^{\alpha_{1}} d t\Big\} \leq C_{T, M}(1+\|x\|_{\mathbb{H}_{1}}^2).\label{seletonestimate}
\end{equation}
\end{lemma}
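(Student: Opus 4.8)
The plan is to treat (\ref{skletonequation}) as a deterministic evolution equation governed by the pseudo-monotone operator $\mathfrak{A}_1$, following the classical Galerkin scheme but invoking pseudo-monotonicity (Remark \ref{re2.2}) in place of monotonicity, and to prove uniqueness directly from the fully local monotonicity $(\mathbf{A}2)$. The preliminary step is to record the two structural facts about the averaged drift $\bar{\mathfrak{f}}(x)=\int_{\mathbb{H}_2}\mathfrak{f}(x,z)\mu^x(dz)$ that are needed here and already used in Section \ref{Sub4.3}: $\bar{\mathfrak{f}}\colon\mathbb{H}_1\to\mathbb{H}_1$ is of linear growth and globally Lipschitz. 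Linear growth follows from $\|\bar{\mathfrak{f}}(x)\|_{\mathbb{H}_1}\le\int_{\mathbb{H}_2}\|\mathfrak{f}(x,z)\|_{\mathbb{H}_1}\mu^x(dz)$, the bound $\|\mathfrak{f}(x,z)\|_{\mathbb{H}_1}\le C(1+\|x\|_{\mathbb{H}_1}+\|z\|_{\mathbb{H}_2})$ from $(\mathbf{A}5)$, and the second-moment estimate $\int_{\mathbb{H}_2}\|z\|_{\mathbb{H}_2}^2\mu^x(dz)\le C(1+\|x\|_{\mathbb{H}_1}^2)$, obtained from the coercivity $(\mathbf{H}3)$ and the stationarity of $\mu^x$. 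The Lipschitz bound follows by writing, via the exponential ergodicity of Lemma \ref{expon}, $\bar{\mathfrak{f}}(x_1)-\bar{\mathfrak{f}}(x_2)=\lim_{t\to\infty}\big(\mathbb{E}\mathfrak{f}(x_1,Y^{x_1,y}_t)-\mathbb{E}\mathfrak{f}(x_2,Y^{x_2,y}_t)\big)$ and bounding the difference uniformly in $t$ by $C\|x_1-x_2\|_{\mathbb{H}_1}+C\,\mathbb{E}\|Y^{x_1,y}_t-Y^{x_2,y}_t\|_{\mathbb{H}_2}$, using $(\mathbf{A}5)$ and the continuous-dependence estimate for the parameterized equation stated in the Appendix.

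For existence I would run a Faedo--Galerkin approximation on $\mathbb{H}_n=\mathrm{span}\{e_1,\dots,e_n\}$ with $e_i\in\mathbb{V}_1$, local solvability of the resulting ODEs being ensured by $(\mathbf{A}1)$ and the continuity of $\bar{\mathfrak{f}},\mathfrak{B}_1$. Testing the Galerkin equation against its solution and using $(\mathbf{A}3)$, the linear growth of $\bar{\mathfrak{f}}$, $(\mathbf{A}5)$, Young's inequality and $\phi\in S_M$ gives the uniform bound
\[
\sup_{n}\Big\{\sup_{t\in[0,T]}\|X_t^n\|_{\mathbb{H}_1}^2+\int_0^T\|X_t^n\|_{\mathbb{V}_1}^{\alpha_1}\,dt\Big\}\le C_{T,M}(1+\|x\|_{\mathbb{H}_1}^2),
\]
which makes the ODEs global and, after passing to the limit by lower semicontinuity, yields (\ref{seletonestimate}). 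By $(\mathbf{A}4)$, $\mathfrak{A}_1(X^n)$ is bounded in $L^{\alpha_1/(\alpha_1-1)}([0,T];\mathbb{V}_1^*)$; hence along a subsequence $X^n\rightharpoonup\bar{X}$ in $L^{\alpha_1}([0,T];\mathbb{V}_1)$ and $\mathfrak{A}_1(X^n)\rightharpoonup\mathscr{A}$ in $L^{\alpha_1/(\alpha_1-1)}([0,T];\mathbb{V}_1^*)$, and since $\frac{d}{dt}X^n$ is bounded in $L^{\alpha_1/(\alpha_1-1)}([0,T];\mathbb{V}_1^*)+L^2([0,T];\mathbb{H}_1)$ with $\mathbb{V}_1\hookrightarrow\mathbb{H}_1$ compact, a Simon-type compactness argument gives $X^n\to\bar{X}$ in $L^{\alpha_1}([0,T];\mathbb{H}_1)\cap C([0,T];\mathbb{V}_1^*)$. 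Passing to the limit in the integrated Galerkin equation, treating the term $\mathfrak{B}_1(X^n)P_1\phi$ exactly as in the proof of (\ref{esB}), identifies $\bar{X}$ with $x+\int_0^\cdot\mathscr{A}(s)\,ds+\int_0^\cdot\bar{\mathfrak{f}}(\bar{X}_s)\,ds+\int_0^\cdot\mathfrak{B}_1(\bar{X}_s)P_1\phi_s\,ds$, so it remains only to show $\mathscr{A}=\mathfrak{A}_1(\bar{X})$.

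This identification is the main obstacle. Since $\mathfrak{A}_1$ is pseudo-monotone (Remark \ref{re2.2}), by the Claim used in Section \ref{Sub4.4} it suffices to verify $\liminf_{n\to\infty}\int_0^T{}_{\mathbb{V}_1^*}\langle\mathfrak{A}_1(X^n_t),X^n_t\rangle_{\mathbb{V}_1}\,dt\ge\int_0^T{}_{\mathbb{V}_1^*}\langle\mathscr{A}(t),\bar{X}_t\rangle_{\mathbb{V}_1}\,dt$. I would obtain this from the energy equality $2\int_0^T{}_{\mathbb{V}_1^*}\langle\mathfrak{A}_1(X^n_t),X^n_t\rangle_{\mathbb{V}_1}\,dt=\|X^n_T\|_{\mathbb{H}_1}^2-\|P_nx\|_{\mathbb{H}_1}^2-2\int_0^T\langle\bar{\mathfrak{f}}(X^n_t)+\mathfrak{B}_1(X^n_t)P_1\phi_t,X^n_t\rangle_{\mathbb{H}_1}\,dt$, letting $n\to\infty$ using the weak lower semicontinuity of $\|\cdot\|_{\mathbb{H}_1}$ (with $X^n_T\rightharpoonup\bar{X}_T$ in $\mathbb{H}_1$, which follows from the uniform bound together with $C([0,T];\mathbb{V}_1^*)$-convergence), the strong $L^2$-convergence of $X^n$ and the Lipschitz continuity of $\bar{\mathfrak{f}},\mathfrak{B}_1$, and then comparing with the energy equality for the limiting equation. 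The delicate point is that $\rho,\eta$ from $(\mathbf{A}2)$ are controlled only through (\ref{conloc}) combined with the uniform energy bound, and that the control term $\mathfrak{B}_1(\cdot)P_1\phi$ must not spoil the weak/strong convergences just used. Once $\mathscr{A}=\mathfrak{A}_1(\bar{X})$, $\bar{X}$ solves (\ref{skletonequation}).

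Finally, for uniqueness, if $\bar{X}^1,\bar{X}^2$ solve (\ref{skletonequation}) with the same data then $Z:=\bar{X}^1-\bar{X}^2$ satisfies, by the energy equality, $(\mathbf{A}2)$, the Lipschitz property of $\bar{\mathfrak{f}}$, $(\mathbf{A}5)$ and Young's inequality,
\[
\|Z_t\|_{\mathbb{H}_1}^2\le\int_0^t\big(C+\rho(\bar{X}^1_s)+\eta(\bar{X}^2_s)+C\|\phi_s\|_{\mathscr{U}}^2\big)\|Z_s\|_{\mathbb{H}_1}^2\,ds;
\]
the bracketed function is integrable on $[0,T]$ by (\ref{conloc}) together with $\sup_s\|\bar{X}^i_s\|_{\mathbb{H}_1}<\infty$, $\int_0^T\|\bar{X}^i_s\|_{\mathbb{V}_1}^{\alpha_1}\,ds<\infty$ and $\phi\in S_M$, so Gronwall's lemma forces $Z\equiv0$. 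This, together with (\ref{seletonestimate}) established above, completes the proof.
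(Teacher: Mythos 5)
Your proposal is correct in outline, but it follows a genuinely different route from the paper's proof. The paper does not re-run a Galerkin scheme: in its Step 1 it first takes $\phi\in L^{\infty}([0,T];\mathscr{U})$, observes that the perturbed drift $u\mapsto \mathfrak{A}_1(u)+\bar{\mathfrak{f}}(u)+\mathfrak{B}_1(u)P_1\phi_t$ still fits the existing (fully local monotone) well-posedness framework so that $\bar X^{\phi}$ exists by citation, and then treats a general $\phi\in L^2([0,T];\mathscr{U})$ by approximating it with bounded controls $\phi^n$ and showing, via $({\mathbf{A}}{\mathbf{2}})$, the a priori bound (\ref{se1}) and Gronwall, that $\{\bar X^{\phi^n}\}$ is Cauchy in $C([0,T];\mathbb{H}_1)$ with the quantitative estimate $\sup_t\|\bar X^{\phi^n}_t-\bar X^{\phi^m}_t\|_{\mathbb{H}_1}^2\le C_T\int_0^T\|\phi^n_t-\phi^m_t\|_{\mathscr{U}}^2dt$; only the identification of the limit's drift uses the pseudo-monotone Claim, exactly as in your scheme. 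You instead build the solution directly for $\phi\in S_M$ by Faedo--Galerkin. Both routes converge on the same two pillars (the pseudo-monotone identification $\mathscr{A}=\mathfrak{A}_1(\bar X)$ and Gronwall-based uniqueness from $({\mathbf{A}}{\mathbf{2}})$ with $\rho,\eta$ integrable along finite-energy trajectories), and your preliminary derivation of the Lipschitz continuity and linear growth of $\bar{\mathfrak{f}}$ from Lemmas \ref{expon} and 6.1 is sound (the paper imports these from \cite{HLL4}). The trade-off: the paper's approximation-in-$\phi$ argument delegates all Galerkin technicalities to the literature and yields continuity of $\phi\mapsto\bar X^{\phi}$ as a by-product, whereas your self-contained construction must additionally justify the standard but nontrivial points you gloss over — Carath\'eodory solvability of the finite-dimensional ODEs with an $L^2$-in-time control, and the uniform boundedness of the Galerkin projections on $\mathbb{V}_1^*$ needed for the Simon compactness step. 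These are fixable with the usual choice of basis, so I would count them as omissions of routine detail rather than gaps.
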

\begin{proof}
The proof can be divided into two steps.

\vspace{1mm}
\textbf{Step 1}: We first consider  $\phi\in L^\infty([0,T];\mathscr{U})$.
By Hypothesis \ref{hypo1} and the Lipschitz continuity of $\bar{\mathfrak{f}}$ (cf.~\cite[Remark 4.1]{HLL4}), it is clear that (\ref{skletonequation}) has a unique solution $\bar{X}^\phi\in C([0,T];\mathbb{H}_1)$, for any $\phi\in L^\infty([0,T];\mathscr{U})$ (cf.~\cite{liu2015stochastic}).

Now, for any $\phi\in L^2([0,T];\mathscr{U})$, we can always find a sequence $\phi^n\in L^\infty([0,T];\mathscr{U})$ such that
\begin{equation}\label{esphi}
\int_0^T\|\phi_t^n-\phi_t\|_\mathscr{U}^2dt\to0,~~n\to\infty.
\end{equation}
Let $\bar{X}^{\phi^n}$ denote the unique solution to (\ref{sk2}) with $\phi^n$ replacing $\phi$. Due to condition $({\mathbf{A}}{\mathbf{2}})$, Young's inequality and the Lipschitz continuity of $\mathfrak{f},$ for any $m,n\in\mathbb N$ we have
\begin{eqnarray*}
\!\!\!\!\!\!\!\!&&\frac d{dt}\|\bar{X}^{\phi^n}_t-\bar{X}^{\phi^m}_t\|_{\mathbb{H}_1}^2\\
=\!\!\!\!\!\!\!\!&&2\,_{\mathbb{V}_1^*}\big\langle \mathfrak{A}_1(\bar{X}^{\phi^n}_t)-\mathfrak{A}_1(\bar{X}^{\phi^m}_t),\bar{X}^{\phi^n}_t-\bar{X}^{\phi^m}_t\big\rangle_{\mathbb{V}_1}
+2\big\langle \bar{\mathfrak{f}}(\bar{X}^{\phi^n}_t)-\bar{\mathfrak{f}}(\bar{X}^{\phi^m}_t),\bar{X}^{\phi^n}_t-\bar{X}^{\phi^m}_t\big\rangle_{\mathbb{H}_1}\\
\!\!\!\!\!\!\!\!&&+2\big\langle \mathfrak{B}_1(\bar{X}^{\phi^n}_t)P_1\phi^n_t-\mathfrak{B}_1(\bar{X}^{\phi^m}_t)P_1\phi^m_t,\bar{X}^{\phi^n}_t-\bar{X}^{\phi^m}_t\big\rangle_{\mathbb{H}_1}\\
\leq\!\!\!\!\!\!\!\!&&2\,_{\mathbb{V}_1^*}\big\langle \mathfrak{A}_1(\bar{X}^{\phi^n}_t)-\mathfrak{A}_1(\bar{X}^{\phi^m}_t),\bar{X}^{\phi^n}_t-\bar{X}^{\phi^m}_t\big\rangle_{\mathbb{V}_1}+\|\bar{\mathfrak{f}}(\bar{X}^{\phi^n}_t)-\bar{\mathfrak{f}}(\bar{X}^{\phi^m}_t)\|_{\mathbb{H}_1}^2\\
\!\!\!\!\!\!\!\!&&+\|\bar{X}^{\phi^n}_t-\bar{X}^{\phi^m}_t\|_{\mathbb{H}_1}^2+2\big\langle \mathfrak{B}_1(\bar{X}^{\phi^n}_t)(\phi^n_t-\phi^m_t),\bar{X}^{\phi^n}_t-\bar{X}^{\phi^m}_t\big\rangle_{\mathbb{H}_1}\\
\!\!\!\!\!\!\!\!&&+2\big\langle \big[\mathfrak{B}_1(\bar{X}^{\phi^n}_t)-\mathfrak{B}_1(\bar{X}^{\phi^m}_t)\big]P_1\phi^m_t,\bar{X}^{\phi^n}_t-\bar{X}^{\phi^m}_t\big\rangle_{\mathbb{H}_1}\\
\leq\!\!\!\!\!\!\!\!&&2\,_{\mathbb{V}_1^*}\big\langle \mathfrak{A}_1(\bar{X}^{\phi^n}_t)-\mathfrak{A}_1(\bar{X}^{\phi^m}_t),\bar{X}^{\phi^n}_t-\bar{X}^{\phi^m}_t\big\rangle_{\mathbb{V}_1}+\|\mathfrak{B}_1(\bar{X}^{\phi^n}_t)-\mathfrak{B}_1(\bar{X}^{\phi^m}_t)\|_{L_2(\mathbb{U}_2,\mathbb{H}_2)}^2\\
\!\!\!\!\!\!\!\!&&+\|\bar{X}^{\phi^n}_t-\bar{X}^{\phi^m}_t\|_{\mathbb{H}_1}^2+\|\mathfrak{B}_1(\bar{X}^{\phi^n}_t)\|_{L_2(\mathbb{U}_2,\mathbb{H}_2)}^2\|\bar{X}^{\phi^n}_t-\bar{X}^{\phi^m}_t\|_{\mathbb{H}_1}^2\\
\!\!\!\!\!\!\!\!&&+\|\phi^n_t-\phi^m_t\|_\mathscr{U}^2+\|\phi^m_t\|_\mathscr{U}^2\|\bar{X}^{\phi^n}_t-\bar{X}^{\phi^m}_t\|_{\mathbb{H}_1}^2\\
\leq\!\!\!\!\!\!\!\!&&(C+\rho(\bar{X}^{\phi^n}_t)+\eta(\bar{X}^{\phi^m}_t)+\|\mathfrak{B}_1(\bar{X}^{\phi^n}_t)\|_{L_2(\mathbb{U}_2,\mathbb{H}_2)}^2+\|\phi^m_t\|_\mathscr{U}^2)\|\bar{X}^{\phi^n}_t-\bar{X}^{\phi^m}_t\|_{\mathbb{H}_1}^2+\|\phi^n_t-\phi^m_t\|_\mathscr{U}^2.
\end{eqnarray*}
Gronwall's lemma yields that
\begin{eqnarray}
\!\!\!\!\!\!\!\!&&\sup\limits_{t\in[0,T]}\|\bar{X}^{\phi^n}_t-\bar{X}^{\phi^m}_t\|_{\mathbb{H}_1}^2
\nonumber\\
\leq\!\!\!\!\!\!\!\!&& \text{exp}\left \{ \int_0^T\Big(C+\rho(\bar{X}^{\phi^n}_t)+\eta(\bar{X}^{\phi^m}_t)+\|\mathfrak{B}_1(\bar{X}^{\phi^n}_t)\|_{L_2(\mathbb{U}_2,\mathbb{H}_2)}^2+\|\phi^m_t\|_\mathscr{U}^2\Big)dt\right\}\nonumber\\
\!\!\!\!\!\!\!\!&& \cdot \int_0^T\|\phi^n_t-\phi^m_t\|_\mathscr{U}^2dt.\label{SkeSqeLipschitz}
\end{eqnarray}
Using the energy equality of $\bar{X}^{\phi^n}_t$, it follows that
\begin{eqnarray*}
\frac{d}{d t}\|\bar{X}_{t}^{\phi^{n}}\|_{\mathbb{H}_{1}}^{2}
\!\!\!\!\!\!\!\!&&\leq2_{\mathbb{V}_{1}^{*}}\langle \mathfrak{A}_1(\bar{X}_{t}^{\phi^{n}}), \bar{X}_{t}^{\phi^{n}}\rangle_{\mathbb{V}_{1}}+\|\mathfrak{B}_{1}(\bar{X}_{t}^{\phi^{n}})\|_{L_2(\mathbb{U}_2,\mathbb{H}_2)}^2+C(1+\|\phi_t^{n}\|_\mathscr{U}^2)\|\bar{X}_{t}^{\phi^{n}}\|_{\mathbb{H}_{1}}^{2}+C\\
\!\!\!\!\!\!\!\!&&\leq-\eta_{1}\|\bar{X}_{t}^{\phi^{n}}\|_{\mathbb{V}_{1}}^{\alpha_{1}}+C(1+\|\phi_{t}^{n}\|_\mathscr{U}^{2})\|\bar{X}_{t}^{\phi^{n}}\|_{\mathbb{H}_{1}}^{2}+C,
\end{eqnarray*}
where the last inequality is due to condition $({\mathbf{A}}{\mathbf{3}})$.
Then Gronwall's lemma implies that
\begin{equation}
\sup\limits_{t \in[0, T]}\|\bar{X}_{t}^{\phi^{n}}\|_{\mathbb{H}_{1}}^{2}+\eta_{1} \displaystyle \int_{0}^{T}\|\bar{X}_{t}^{\phi^{n}}\|_{\mathbb{V}_{1}}^{\alpha_{1}} d t
\leq C_T(1+\|x\|_{\mathbb{H}_{1}}^{2}) \exp \Bigg\{\displaystyle\int_{0}^{T}\|\phi_{t}^{n}\|_{U}^{2} d t\Bigg\}<\infty.\label{se1}
\end{equation}
Substituting (\ref{se1})  into (\ref{SkeSqeLipschitz}) and by conditions (\ref{conloc}) and $({\mathbf{A}}{\mathbf{5}})$ give that
\begin{equation*}
\displaystyle\sup\limits_{t\in[0,T]}\|\bar{X}^{\phi^n}_t-\bar{X}^{\phi^m}_t\|_{\mathbb{H}_1}^2\leq C_{T}\int_0^T\|\phi^n_t-\phi^m_t\|_\mathscr{U}^2dt.
\end{equation*}

In view of (\ref{esphi}), let $m,n\to\infty$, we obtain that $\{\bar{X}^{\phi^n}\}$ is a Cauchy net in $C([0,T];\mathbb{H}_1).$ Due to the completeness of $C([0,T];\mathbb{H}_1)$, $\{\bar{X}^{\phi^n}\}$ is a convergent sequence and  the limit is denoted by $X$.

\vspace{1mm}
\textbf{Step 2}: Next, we aim to prove that $X$ is the unique solution to (\ref{sk2}) with $\phi\in L^2([0,T];\mathscr{U})$ defined in (\ref{esphi}), i.e.,
\begin{equation}\label{e11}
X=\bar{X}^\phi.
\end{equation}

 By (\ref{se1}) and condition $({\mathbf{A}}{\mathbf{4}})$, $({\mathbf{A}}{\mathbf{5}})$, we can get
\begin{equation*}
\sup _{n \geq 1}\Bigg\{\int_{0}^{T}\|\mathfrak{A}_1(\bar{X}^{\phi^n}_t)\|_{\mathbb{V}_1^{*}}^{\frac{\alpha_1}{\alpha_1-1}} d t+\int_{0}^{T}\|\mathfrak{B}_1(\bar{X}^{\phi^n}_t)\|_{L_{2}(\mathbb{U}_1, \mathbb{H}_{1})}^{2} d t\Bigg\}<\infty.
\end{equation*}
Then, there exist $\mathscr{A}(\cdot) \in L^{\frac{\alpha_1}{\alpha_1-1}}([0, T], \mathbb{V}_1^{*})$  such that
\begin{eqnarray*}
\!\!\!\!\!\!\!\!&&\bar{X}^{\phi^{n}}_{\cdot} \to X_{\cdot}~~\text {in }~~ \mathbb{S}_T;\\
\!\!\!\!\!\!\!\!&&\bar{X}^{\phi^{n}}_{\cdot} \rightharpoonup X_{\cdot}~~\text {in }~~ L^{\alpha_{1}}([0, T] ; \mathbb{V}_{1});\\
\!\!\!\!\!\!\!\!&&\mathfrak{A}_1(\bar{X}^{\phi^{n}}_{\cdot}) \rightharpoonup \mathscr{A}(\cdot)~~\text { in }~~  L^{\frac{\alpha_1}{\alpha_1-1}}([0, T], \mathbb{V}_1^{*}),
\end{eqnarray*}
as $n \to \infty.$
Denote
$$\widehat{X}_{t}^{\phi} := x+\int_{0}^{t} \mathscr{A}(s) d s+\int_{0}^{t}\bar{\mathfrak{f}}(\bar{X}_{s}^{\phi})ds+\int_{0}^{t} \mathfrak{B}_1(\bar{X}_s^{\phi})P_1 \phi_s d s,$$
then it is straightforward that $\widehat{X}^{\phi}=X,~dt\text{-a.e.}$. It suffices to prove that
\begin{equation*}
\mathscr{A}(\cdot)=\mathfrak{A}_1(\bar{X}_\cdot^{\phi}),
\end{equation*}
which follows from the same argument as the proof of (\ref{A11}).

In conclusion, $X$ is a solution to (\ref{sk2}).
The uniqueness of solutions to (\ref{sk2}) is a direct
consequence of condition $({\mathbf{A}}{\mathbf{2}})$. Thus (\ref{e11}) holds. Moreover, the estimate (\ref{seletonestimate}) follows from (\ref{se1}) for any $\phi \in S_M$. We complete the proof.
\end{proof}

Finally, we present a time increment estimate of solutions to the skeleton equation. Because of very similar argument as the proof of Lemma \ref{condiff}, we omit the details here.
\begin{lemma}\label{MonoSkePrioLemm}
For any $T>0,\phi\in S_M,M<\infty$, $x\in \mathbb{H}_1$, there is a constant $C_{T,M}>0$ such that for any $\zeta>0$ small enough we have
\begin{equation}
\sup_{\phi\in S_M}\int_0^T\|\bar{X}^\phi_t-\bar{X}^\phi_{t(\zeta)}\|_{\mathbb{H}_1}^2dt\leq C_{x,T,M}\zeta.
\end{equation}
\end{lemma}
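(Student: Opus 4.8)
The plan is to repeat, with the obvious simplifications, the argument used for Lemma~\ref{condiff}; the proof is in fact easier because the skeleton equation~(\ref{sk2}) is deterministic, so there is no stochastic integral, and there is no fast component to handle. First I would split
\begin{equation*}
\int_0^T\|\bar{X}^\phi_t-\bar{X}^\phi_{t(\zeta)}\|_{\mathbb{H}_1}^2\,dt
=\int_0^\zeta\|\bar{X}^\phi_t-x\|_{\mathbb{H}_1}^2\,dt+\int_\zeta^T\|\bar{X}^\phi_t-\bar{X}^\phi_{t(\zeta)}\|_{\mathbb{H}_1}^2\,dt,
\end{equation*}
bound the first integral by $C(1+\|x\|_{\mathbb{H}_1}^2)\zeta$ using the uniform estimate~(\ref{seletonestimate}), and control the second one, exactly as in~(\ref{controllDifferent}), by $2\int_\zeta^T\|\bar{X}^\phi_t-\bar{X}^\phi_{t-\zeta}\|_{\mathbb{H}_1}^2\,dt+2\int_\zeta^T\|\bar{X}^\phi_{t(\zeta)}-\bar{X}^\phi_{t-\zeta}\|_{\mathbb{H}_1}^2\,dt$. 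Both remaining integrals are of the same type, so it is enough to estimate $\int_\zeta^T\|\bar{X}^\phi_t-\bar{X}^\phi_{t-\zeta}\|_{\mathbb{H}_1}^2\,dt$.

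For the latter I would use the energy equality for~(\ref{sk2}) on the interval $[t-\zeta,t]$,
\begin{eqnarray*}
\|\bar{X}^\phi_t-\bar{X}^\phi_{t-\zeta}\|_{\mathbb{H}_1}^2
&=&2\int_{t-\zeta}^t{}_{\mathbb{V}_1^*}\langle\mathfrak{A}_1(\bar{X}^\phi_s),\bar{X}^\phi_s-\bar{X}^\phi_{t-\zeta}\rangle_{\mathbb{V}_1}\,ds\\
&&+2\int_{t-\zeta}^t\langle\bar{\mathfrak{f}}(\bar{X}^\phi_s),\bar{X}^\phi_s-\bar{X}^\phi_{t-\zeta}\rangle_{\mathbb{H}_1}\,ds\\
&&+2\int_{t-\zeta}^t\langle\mathfrak{B}_1(\bar{X}^\phi_s)P_1\phi_s,\bar{X}^\phi_s-\bar{X}^\phi_{t-\zeta}\rangle_{\mathbb{H}_1}\,ds,
\end{eqnarray*}
then integrate over $t\in[\zeta,T]$ and apply Fubini. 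The first term is treated by H\"older's inequality with exponents $\frac{\alpha_1}{\alpha_1-1}$ and $\alpha_1$, condition~$({\mathbf{A}}{\mathbf{4}})$, the elementary inequalities $\int_\zeta^T\int_{t-\zeta}^t\|\mathfrak{A}_1(\bar{X}^\phi_s)\|_{\mathbb{V}_1^*}^{\frac{\alpha_1}{\alpha_1-1}}\,ds\,dt\le\zeta\int_0^T\|\mathfrak{A}_1(\bar{X}^\phi_t)\|_{\mathbb{V}_1^*}^{\frac{\alpha_1}{\alpha_1-1}}\,dt$ and $\int_\zeta^T\int_{t-\zeta}^t\|\bar{X}^\phi_s-\bar{X}^\phi_{t-\zeta}\|_{\mathbb{V}_1}^{\alpha_1}\,ds\,dt\le C\zeta\int_0^T\|\bar{X}^\phi_t\|_{\mathbb{V}_1}^{\alpha_1}\,dt$, and finally~(\ref{seletonestimate}), the factor $(1+\|\bar{X}^\phi_t\|_{\mathbb{H}_1}^{\beta_1})$ being absorbed through $\sup_{t\in[0,T]}\|\bar{X}^\phi_t\|_{\mathbb{H}_1}$; this reproduces the bound of~(\ref{condiff1}). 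The second term is estimated by Cauchy--Schwarz together with the linear growth of $\bar{\mathfrak{f}}$ (established in Step~1 of the proof of Lemma~\ref{skletonproof}), yielding $C_T\zeta(1+\|x\|_{\mathbb{H}_1}^2)$ as in~(\ref{condiff2}). The third term is estimated by Cauchy--Schwarz, condition~$({\mathbf{A}}{\mathbf{5}})$ on $\mathfrak{B}_1$ and $\phi\in S_M$, yielding $C_{T,M}\zeta(1+\|x\|_{\mathbb{H}_1}^2)$ as in~(\ref{condiff3})---and here, in contrast with the proof of Lemma~\ref{condiff}, there is no martingale term $I_5$ to be discarded.

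Collecting the three bounds gives $\int_\zeta^T\|\bar{X}^\phi_t-\bar{X}^\phi_{t-\zeta}\|_{\mathbb{H}_1}^2\,dt\le C_{x,T,M}\zeta$ with a constant depending only on $x$, $T$ and $M$, and the same for the companion integral; substituting back yields $\sup_{\phi\in S_M}\int_0^T\|\bar{X}^\phi_t-\bar{X}^\phi_{t(\zeta)}\|_{\mathbb{H}_1}^2\,dt\le C_{x,T,M}\zeta$, the supremum being harmless since all constants are already uniform in $\phi\in S_M$. There is no genuine obstacle in this argument; the only point that needs a little care---precisely as in Lemma~\ref{condiff}---is the bookkeeping of the $\mathfrak{A}_1$ term, so that the exponent of $\|x\|_{\mathbb{H}_1}$ in the final constant is correctly tracked through~$({\mathbf{A}}{\mathbf{4}})$ and~(\ref{seletonestimate}). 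This is why the details are omitted.
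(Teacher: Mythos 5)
Your proposal is correct and is precisely the argument the paper intends: the paper omits the proof of this lemma with the remark that it is "very similar" to that of Lemma \ref{condiff}, and your write-up is exactly that adaptation (energy equality for the skeleton equation, H\"older plus $({\mathbf{A}}{\mathbf{4}})$, $({\mathbf{A}}{\mathbf{5}})$ and the Lipschitz/linear growth of $\bar{\mathfrak{f}}$, combined with the uniform bound (\ref{seletonestimate})), simplified by the absence of the martingale term and of the fast variable. The uniformity in $\phi\in S_M$ indeed comes for free from the $\phi$-uniform constant in (\ref{seletonestimate}), so no further comment is needed.
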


\section*{Statements and Declarations}
\noindent\textbf{Data availability:} Data sharing not applicable to this article as no datasets were generated or analysed during the current study.

\noindent\textbf{Conflict of interest:}  On behalf of all authors, the corresponding author states that there is no conflict of interest.

\end{document}